\newtheorem{definition}{Definition}[section]
\newtheorem{theorem}{Theorem}[section] 
\newtheorem{lemma}[theorem]{Lemma}
\newtheorem{remark}[theorem]{Remark}
\newtheorem{proposition}[theorem]{Proposition}
\newtheorem{corollary}[theorem]{Corollary}
\newtheorem{example}[theorem]{Example}
\newcommand{\C}{\mathbb{C}}
\newcommand{\R}{\mathbb{R}}
\newcommand{\p}{\partial}
\newcommand{\Hess}{{\rm Hess}}
\newcommand{\fa}{\mathfrak{a}}
\newcommand{\fg}{\mathfrak{g}}
\newcommand{\fk}{\mathfrak{k}}
\newcommand{\fm}{\mathfrak{m}}
\newcommand{\fo}{\mathfrak{o}}
\newcommand{\fp}{\mathfrak{p}}
\newcommand{\fu}{\mathfrak{u}}
\newcommand{\fs}{\mathfrak{s}}
\begin{document}

\title[Nonexistence of stable discrete maps]{Nonexistence of stable discrete maps into some homogeneous spaces of nonnegative curvature}
%\tnotetext[mytitlenote]{Fully documented templates are available in the elsarticle package on \href{http://www.ctan.org/tex-archive/macros/latex/contrib/elsarticle}{CTAN}.}
\dedicatory{Dedicated to Professor Yoshihiro Ohnita on the occasion of his 65th birthday} 

%% Group authors per affiliation:
\author[T.Kajigaya]{Toru Kajigaya}%\fnref{myfootnote}}
\address{Department of Mathematics, Faculty of Science, Tokyo University of Science
1-3 Kagurazaka Shinjuku-ku, Tokyo 162-8601 JAPAN\endgraf
National Institute of Advanced Industrial Science and Technology (AIST), MathAM-OIL, Sendai 980-8577 
JAPAN
}

%\fntext[myfootnote]{}

%% or include affiliations in footnotes:
%\author[mymainaddress,mysecondaryaddress]{Elsevier Inc}
%\ead[url]{www.elsevier.com}

%\author[mysecondaryaddress]{Global Customer Service\corref{mycorrespondingauthor}}
%\cortext[mycorrespondingauthor]{Corresponding author}
\email{kajigaya@rs.tus.ac.jp}

%\address[mymainaddress]{1600 John F Kennedy Boulevard, Philadelphia}
%\address[mysecondaryaddress]{360 Park Avenue South, New York}

\subjclass[2020]{58E20, 05C22, 53C30, 53C35}
\date{\today}
\keywords{}

\begin{abstract}
We consider stabilities for the weighted length or energy functional of a discrete map from a finite weighted graph $(X,m_{E})$  into a smooth Riemannian manifold $(M,g)$.
We prove the non-existence of a stable discrete minimal immersion or a non-constant stable discrete harmonic map from a finite weighted graph into certain homogeneous spaces, such as K\"ahler $C$-spaces of positive holomorphic sectional curvature and some simply-connected compact Riemannian symmetric spaces.
 \end{abstract}

\maketitle

\section{Introduction}

Let $(X,m_E)$ be a finite weighted graph with a positive valued weight function $m_{E}:E\to \R_{>0}$ on the set of (oriented) edges $E$ and $(M,g)$  a smooth closed Riemannian manifold with a Riemannian metric $g$. We consider a piecewise smooth map $f:X\to M$ which realizes a configuration of the graph $X$ in the space $M$, such as a triangulation of surface and a net in a higher dimensional space.   
We define the (weighted) {\it length} ${\bf L}$ and the {\it energy} ${\bf E}$ of $f: (X,m_{E})\to (M,g)$ by
\[
{\bf L}(f):=\sum_{e\in E}m_E(e)\int_0^1\|\dot {f}_e\|_g\,dt,\quad {\bf E}(f):=\frac{1}{2}\sum_{e\in E}m_E(e)\int_0^1\|\dot {f}_e\|_g^2\,dt,
\] 
where $\dot{f}_e$ is the derivative of the restricted map $f_e=f|_e$ for each edge $e\in E$. These functionals are natural generalizations of the length or energy of a closed curve, and a map that appears as a critical point of each functional is regarded as a generalization of the closed geodesic. 

  A critical point of the energy functional ${\bf E}$ is called {\it discrete harmonic map}, and it was introduced by Colin de Verdi\`ere \cite{CdV}  to consider a triangulation of surface consisting of geodesic edges. Kotani-Sunada \cite{KS} studied a special type of discrete harmonic map into a flat torus of arbitrary dimension, known as the standard realization, which provides a periodic realization of a graph  in the Euclidean space $\R^{n}$ with large symmetry. Recently, Lam \cite{Lam} pointed out that the standard realization in a 2-dimensional flat torus is actually obtained by a weighted Delaunay decomposition of the flat torus. Moreover, the existence of a hyperbolic version of the standard realization of a graph in a closed Riemann surface of arbitrary genus has been proved by the author and Tanaka \cite{KT} under a mild assumption of the discrete map. 
Also, a geodesic immersion of a graph with uniform weight function that attains a critical point of ${\bf L}$ is referred to as {\it stationary (or minimal) geodesic net} and studied by many researchers. For instance, various existence results of stationary geodesic nets have been established by Nabutovsky-Rotman \cite{NR} and Liokumovich-Staffa \cite{LiS}.  In this paper, more generally, we shall call an immersion\footnote{A map $f:X\to M$ is an {\it immersion} if $\dot f_{e}\neq 0$ along each $e\in E$. This assumption is necessary to derive a useful characterization of critical point of ${\bf L}$ (see Proposition \ref{prop:pharm}). On the other hand, a discrete harmonic map (i.e. a critical point of ${\bf E}$) is not necessarily an immersion.} $f: (X,m_{E})\to (M,g)$ that is a critical point of ${\bf L}$ {\it discrete minimal immersion}. 
Note that a critical point of ${\bf L}$  is not necessarily a critical point of ${\bf E}$ as long as we fix the weight function $m_E$. However, they are closely related  if we permit a change of weight function (see Section \ref{sec:pre}).

 It is known that if the sectional curvature of $(M,g)$ is non-positive, any discrete harmonic map into $M$ minimizes the energy in its homotopy class (see \cite{CdV, KS}). Thus, we obtain a non-trivial energy minimizing discrete map into such $M$ whenever the map is not homotopic to a point.  For instance, this is the case of previous works \cite{CdV, KS, KT, Lam} in flat tori or hyperbolic surfaces. 
However, the situation is different if the sectional curvature of $(M,g)$ is not always non-positive. For example,  the theorem of Synge implies that, if $M$ is an even-dimensional oriented closed Riemannian manifold with positive sectional curvature, then there is no non-trivial length/energy minimizing closed geodesic in $M$, and in particular, $M$ is simply-connected since a minimizing closed geodesic always exists if $M$ is not simply-connected (indeed, each non-trivial homotopy class in $\pi_{1}(M)$ contains such a  geodesic by the classical result).

When the target manifold $M$ is simply-connected, any continuous map $f:X\to M$ from a graph $X$ is homotopic to a point, and therefore, there is no non-trivial global minimizer of the functional ${\bf F}={\bf L}$ or ${\bf E}$ in the homotopy class of $f$. However, it is still an interesting problem to investigate whether a critical point attains a local minimum of ${\bf F}$ or not.  
 We say a critical point $f$  {\it stable} if $d^2{\bf F}(f_s)/ds^2|_{s=0}\geq 0$ for any piecewise smooth deformation $\{f_s\}_s$ of $f=f_0$. Obviously, any local minimizer must be a stable map.  
 
 The proof of Synge's theorem actually shows that  there is no stable closed geodesic if $M$ is even-dimensional, oriented and has  positive sectional curvature. However, this result is not necessarily true for an odd-dimensional space. Indeed, some concrete examples of stable closed geodesic are given  by Ziller \cite{Zill1}. For example, he showed that the closed geodesic in the direction of the Reeb vector field in a $3$-dimensional Berger sphere $S^{3}$ is stable  (see \cite[Example 1]{Zill1}). Moreover,  a generalization of his example in a higher dimensional Berger sphere $S^{2n+1}$ is given by Torralbo-Urbano (see \cite[Proposition 6]{TU}). Note that the Berger sphere is a Riemannian homogeneous space of strictly positive sectional curvature. Thus, this provides a simple example of stable (discrete) minimal immersion or harmonic map into a simply-connected positively curved homogeneous space. 
 
Furthermore, Cheng \cite{Ch} recently proved that there exists a convex hypersurface $M^{n}$ in $\R^{n+1}$ $(n\geq 2)$ that contains a stable stationary geodesic net consisting of a bouquet graph without any closed geodesic.  He also states that $M$ is homeomorphic to $S^{n}$ and the metric has positive sectional curvature. Note that, by Proposition \ref{prop:key} in the present paper, this also implies the existence of stable discrete harmonic immersion from a weighted graph into such $M$.  In particular, Cheng's result shows that the non-existence of stable discrete maps does not hold in general even if $M$ satisfies the same assumption of Synge's theorem.
 
Therefore,  it is a natural question to ask whether a stable discrete minimal immersion or a stable discrete harmonic map exists in a given simply-connected Riemannian manifold $M$ of positive or non-negative curvature. 
In the present paper, we prove the non-existence of stable maps for certain compact Riemannian homogeneous spaces with an extra geometric structure.  
Our first main result is as follows.

\begin{theorem}\label{thm:main1}
Let $(M,J,g)$ be a simply-connected compact homogeneous K\"ahler manifold of positive holomorphic sectional curvature.   Then there do not exist any
 stable discrete minimal  immersion or a non-constant stable discrete harmonic map from a finite weighted graph $(X, m_{E})$ into $(M,g)$. 
\end{theorem}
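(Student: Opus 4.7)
The plan is to adapt the classical Lawson--Simons/Ohnita averaging trick to the discrete setting: construct a family of destabilizing variations from Killing fields of the homogeneous target and show that the total contribution to the second variation is strictly negative when the holomorphic sectional curvature is positive.

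First I would derive the second variation of $\mathbf{E}$ at a discrete harmonic map (resp. of $\mathbf{L}$ at a discrete minimal immersion) $f$. For a variation $\{f_{s}\}$ with variation vector field $V$ continuous across vertices, integration by parts on each edge yields
\[
I(V,V) \;:=\; \frac{d^{2}\mathbf{E}(f_{s})}{ds^{2}}\bigg|_{s=0} \;=\; \sum_{e\in E} m_{E}(e)\int_{0}^{1}\!\Bigl(\|\nabla_{\dot f_{e}} V\|_{g}^{2} - \langle R^{M}(V,\dot f_{e})\dot f_{e}, V\rangle_{g}\Bigr)\,dt,
\]
with boundary contributions at each vertex of $X$ cancelling thanks to the discrete harmonic balance condition. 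An analogous formula holds for $\mathbf{L}$ with $V$ replaced by its $\dot f_{e}$-perpendicular component.

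Let $G$ denote the identity component of the isometry group of $(M,g)$. Because $M$ is a homogeneous K\"ahler manifold, $G$ preserves $J$, so every $X\in\mathfrak{g}$ is a holomorphic Killing field. Fix an $\mathrm{Ad}(G)$-invariant inner product on $\mathfrak{g}$ with orthonormal basis $\{X_{a}\}$ and set $V_{a}:=(JX_{a})\circ f$; each $V_{a}$ is a section of $f^{*}TM$ (automatically continuous across vertices) and hence an admissible variation. Since $\nabla J=0$, we have $\nabla_{\dot f_{e}}V_{a} = J(\nabla_{\dot f_{e}}X_{a})$ and $\|\nabla_{\dot f_{e}}V_{a}\|^{2}=\|\nabla_{\dot f_{e}}X_{a}\|^{2}$. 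Summing $I(V_{a},V_{a})$ over $a$ and writing $u=\dot f_{e}(t)$,
\[
\sum_{a} I(V_{a},V_{a}) \;=\; \sum_{e}m_{E}(e)\int_{0}^{1}\!\Bigl(\sum_{a}\|\nabla_{u}X_{a}\|^{2} - \sum_{a}\langle R^{M}(JX_{a},u)u, JX_{a}\rangle\Bigr)\,dt.
\]
Both inner sums are $G$-invariant functions of $u$ and so by homogeneity reduce to universal curvature invariants of $u\in TM$. Using the Kostant identity $(\nabla^{2}X)(Y,Z) = -R^{M}(X,Y)Z$ for Killing fields together with the K\"ahler symmetry $R^{M}(JY,JZ)=R^{M}(Y,Z)$, I would identify the integrand as a positive multiple of $-H(u)\|u\|^{4}$, where $H$ denotes the holomorphic sectional curvature.

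Since $H>0$, the averaged second variation is strictly negative whenever $\dot f_{e}\not\equiv 0$ on some edge---automatic for an immersion, and guaranteed by the non-constancy hypothesis in the harmonic case---so at least one $V_{a}$ satisfies $I(V_{a},V_{a})<0$, contradicting stability. The main obstacle is the identification in the preceding step: showing that the derivative contributions $\sum_{a}\|\nabla_{u}X_{a}\|^{2}$ (which a priori encode Ricci-type information via Kostant's formula) combine with the curvature contributions to leave precisely the holomorphic sectional curvature, rather than some mixture of intermediate curvatures. This is where the full force of the homogeneous K\"ahler structure is brought to bear; the discrete setting itself adds no new conceptual difficulty once the second variation formula and the choice of test variations are in place.
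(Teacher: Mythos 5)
Your overall strategy---averaging the second variation over the $J$-rotated Killing fields $JX_a$---is the same as the paper's, but the step you yourself flag as the main obstacle is exactly where the argument breaks: the averaged second variation is not strictly negative, it is identically zero. Setting $q(V):=\Hess_f(JV^*,JV^*)$, the Kostant identity gives $\nabla_{T_e}\nabla_{T_e}(JV^*)=-JR(V^*,T_e)T_e$, so the pointwise integrand is
\[
q_w(V)=g(R(V^*,T_e)T_e,V^*)_w-g(R(JV^*,T_e)T_e,JV^*)_w .
\]
Because $J_w$ is skew-symmetric with respect to any ${\rm Ad}(G)$-invariant inner product on $\fg$, one can choose an orthonormal basis of $\fg$ of the form $\{Z_i\}\cup\{X_i\}\cup\{J_wX_i\}$ with $Z_i\in\fk_w$; then $q_w(Z_i)=0$ and $q_w(J_wX_i)=-q_w(X_i)$, so ${\rm Tr}\,q_w=0$. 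No curvature quantity, holomorphic sectional or otherwise, survives the average, and the conclusion ``at least one $V_a$ satisfies $I(V_a,V_a)<0$'' does not follow from the trace alone.

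The proof therefore has to extract more from stability than the sign of the trace. Since ${\rm Tr}\,q=0$ and stability forces each $q(V)\geq 0$, every $q(V)$ vanishes; positive semi-definiteness of $\Hess_f$ together with Cauchy--Schwarz then yields $\Hess_f(JV^*,W)=0$ for all $W\in\Gamma(f^{-1}TM)$, and localizing $W$ on a single edge gives the pointwise identity $JR(V^*_e,T_e)T_e=R(JV^*_e,T_e)T_e$ along every edge. Only at this stage does the hypothesis enter: choosing $V\in\fg$ with $V^*(w)=T_e(w)$ (possible by homogeneity) gives $R(JT_e,T_e)T_e=0$, which contradicts positive holomorphic sectional curvature unless $T_e\equiv 0$. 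Two further points your sketch leaves open: the vanishing of the vertex boundary terms needed to write $\Hess_f(JV^*,\cdot)$ in Jacobi-operator form requires showing $f^{-1}(JV^*)\in\Gamma_{bal}(f^{-1}TM)$ (Killing field plus $\nabla J=0$ plus the balanced condition), and the minimal-immersion case is not handled by running the averaging on ${\bf L}$ directly but by the reduction of Proposition \ref{prop:key}, which shows that a stable minimal geodesic immersion would produce a stable non-constant $2$-harmonic immersion.
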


A typical example of K\"ahler manifold satisfying the assumption of Theorem \ref{thm:main1} is given by a Hermitian symmetric space of compact type. Moreover, a simply-connected compact homogeneous K\"ahler manifold is also known as a {\it K\"ahler $C$-space} (or a {\it generalized flag manifold}), and it is conjectured that any K\"ahler $C$-space has positive holomorphic sectional curvature (cf. \cite{NZ}). In fact, besides Hermitian symmetric spaces, this conjecture is confirmed by Itoh \cite{Itoh} and Lohove \cite{Loh} for some K\"ahler $C$-spaces with small second Betti number.

Next, we consider a simply-connected compact Riemannian symmetric space (RSS for short) as a target manifold $(M,g)$. Note that the sectional curvature of a compact RSS is non-negative.  In the present paper, we show the following result.

\begin{theorem}\label{thm:main2}
Let $(M,g)$ be a simply-connected, compact irreducible Riemannian symmetric space.
\begin{enumerate}
\item If $(M,g)$ is either a strongly unstable RSS, a Hermitian symmetric space, or satisfies that ${\rm rank}M\leq 3$ and $M\neq SO(q+3)/SO(q)\times SO(3)$ of $q\geq 4$, then there is no stable  discrete minimal immersion from a finite weighted graph $(X, m_{E})$ into $(M,g)$. 
\item If $(M,g)$ is either a strongly unstable RSS, a Hermitian symmetric space or the exceptional Lie group $G_{2}$, then there is no non-contant stable discrete harmonic map from a finite weighted graph $(X, m_{E})$ into $(M,g)$. 
\end{enumerate}
\end{theorem}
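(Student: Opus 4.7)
The plan is to exhibit, for each discrete harmonic map (respectively, discrete minimal immersion) $f:(X,m_{E})\to(M,g)$, a piecewise smooth deformation $\{f_{s}\}$ of $f=f_{0}$ with $d^{2}{\bf E}(f_{s})/ds^{2}|_{s=0}<0$ (resp.\ the same for ${\bf L}$). The destabilizing deformations will be produced from ambient vector fields on $M$ pulled back along $f$: for $V\in\Gamma(TM)$ the composition $V\circ f$ defines a variation vector field on every edge, and, using the balancing condition satisfied by a discrete harmonic map, the second variation of ${\bf E}$ splits into a sum of edgewise integrals
\[
\delta^{2}{\bf E}(V)=\sum_{e\in E}m_{E}(e)\int_{0}^{1}\bigl(\|\nabla_{\dot f_{e}}V\|^{2}-\langle R(V,\dot f_{e})\dot f_{e},V\rangle\bigr)\,dt,
\]
the vertex terms cancelling because $V$ is single-valued at vertices. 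The corresponding formula for ${\bf L}$ at a discrete minimal immersion is similar, with $V$ replaced by its component normal to $\dot f_{e}$.

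Given this edgewise expression, the general strategy is a trace argument: produce a finite family of vector fields $\{V_{i}\}$ on $M$ so that, for every unit tangent vector $X\in TM$,
\[
\sum_{i}\bigl(\|\nabla_{X}V_{i}\|^{2}-\langle R(V_{i},X)X,V_{i}\rangle\bigr)<0.
\]
Integrating edgewise and summing then yields $\sum_{i}\delta^{2}{\bf E}(V_{i})<0$, so at least one $V_{i}$ destabilizes $f$. When $(M,g)$ is a strongly unstable compact RSS, the required family is standard: it is obtained from the tangential projections of the coordinate vector fields on $\R^{N}$ under the equivariant minimal embedding $M\hookrightarrow\R^{N}$, and the negativity of the trace is precisely the strong instability condition.

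The Hermitian symmetric case reduces to Theorem \ref{thm:main1}, since any simply-connected irreducible compact Hermitian symmetric space is a K\"ahler $C$-space of positive holomorphic sectional curvature. For the rank $\leq 3$ assertion in (i) and the $G_{2}$ assertion in (ii), the argument proceeds by enumeration: invoking the classification of strongly unstable symmetric spaces, I would identify the finite list of simply-connected irreducible compact RSS of rank at most three (respectively, the single case $G_{2}$) that are neither SU nor Hermitian symmetric, and construct for each a suitable family $\{V_{i}\}$ for which the pointwise trace inequality holds. The case $SO(q+3)/SO(q)\times SO(3)$ of (i) is deliberately excluded because the natural candidate families fail this estimate, illustrating the limit of the method.

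The main obstacle is this case-by-case construction. For each remaining low-rank symmetric space the appropriate $\{V_{i}\}$ is a carefully chosen subfamily of ambient-projection or Killing-related fields, and verifying the trace inequality reduces to a quantitative comparison between sectional curvatures and squared second fundamental forms that must be checked separately for every space in the list, with an adjustment between the minimal-immersion (length) and harmonic-map (energy) settings because of the extra normal projection in the ${\bf L}$ formula; this is what forces the stricter list of admissible spaces in (i) relative to (ii). The discrete-to-smooth step is by contrast routine: the curvature integrand is identical to the smooth case edgewise, the vertex contributions vanish under the critical point equation, and positivity of the weights $m_{E}(e)$ transfers pointwise negativity of the trace directly to $\delta^{2}{\bf E}(f)$ and $\delta^{2}{\bf L}(f)$.
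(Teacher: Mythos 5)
Your overall framework --- the Lawson--Simons/Howard--Wei averaging method applied edgewise, with variation fields obtained as tangential projections $v^{\top_F}$ of constant vector fields under an equivariant isometric immersion $F:M\to\R^{N}$, and with the vertex contributions killed by the balanced condition --- is exactly the paper's framework (Theorem \ref{thm:trf}, Corollary \ref{cor:cri1}), and your treatment of the strongly unstable case and the reduction of the Hermitian symmetric case to Theorem \ref{thm:main1} are both correct. The gap is in the borderline cases, which are precisely the cases that make the theorem nontrivial. Your plan rests on producing, for each remaining space, a family $\{V_i\}$ with \emph{strictly} negative pointwise trace. But for every space in the statement that is not strongly unstable (the Hermitian symmetric spaces, $G_2$, and the low-rank spaces $SU(3)/SO(3)$, $SU(4)/SO(4)$, $Spin(7)$, $G_2/SO(4)$), no such family exists: for the first standard immersion the trace equals $\sum_e m_E(e)\int(\lambda_1-2c)\|T_e\|^2\,dt$ up to the second-fundamental-form correction, and these spaces all have $\lambda_1\geq 2c$. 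The paper must therefore (a) exploit the extra $-\|B(\widetilde T_e,\widetilde T_e)\|^2$ term that appears only in the $p=1$ trace formula, and (b) handle the case ${\rm Tr}\,q_F^p=0$ by a genuinely different argument (Lemma \ref{lem:cri2}): stability plus vanishing trace forces ${\rm Hess}(v^{\top_F},\cdot)=0$ for \emph{all} $v$, which yields pointwise identities ($K(N\wedge\widetilde T_e)=\|B(N,\widetilde T_e)\|^2$ for $p=1$, and $B(T_e,T_e)=0$ for $p\geq 2$); these are then contradicted either by the existence of flat $2$-planes through every tangent direction (rank $\geq 2$) or by the fact that a sphere contains no line segment. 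None of this is a ``routine verification of the trace inequality,'' and it cannot be subsumed under your stated strategy.

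Moreover, for $M=G_2/SO(4)$ even the refined first-standard-immersion criterion fails ($\lambda_1>n/(n-1)$), and the paper has to construct a different equivariant immersion --- realizing $G_2/SO(4)$ inside the symmetric $R$-space $G_3(\R^7)\subset\mathfrak{p}$ for $SL(7,\R)/SO(7)$ --- prove that its second fundamental form is isotropic, and compute $\beta_\Phi=1/14$ explicitly to verify ${\rm Ric}>\tfrac12(n/r^2-\beta_\Phi)g$; this occupies all of Section \ref{sec:g2} and is an essential, not optional, part of the proof of (i). Finally, a small but telling slip: you say the normal projection in the ${\bf L}$-Hessian ``forces the stricter list of admissible spaces in (i) relative to (ii),'' but the implication goes the other way --- dropping the positive tangential term makes instability \emph{easier} for the length functional (this is Proposition \ref{prop:key}: $({\bf N}_p)\Rightarrow({\bf N}_1)$), which is why the list in (i) is strictly larger than in (ii).
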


Here, a compact RSS $M$ is said to be {\it strongly unstable} if there is no non-constant stable {\it smooth} harmonic map from any compact Riemannian manifold $(N,h)$ into $(M,g)$. Note that there are several equivalent conditions for the strongly instability (see \cite[Theorem 5.3]{HW} and \cite[Theorem 4]{Ohnita}). For example, $M$ is strongly unstable if and only if the identity map $id: (M,g)\to (M,g)$ is not a stable harmonic map.
There is a complete classification of strongly unstable, simply-connected compact irreducible RSSs due to Howord-Wei \cite{HW} and Ohnita \cite{Ohnita} (see Table \ref{list1} in Section \ref{subsec:RSS}).

We conjecture that the non-existence of stable discrete minimal immersion/harmonic map holds for any simply-connected compact RSS. However, there are some difficulties in our method when $M$ is not the space given in Theorem \ref{thm:main2} and these cases are remaining problems. See Section \ref{subsec:RSS} for details and further discussion.

Our proof  is based on the averaging method, which was first used by Lawson-Simons \cite{LS} to prove the non-existence of stable varifolds in the standard sphere. Burns-Burstall-Bartolomeis-Rawnsley \cite{BBBR} and Ohnita-Udagawa \cite{OhU} generalized this method to harmonic maps from a Riemann surface into a compact Hermitian symmetric space, and  if this is the case, they proved that the stable harmonic map must be a holomorphic or anti-holomorphic map. Howard-Wei \cite{HW} and Ohnita \cite{Ohnita} applied the method to a Riemannian manifold isometrically immersed in the Euclidean space, and gave a complete classification of strongly unstable simply-connected compact irreducible RSS. Ohnita \cite{Ohnita3} also used this method to prove the non-existence of stable rectifiable $p$-current of $p$ less than the dimension of the Helgason sphere in some compact RSSs. 

In this paper, we show that their methods are  valid even for  discrete maps.   In fact, Theorem \ref{thm:main1} is proved by applying the method used in \cite{BBBR, OhU} to the energy functional ${\bf E}$. Furthermore, if $M$ is isometrically immersed into the Euclidean space, we obtain some criteria for the non-existence by improving the method given in \cite{HW, Ohnita} (Corollary \ref{cor:cri1} and  Proposition \ref{prop:keyN}).  Theorem \ref{thm:main2} is proved by showing these criteria for an equivariant isometric immersion of symmetric space. However, our proof of Theorem \ref{thm:main2} is divided into several cases and requires a detailed computation for an exceptional case. More precisely,  we construct an explicit isometric immersion of the rank $2$ exceptional compact symmetric space $G_{2}/SO(4)$  in order to prove the non-existence of discrete minimal immersion into $G_{2}/SO(4)$ (See Section \ref{sec:g2} for details).

The paper is organized as follows. In Section \ref{sec:pre}, we summarize basic facts on discrete minimal immersion and discrete ($p$-)harmonic map. In particular, we give a useful characterization and mention a relationship between these maps.  In Section \ref{sec:nonex1}, we give a proof of Theorem \ref{thm:main1}. In Section \ref{sec:nonex2}, we prove criteria for the non-existence when $M$ is isometrically immersed into $\R^{n+k}$. By applying this result to a certain isometric immersion of symmetric space, we prove Theorem \ref{thm:main2}. A detailed computation by using an explicit construction of the isometric immersion  is required when $M=G_{2}/SO(4)$, and we give the details in Section \ref{sec:g2}.

\section{Preliminaries}\label{sec:pre}

Let $X=(V, E)$ be a finite graph, where $V$ is the set of vertices, $E$ is the set of oriented edges. We allow a loop and multiple edges in $X$. We denote the origin and the terminate of an edge $e$ by $o(e)$ and $t(e)$, respectively. The reversed edge of $e$ is denoted by $\overline{e}$. A graph $X$ is said to be {\it connected} if arbitrary two vertices $x,y\in V$ can be joined by a path $c=(e_{1},\ldots, e_{N})$ satisfying that $o(e_{1})=x$, $t(e_{i})=o(e_{i+1})$ for any $i=1,\ldots, N-1$ and $t(e_{N})=y$. Throughout this paper, we assume $X$ is a connected graph. A {\it weight function} $m_E:E\to \R_{>0}$ is a positive valued function on $E$  satisfying that $m_E(\overline{e})=m_E(e)$ for any edge $e\in E$. We call the pair $(X, m_E)$ {\it weighted graph}. 

 In the following, we identify $X$ with a CW-complex as described in \cite{KS}. In particular, each edge $e\in E$ is identified with the closed interval $[0,1]$ and we denote the parameter of $[0,1]$ by $t$. Note that the reversed edge $\overline{e}$ is parameterized by $1-t$.
For a given smooth manifold $M$, we consider a piecewise $C^k$-map $f: X\to M$, which we simply say a {\it discrete ($C^k$-)map}. Here, {\it piecewise $C^k$} means that the restricted map $f_e:=f|_e:[0,1]\to M$ is a $C^k$-map for each $e\in E$.   Note that $f_{\overline{e}}(t)=f_{e}(1-t)$ in this notation.
In the present paper, we assume $k\geq 2$ for our purpose.  

For a given discrete map $f: X\to M$, an edge $e\in E$ is said to be {\it degenerate} if $f_{e}$ is a constant map, otherwise we say {\it non-degenerate}.
A discrete map $f$ is said to be a (discrete) {\it immersion} if $f_e:[0,1]\to M$ is an immersion for each edge $e\in E$, i.e. $\dot{f}_e:=df_{e}/dt\neq 0$ along each $e\in E$. If $f$ is an immersion, then any edge is non-degenerate.

We denote the set of piecewise smooth vector filed along $f$ by $\Gamma(f^{-1}TM)$. More precisely, we set 
$
\Gamma(f^{-1}TM):=
\left\{
W:X\to TM\;\left|\;
  \begin{gathered}
    W_{e}\in \Gamma(f^{-1}_{e}TM)\ \forall e\in E
  \end{gathered}
\right.
\right\}
$,
where $W_{e}:=W|_{e}$. %Note that $W_{e}(x)=W_{e'}(x)$ $\forall e,e'\in E_{x}$ and $\forall x\in V$.

\subsection{Discrete minimal immersion and ($p$-)harmonic map}\label{subsec:pharm}
Although our main interest in the present paper is the length or energy functional, we introduce a general notion in order to deal with both functionals simultaneously. 

Let $(X,m_{E})$ be a finite weighted graph and $(M,g)$ a smooth Riemannian manifold.
For an integer $p\geq 1$, we define the {\it $p$-energy} ${\bf E}^p$  of the discrete map $f:(X,m_{E})\to (M,g)$  by 
\[
{\bf E}^p(f):=\frac{1}{p}\sum_{e\in E} m_E(e) \int_0^1 \|\dot{f}_e\|_g^p\, dt,
\]
where $\|\dot{f}_{e}\|_{g}=g(\dot{f}_{e},\dot{f}_{e})^{1/2}$. Note that ${\bf E}^p$ is regarded as a discrete analogue of the $p$-energy between smooth Riemannian manifolds (cf. \cite{BG}). When $p=1$, ${\bf E}^1={\bf L}$ and it coincides with the (twice of) sum of weighted length of $f$. When $p=2$, ${\bf E}^{2}={\bf E}$ and it coincides with the discrete energy considered in \cite{CdV, KS, KT}.

We make a technical, but a reasonable assumption only in the case when $p=1$. Namely, {\it we assume $f$ is an immersion if we consider the critical point of ${\bf E}^1={\bf L}$}. Under this assumption, we obtain the following useful characterization of the critical point of ${\bf E}^{p}$.

\begin{proposition}\label{prop:pharm}
Let $(M,g)$ be a smooth Riemannian manifold, $(X,m_E)$ a  finite  weighted graph and $f:(X,m_{E})\to (M,g)$  a discrete $C^{2}$-map. We set  $T_{e}:=\dot{f}_e$ for each $e\in E$.
\begin{itemize}
\item[(1)] Suppose $p=1$ and $f$ is an immersion. Then, $f$ is a critical point of ${\bf E}^{1}={\bf L}$ if and only if the following two conditions hold:
\begin{enumerate}
\item $f_{e}:[0,1]\to (M,g)$ is a minimal immersion for each edge $e$, i.e. $(\nabla_{T_e}T_e)^{\perp_e}=0$ for the Levi-Civita connection $\nabla$ of $(M,g)$, where $\perp_e$ is the orthogonal projection onto the normal space of $f_e$. This means that the image of $f_e$ is contained in a geodesic segment.
\item For each vertex $x$, it holds that
\[
\sum_{e\in E_x}m_E(e)\|T_e\|^{-1}{T_e}(x)=0,
\]
\end{enumerate}
where $E_x$ denotes the set of edges starting from $x$.
\item[(2)] Suppose $p\geq 2$. Then,  $f$ is a critical point of ${\bf E}^{p}$ if and only if the following two conditions hold:

\begin{enumerate}
\item $f_{e}:[0,1]\to (M,g)$ is a geodesic map for each edge $e$, i.e. $\nabla_{T_e}T_e=0$. 
\item For each vertex $x$, it holds that
\[
\sum_{e\in E_x}m_E(e)\|T_e\|^{p-2}T_e(x)=0.
\]
\end{enumerate}
\end{itemize}
We say the condition {\rm (ii)} for each case the {\rm ($p$-)balanced condition}.
\end{proposition}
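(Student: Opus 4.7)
The plan is to apply the first variation formula edge by edge, integrate by parts, and localize the test variation to separate the Euler--Lagrange equation on each edge from the balanced condition at each vertex. For a piecewise $C^2$ deformation $\{f_s\}$ of $f$ with variation field $W=\partial f_s/\partial s\big|_{s=0}\in\Gamma(f^{-1}TM)$, differentiating under the integral sign and using the torsion-free identity $\nabla_{\partial_s}\dot f_{s,e}\big|_{s=0}=\nabla_{T_e}W_e$ gives
\[
\frac{d}{ds}\Big|_{s=0}{\bf E}^p(f_s)=\sum_{e\in E}m_E(e)\int_0^1\|T_e\|^{p-2}\,g(\nabla_{T_e}W_e,T_e)\,dt,
\]
and integrating by parts via $g(\nabla_{T_e}W_e,T_e)=\tfrac{d}{dt}g(W_e,T_e)-g(W_e,\nabla_{T_e}T_e)$ yields, on each edge, a boundary term $m_E(e)\bigl[\|T_e\|^{p-2}g(W_e,T_e)\bigr]_0^1$ plus an interior integral against $\nabla_{T_e}(\|T_e\|^{p-2}T_e)$.

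The edgewise Euler--Lagrange equation is extracted by first testing against variations $W$ supported in the interior of a single edge $e$, which kills all boundary contributions. For $p\ge 2$ the usual density argument then gives the ODE $\nabla_{T_e}(\|T_e\|^{p-2}T_e)=0$; taking the $g$-inner product with $T_e$ and applying the chain rule produces $(p-1)\|T_e\|^{p-2}g(\nabla_{T_e}T_e,T_e)=0$, so $\|T_e\|$ is constant along $e$, and substituting back reduces the ODE to the geodesic equation $\nabla_{T_e}T_e=0$ (trivially if $T_e\equiv 0$, by dividing by $\|T_e\|^{p-2}$ otherwise). For $p=1$ the standing immersion hypothesis makes $\|T_e\|$ smooth and strictly positive, so the classical first variation of arc-length carries over and gives the minimal (reparametrized geodesic) condition~(i).

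With (i) established, the interior integrals vanish identically and I next take variations $W$ localized near a single vertex $x$ with $W(x)\in T_{f(x)}M$ prescribed and $W$ vanishing at the far endpoint of each incident edge. The surviving boundary sum at $x$ requires careful bookkeeping: each geometric edge meeting $x$ enters the sum over $E$ both as $e\in E_x$ (contributing from $t=0$) and as $\bar e$ (contributing from $t=1$), and using $T_{\bar e}(1)=-T_e(0)$, $W_{\bar e}(1)=W(x)$ and $m_E(\bar e)=m_E(e)$ one checks that the two contributions carry the \emph{same} sign; loops at $x$ (where both $e,\bar e\in E_x$) work out by the same computation. The net result is
\[
\frac{d}{ds}\Big|_{s=0}{\bf E}^p(f_s)=-2\,g\!\left(W(x),\ \sum_{e\in E_x}m_E(e)\|T_e(x)\|^{p-2}T_e(x)\right),
\]
which vanishes for every choice of $W(x)\in T_{f(x)}M$ iff the ($p$-)balanced condition (ii) holds at $x$. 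Running the argument at every vertex finishes the necessity direction, and reading the whole computation backwards shows that (i)+(ii) makes every first variation zero, giving sufficiency.

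The main obstacle I anticipate is precisely the sign bookkeeping under edge reversal (and the parallel check for loops), which one must nail down to see that the two boundary contributions from $e$ and $\bar e$ at a common vertex combine constructively rather than cancel; once this is pinned down, the remainder is routine first-variation calculus. The only other subtlety is the singularity of $t\mapsto\|T_e(t)\|^{p-2}$ at zeros of $T_e$ when $p=1$, which is precisely why the immersion hypothesis is imposed in that case.
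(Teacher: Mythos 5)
Your proposal is correct and follows essentially the same route as the paper: compute the first variation, integrate by parts on each edge, use $T_{\overline e}(1)=-T_e(0)$, $W_{\overline e}(1)=W_e(0)$ and $m_E(\overline e)=m_E(e)$ to assemble the boundary terms into $-2\sum_x\sum_{e\in E_x}m_E(e)\|T_e\|^{p-2}g(T_e,W)_x$, and then localize to separate the edge equation $\nabla_{T_e}(\|T_e\|^{p-2}T_e)=0$ from the balanced condition. Your reduction of that ODE to condition (i) for $p\ge 2$ (pairing with $T_e$ to get constancy of $\|T_e\|$ first) is a slightly cleaner handling of the degenerate-edge case than the paper's, but the argument is the same in substance.
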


\begin{proof}
 Let $\{f_s\}_{s\in (-\epsilon,\epsilon)}$ be a piecewise smooth deformation of a discrete map $f=f_0$.  We denote the variational vector field by $V_{s}:=\p f_{s}/\p s$. Notice that $[V_{e},T_{e}]=0$ for any $e$. 
% \textcolor{red}{[Check] For simplicity of proof, we suppose $\|T_{e}\|^{p-2}(t)$ is differentiable with respect to the variable $t$. When $p=1$, this is guaranteed by the assumption that $f$ is a $C^{2}$-immersion, and when $p\neq 3$, that is always true since $f$ is a $C^{2}$-map. When $p=3$,  $\|T_{e}\|^{p-2}$ would not be differentiable, but one may obtain the  desired conclusion by a sight modification of the following proof (We leave the details to the reader).}
Then,
\begin{align*}
\frac{d}{ds}{\bf E}^p(f_s)&=\frac{1}{p}\sum_{e\in E}m_E(e)\int_0^1\frac{\partial}{\partial s}\left(g(T_e,T_e)^{\frac{p}{2}}\right)dt=\sum_{e\in E}m_E(e)\int_0^1\|T_e\|^{p-2}g(T_e,\nabla_{V_e}T_e)\,dt\\
&=\sum_{e\in E}m_E(e)\int_0^1g(\|T_e\|^{p-2}T_e, \nabla_{T_e}V_e)dt.
\end{align*}
Note that $\|T_e\|^{p-2}T_e$ is a differentiable vector field along $e$ for any $p\geq 1$ (If $p=1$, this is guaranteed by the assumption that $f$ is an immersion). Thus, we see
\begin{align*}
\frac{d}{ds}{\bf E}^p(f_s)&=\sum_{e\in E}m_E(e)\int_0^1\frac{\p}{\p t}g(\|T_e\|^{p-2}T_e, V_e)-g(\nabla_{T_e}(\|T_e\|^{p-2}T_e), V_e)\,dt\\
&=\sum_{e\in E}m_E(e)\Big[g(\|T_e\|^{p-2}T_e, V_e)\Big]_{t=0}^{t=1}-\sum_{e\in E}m_E(e)\int_0^1g(\nabla_{T_e}(\|T_e\|^{p-2}T_e), V_e)\,dt.
%=
%&=\frac{2}{p}\sum_{e\in E}m_E(e)\int_0^1\frac{\partial}{\partial t}\left(g(V_e,T_e)^{\frac{p}{2}}\right)dt
%-\sum_{e\in E}m_E(e)\int_0^1g(T_e,T_e)^{\frac{p}{2}-1}g(V_e,\nabla_{T_e}T_e)dt\\
%&=\frac{2}{p}\sum_{e\in E}m_E(e)\Big[g(V_e,T_e)^{\frac{p}{2}}\Big]_{t=0}^{t=1}-\sum_{e\in E}m_E(e)\int_0^1g(T_e,T_e)^{\frac{p}{2}-1}g(V_e,\nabla_{T_e}T_e)dt
\end{align*} 
Since
$V_e(s,1)=V_{\overline{e}}(s,0)$, $T_e(s,1)=-T_{\overline{e}}(s,0)$
and $m_E(e)=m_E(\overline{e})$, we have
\begin{equation*}
\begin{split}
\sum_{e\in E}m_E(e)g(\|T_e\|^{p-2}T_e, V_e)(s,1)
&=-\sum_{e\in E}m_E(\overline{e})g(\|T_{\overline{e}}\|^{p-2}T_{\overline{e}},V_{\overline{e}})(s,0).
\end{split}
\end{equation*}
Therefore, the first variation becomes
\begin{align}\label{eq:first}
\frac{d}{ds}{\bf E}^p(f_s)=-2\sum_{x\in V}\sum_{e\in E_x}m_E(e)g(\|T_e\|^{p-2}T_e,V_e)_x-\sum_{e\in E}m_E(e)\int_0^1g(\nabla_{T_e}(\|T_e\|^{p-2}T_e), V_e)\,dt.
\end{align}
Since we may assume $V\in \Gamma(f^{-1}TM)$ is arbitrary and $V_e(x)$ is independent of $e\in E_x$,  \eqref{eq:first} shows that $f=f_0$ is a critical point of ${\bf E}^{p}$ if and only if 
\begin{itemize}
\item[${\rm (i)}'$] $\nabla_{T_e}(\|T_e\|^{p-2}T_e)=0$ for any edge $e$, and 
\item[(ii)] $\sum_{e\in E_x}m_E(e)\|T_e\|^{p-2}T_e(x)=0$ for any vertex $x$.
\end{itemize}
Thus, the rest is to show that ${\rm (i)'}$ is equivalent to ${\rm (i)}$ for each $p$ (If $p=2$, this is obvious). On any neighborhood satisfying that $\|T_e\|\neq 0$, we have
\begin{align}\label{eq:i}
\nabla_{T_e}(\|T_e\|^{p-2}T_e)&=(\p_t\|T_e\|^{p-2})T_e+\|T_e\|^{p-2}\nabla_{T_e}T_e\\
&=(\p_t\|T_e\|^{p-2})T_e+\|T_e\|^{p-2}\{(\nabla_{T_e}T_e)^{\top_e}+(\nabla_{T_e}T_e)^{\perp_e}\}\nonumber\\
&=\frac{p-2}{2}\|T_e\|^{p-4}(\p_t\|T_e\|^2)T_e+\frac{1}{2}\|T_e\|^{p-4}(\p_t\|T_e\|^2)T_e+\|T_e\|^{p-2}(\nabla_{T_e}T_e)^{\perp_e}\nonumber\\
&=\frac{p-1}{2}\|T_e\|^{p-4}(\p_t\|T_e\|^2)T_e+\|T_e\|^{p-2}(\nabla_{T_e}T_e)^{\perp_e}\nonumber
\end{align}
 where $\top_e$ (resp. $\perp_e$) is the orthogonal projection onto the tangent (resp. normal) space of $f_e$.  
 
 If $p=1$, \eqref{eq:i} shows that ${\rm (i)}'$ is equivalent to (i) $(\nabla_{T_e}T_e)^{\perp_e}=0$ since $\|T_e\|\neq 0$ along each $e$. Suppose $p\geq 2$. In this case, \eqref{eq:i} implies that  if $\|T_e\|\neq 0$, then 
\[
\nabla_{T_e}(\|T_e\|^{p-2}T_e)=0\ \Longleftrightarrow\ \p_t\|T_e\|^2=0\ {\rm and}\  (\nabla_{T_e}T_e)^{\perp_e}=0\ \Longleftrightarrow\ \nabla_{T_e}T_e=0.
\]
Therefore, ${\rm (i)}'$ is equivalent to (i) on the open subset $\{t\in [0,1]\mid \|T_e(t)\|\neq 0\}$. Since $\nabla_{T_e}T_e(t)=0$ at the point $t$ satisfying that $T_e(t)=0$,  the equivalence of ${\rm (i)}'$ and (i) holds along $e$.  
\end{proof}

 When $p=1$,  the restricted map $f_e:[0,1]\to (M,g)$ is not necessarily a geodesic (i.e., the parameter $t$ is not necessarily proportional to the arc-length parameter), although a critical point of ${\bf E}^{p}$ ($p\geq 2$) is automatically a geodesic map.
With this in mind, we put the following definition:

\begin{definition}
Let $(X, m_{E})$ be a finite weighted graph, and $(M,g)$ a smooth Riemannian manifold.
\begin{itemize}
\item An  immersion $f: (X,m_{E})\to (M,g)$ is said to be a  {\rm (discrete) minimal immersion} if $f$ is a critical point of ${\bf E}^{1}={\bf L}$, or equivalently, $f$ satisfies the conditions {\rm (i)} and {\rm (ii)} in Proposition \ref{prop:pharm} {\rm (1)}. Moreover, a  minimal immersion $f$ is said to be a {\rm minimal geodesic immersion} if $f_{e}:[0,1]\to M$ is a geodesic for each $e\in E$.
\item For $p\geq 2$, a discrete $C^{2}$-map $f: (X,m_{E})\to (M,g)$ is said to be a  {\rm (discrete)  $p$-harmonic map} if $f$ is a critical point of ${\bf E}^{p}$, or equivalently, $f$ satisfies the conditions {\rm (i)} and {\rm (ii)} in Proposition \ref{prop:pharm} {\rm (2)}. If furthermore, $f$ is an immersion, we say $f$ a {\rm $p$-harmonic immersion}. 
\end{itemize}
\end{definition}

\begin{remark}
{\rm The minimal geodesic immersion with uniform weight function $m_{E}\equiv 1$ is also referred to as a {\it stationary geodesic net} or a {\it minimal geodesic net} (cf. \cite{Ch}). Note also that the notion of $2$-harmonic map is the same as the discrete harmonic map considered in \cite{CdV, KS, KT}}.
\end{remark}

Notice that any discrete minimal immersion $f$ becomes a discrete minimal geodesic immersion $f_{geo}$ by changing the parameter. More precisely, for each $e\in E$, we define a bijective differentiable function $s_{e}:[0,1]\to [0,1]$ by
\[
s_{e}(t):=\frac{1}{l_{e}}\int_{0}^{t} \|\dot{f}_{e}\|\,dt,
\]
where $l_{e}$ is the length of $f_{e}$. Then we define $f_{geo}: (X,m_{E})\to (M,g)$ by $f_{geo,e}(t):=f_{e}(s_{e}^{-1}(t))$, where $s_{e}^{-1}$ is the inverse function of $s_{e}$. It is easy to see that $f_{geo}$ is a minimal geodesic immersion satisfying that ${\bf E}^{1}(f)={\bf E}^{1}(f_{geo})$.

 Moreover, 
Proposition \ref{prop:pharm} implies the following simple relation between minimal geodesic immersions and $p$-harmonic immersions:  Let $f: (X,m_E)\to (M,g)$ be a minimal geodesic immersion.  We define a discrete map $f^{p}:(X,m_{E}^{p})\to (M,g)$ by
\begin{align}\label{eq:weight}
&\textup{ $f^{p}:=f$\quad and\quad  $m_E^{p}(e):=m_E(e)\|T_e\|^{1-p}$ for $e\in E$}.
\end{align}
Then, $m_E^{p}:E\to \R$ is a  positive weight  function since $\|T_{e}\|$ is a non-zero constant along each $e\in E$. Note that the definition of $m_{E}^{p}$ depends on $f$. By definition, $f^{p}: (X, m_{E}^{p})\to (M,g)$ is a geodesic map satisfying the $p$-balanced condition. Therefore, $f^{p}$ is a $p$-harmonic immersion by Proposition \ref{prop:pharm}. 
Conversely, if $f$ is a $p$-harmonic immersion, we obtain a minimal geodesic immersion by the reverse procedure. We summarize the result as follows.
\begin{proposition}\label{prop:corr}
Any minimal geodesic immersion $f: (X, m_E)\to (M,g)$ yields a $p$-harmonic immersion $f^{p}: (X, m_E^{p})\to (M,g)$ for any $p\geq 2$ by changing the weight function, and the converse construction also holds. 
\end{proposition}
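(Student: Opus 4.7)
The plan is to verify the two directions by directly checking that the defining conditions of Proposition \ref{prop:pharm} are preserved under the reweighting $m_E \leftrightarrow m_E^p$. The core observation making the weight swap admissible is that along a geodesic immersion the speed $\|T_e\|$ is a positive constant on each edge, so $m_E^p(e) = m_E(e)\|T_e\|^{1-p}$ is a well-defined positive number, and symmetry $m_E^p(\overline{e}) = m_E^p(e)$ follows from $\|T_{\overline{e}}\| = \|T_e\|$ together with $m_E(\overline{e}) = m_E(e)$.

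For the forward direction, suppose $f: (X,m_E) \to (M,g)$ is a minimal geodesic immersion. Then by definition $\nabla_{T_e} T_e = 0$ on every edge, so condition (i) of Proposition \ref{prop:pharm}(2) is automatic for $f^p = f$. For the $p$-balanced condition at a vertex $x$, I would compute
\begin{equation*}
\sum_{e \in E_x} m_E^p(e)\|T_e\|^{p-2} T_e(x)
= \sum_{e \in E_x} m_E(e)\|T_e\|^{1-p}\|T_e\|^{p-2}T_e(x)
= \sum_{e \in E_x} m_E(e)\|T_e\|^{-1}T_e(x),
\end{equation*}
which vanishes by the $1$-balanced condition (ii) of Proposition \ref{prop:pharm}(1) satisfied by $f$. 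Hence $f^p$ is a $p$-harmonic immersion.

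For the converse, start with a $p$-harmonic immersion $h: (X, \tilde m_E) \to (M,g)$ for some $p\geq 2$. Since $h$ satisfies condition (i) of Proposition \ref{prop:pharm}(2), each $h_e$ is a geodesic and $\|T_e\|$ is a positive constant on every edge, so define $m_E(e) := \tilde m_E(e) \|T_e\|^{p-1}$. This is a legitimate weight function (positive, symmetric in $e$ and $\overline{e}$), and $h$ is certainly a geodesic immersion, so condition (i) of Proposition \ref{prop:pharm}(1) holds. The $1$-balanced condition reduces to
\begin{equation*}
\sum_{e \in E_x} m_E(e)\|T_e\|^{-1} T_e(x)
= \sum_{e \in E_x} \tilde m_E(e)\|T_e\|^{p-2} T_e(x) = 0
\end{equation*}
by the $p$-balanced condition for $h$. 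Thus $h$ becomes a minimal geodesic immersion with respect to $m_E$, and the assignment $m_E \leftrightarrow m_E^p$ is visibly an involution (up to the dependence on $f$), completing the correspondence.

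There is no serious obstacle here; the only subtlety worth flagging explicitly is that the construction of $m_E^p$ requires $\|T_e\|$ to be constant on each edge, which is why the hypothesis ``geodesic'' (rather than merely ``minimal'') is needed on the $p=1$ side of the correspondence. If one started from a general discrete minimal immersion, one would first reparametrize via the bijection $s_e$ described earlier in the text to obtain a minimal geodesic immersion, and only then apply the reweighting.
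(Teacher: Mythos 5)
Your proof is correct and follows essentially the same route as the paper, which establishes the proposition by exactly this reweighting computation (stated more briefly in the paragraph preceding Proposition \ref{prop:corr}): both directions reduce to observing that the $p$-balanced condition for $(f, m_E^p)$ coincides with the $1$-balanced condition for $(f, m_E)$ once $\|T_e\|$ is a positive constant on each edge. Your explicit verification of positivity and symmetry of the new weight, and the remark about first reparametrizing a general minimal immersion to a geodesic one, match the paper's treatment.
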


\begin{example}\label{eg:loop}
{\rm 
 Any closed geodesic in a Riemannian manifold $M$ is regarded as a discrete minimal or $p$-harmonic immersion of a loop $X=(V,E)=(\{\gamma(0)\}, \{\gamma,\overline{\gamma}\})$ with arbitrary weight  $m_{E}$ since the balanced condition is automatically satisfied at the vertex. 
Similarly, if there exist finitely many closed geodesics $\gamma_{1},\ldots, \gamma_{N}$ starting from the same point $w\in M$, then the union of geodesics with arbitrary weight function $m_{E}$ can be regarded as a discrete minimal or a $p$-harmonic immersion of an {\it $N$-bouquet graph} with a single vertex. 

More generally, let $\gamma_{1},\ldots, \gamma_{N}$ $(N\geq 2)$ be simple closed geodesics in $M$ such that any two distinct geodesics intersect transversally, i.e. for any two distinct numbers $i, j\in \{1,\ldots, N\}$, $\gamma_i([0,1])\cap\gamma_j([0,1])$ is either an empty set or consists of finite number of points.  We suppose that, for any $i$,  there exists $j\neq i$ such that $\gamma_i([0,1])\cap\gamma_j([0,1])\neq \phi$ so that there is no isolated geodesic. Then we define the set of vertices $V$ by the set of intersection points:
\begin{align*}
V&:=\{w\in M\mid w\in \gamma_i([0,1])\cap\gamma_j([0,1])\ \textup{for some distinct $i,j\in \{1,\ldots, N\}$}\}.
\end{align*}
If there are at least two intersection points on $\gamma_{i}$, then $\gamma_{i}$ is divided into some geodesic segments and we regard each geodesic segment as a (non-oriented) edge. If there is a single intersection point on $\gamma_{i}$,  we regard $\gamma_{i}$ as a  (non-oriented) loop. Then, we obtain a finite graph $X=(V,E)$ and an embedding $f:X\to M$ such that $f_{e}:[0,1]\to M$ is a geodesic map that image coincides with the corresponding geodesic segment for each $e\in E$. If we consider a uniform weight $m_E\equiv 1$, then $f: (X, m_{E})\to (M,g)$ becomes a discrete minimal geodesic immersion. Note that $f$ is not necessarily a $p$-harmonic map.

A typical example of $(M,g)$ admitting such a union of closed geodesics is given by a {\it compact rank one symmetric space} (CROSS for short) (i.e. $M$ is either $S^{n}$, $\R P^{n}$, $\C P^{n}$, $\mathbb{H}P^{n}$ or $\mathbb{O}P^{2}$ with the canonical Riemannian metric as a symmetric space). In fact,  if $M$ is a CROSS, any geodesic is periodic with the same period $l$.

Moreover, we can construct a discrete minimal immersion into a compact Riemannian symmetric space $M$ of higher rank by a similar method.
Note that not all geodesics are periodic if the rank of $M$ is grater than $1$.  Indeed, the flat torus $T^{n}$ ($n\geq 2$) has infinitely many non-periodic geodesics, and so does $M$ since any compact Riemannian symmetric space contains a totally-geodesic flat torus $T^{r}$ whose dimension $r$ coincides with the rank of $M$ ($T^{r}$ is called the {\it maximal torus} of $M$). However, the flat torus $T^r$ also contains infinitely many periodic geodesics. Since any point $w\in M$ is contained in a maximal torus $T^{r}$, there exist infinitely many direction $v\in T_{w}T^{r}\subseteq T_{w}M$ such that the geodesic in the direction $v$ is periodic. 
Therefore,  one may construct a union of (simple) closed geodesics $\gamma_{1},\ldots, \gamma_{N}$ in $M$ even if $M$ is of rank grater than 1, and this yields a discrete minimal immersion into $M$.
}
\end{example}

See \cite{LiS, NR} for some existence results of discrete minimal immersion in a closed Riemannian manifold. 
Note also that, if $M$ is a compact Riemannian manifold, there exists at least one discrete $2$-harmonic map $f: (X, m_E)\to (M,g)$ (possibly a constant map) in a given homotopy class $\mathcal{C}=[f_0]$ of arbitrary piecewise continuous map $f_0:X\to M$ (See \cite{CdV, KS}). 
%If the $2$-harmonic map $f$ is not a constant map, $f$ yields a discrete minimal immersion or a discrete $p$-harmonic map of $p\geq 3$ by using the reduced immersion and Proposition \ref{prop:corr}. 
See also \cite{KS, KT} for some explicit examples of $2$-harmonic map into a flat torus or a closed Riemann surface.

\subsection{Second variation}
Throughout this subsection, we suppose $f: (X, m_E)\to (M,g)$ is either a minimal immersion with $p=1$ or a $p$-harmonic map with $p\geq 2$.
In this section, we shall compute the second variation of ${\bf E}^{p}$. 

For $V\in \Gamma(f^{-1}TM)$, we denote the orthogonal projection onto the tangent space and normal space of $f_e$ by $V^{\top_e}$ and $V^{\perp_e}$, respectively.

\begin{remark} \label{rem:vf}
{\rm 
For a given $V\in \Gamma(f^{-1}TM)$, $\{V_{e}^{\perp_{e}}\}_{e\in E}$ (and also $\{V_{e}^{\top_{e}}\}_{e\in E}$) does {\it not} define an element in $\Gamma(f^{-1}TM)$ in general (although $V_{e}^{\perp_{e}}\in \Gamma(f_{e}^{-1}TM)$ for each $e\in E$),   because it could admit a vertex $x$ such that $V_{e}^{\perp_{e}}(x)\neq V_{e'}^{\perp_{e'}}(x)$ for some $e,e'\in E_{x}$. 
 }
\end{remark}

\begin{proposition}\label{lem:svf}
Let $f:(X, m_{E})\to (M,g)$ be either a minimal immersion  (not necessarily a geodesic map)  with $p=1$ or a $p$-harmonic map with $p\geq 2$. Then, 
\begin{align}\label{eq:svf0}
\frac{d^2}{ds^2} {\bf E}^p(f_s)\Big{|}_{s=0}=\sum_{e\in E}m_E(e)\int_0^1 \Big{\{}\|\nabla_{T_e}V_e\|^2-g(R(V_e, T_e)T_e, V_e)+(p-2)\|(\nabla_{T_e}V_e)^{\top_e}\|^2\Big{\}}\|T_e\|^{p-2}\,dt,
\end{align}
for any picewise smooth defomation $\{f_{s}\}_{s\in (-\epsilon,\epsilon)}$ of $f=f_{0}$.
\end{proposition}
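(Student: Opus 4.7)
The plan is to differentiate the first-variation formula
\[
\frac{d}{ds}{\bf E}^p(f_s) = \sum_{e\in E} m_E(e) \int_0^1 g\bigl(\|T_e\|^{p-2} T_e,\, \nabla_{T_e} V_e\bigr)\, dt
\]
derived in the proof of Proposition \ref{prop:pharm} once more in $s$, and then use the critical-point conditions to suppress every term involving the ``acceleration'' $\nabla_s V$. By the symmetry lemma $\nabla_s T_e = \nabla_{T_e} V_e$, the $s$-derivative of $\|T_e\|^{p-2}$ produces $(p-2)\|T_e\|^{p-4}\,g(T_e, \nabla_{T_e} V_e)^2$, while the $s$-derivative of $g(T_e, \nabla_{T_e} V_e)$ produces $\|\nabla_{T_e} V_e\|^2 + g(T_e, \nabla_s \nabla_{T_e} V_e)$.

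The curvature term comes from the commutator identity
\[
\nabla_s \nabla_{T_e} V_e = \nabla_{T_e} \nabla_s V_e + R(V_e, T_e) V_e,
\]
together with the antisymmetry $g(T_e, R(V_e, T_e) V_e) = -g(R(V_e, T_e) T_e, V_e)$, which delivers the $-g(R(V_e, T_e) T_e, V_e)$ summand. The leftover piece $\sum_{e} m_E(e)\int_0^1 \|T_e\|^{p-2} g(T_e, \nabla_{T_e} \nabla_s V_e)\,dt$ can be eliminated by repeating the integration-by-parts step from the first-variation computation, but with $\nabla_s V_e$ in place of $V_e$. After summing over $e$, the interior contribution is proportional to $\nabla_{T_e}(\|T_e\|^{p-2} T_e)$, which vanishes by condition (i)$'$ in the proof of Proposition \ref{prop:pharm}, and the boundary contribution at each vertex $x$ pairs $\nabla_s V(x)$ against $\sum_{e\in E_x} m_E(e)\|T_e\|^{p-2} T_e(x)$, which vanishes by the $p$-balanced condition (ii). For this boundary pairing to make sense, I will use that $\nabla_s V_e(x)$ is independent of $e\in E_x$, which follows from the continuity of $f_s$ at the vertex forcing $V_e(s,0) = \partial_s f_s(x)$ for every $e\in E_x$.

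It then remains to recast the quadratic-in-$\nabla_{T_e} V_e$ piece. On the part of the edge where $T_e\neq 0$, the orthogonal decomposition along $f_e$ gives $g(T_e, \nabla_{T_e} V_e)^2 = \|T_e\|^2 \|(\nabla_{T_e} V_e)^{\top_e}\|^2$, whence
\[
(p-2)\|T_e\|^{p-4}\,g(T_e, \nabla_{T_e} V_e)^2 = (p-2)\|T_e\|^{p-2}\, \|(\nabla_{T_e} V_e)^{\top_e}\|^2,
\]
which is the last summand in the claim. For $p=1$ the immersion hypothesis prevents $T_e$ from vanishing; for $p\geq 2$ the overall factor $\|T_e\|^{p-2}$ makes both sides vanish at any zero of $T_e$, so the identity is valid on all of $[0,1]$. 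I expect the only real technical care to be in the vertex bookkeeping of the acceleration step: one must mirror the sign convention $T_e(s,1) = -T_{\overline e}(s,0)$ and use the even dependence of both $m_E$ and $\|T_e\|^{p-2}$ on edge orientation, exactly as in the first-variation derivation, so that the boundary contributions of opposite orientations combine correctly at each vertex.
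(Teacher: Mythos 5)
Your proposal is correct and follows essentially the same route as the paper's proof: differentiate the first-variation formula once more in $s$, use $\nabla_s T_e=\nabla_{T_e}V_e$ and the curvature commutator to produce the $-g(R(V_e,T_e)T_e,V_e)$ term, and eliminate the acceleration term $g(\|T_e\|^{p-2}T_e,\nabla_{T_e}\nabla_sV_e)$ by the same integration by parts, invoking condition ${\rm (i)}'$ in the interior and the $p$-balanced condition at the vertices (where $\nabla_sV_e=\nabla_{V_e}V_e$ is single-valued). The treatment of the tangential term and of the points where $T_e$ vanishes also matches the paper's.
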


\begin{proof}
By direct computation, we see
\begin{align} \label{eq:svf1}
\frac{d^2}{d s^2} {\bf E}^p(f_{s})
&=\frac{1}{p}\sum_{e\in E}m_E(e)\int_0^1 \frac{\p^2}{\p s^2}g(T_e, T_e)^{\frac{p}{2}}\, dt=\sum_{e\in E}m_E(e)\int_0^1 \frac{\p}{\p s}g(\nabla_{V_e}T_e, \|T_e\|^{p-2}T_e)\, dt\\
&=\sum_{e\in E}m_E(e)\int_0^1 g(\nabla_{V_e}T_e, \nabla_{V_e}(\|T_e\|^{p-2}T_e))+g(\nabla_{V_e}\nabla_{V_e}T_e, T_e)\|T_e\|^{p-2}\,dt \nonumber\\
&=\sum_{e\in E}m_E(e)\int_0^1 g(\nabla_{T_e}V_e, \nabla_{V_e}(\|T_e\|^{p-2}T_e))+\{-g(R(V_e, T_e)T_e, V_e)+g(\nabla_{T_e}\nabla_{V_e}V_e, T_e)\}\|T_e\|^{p-2}\,dt,
\nonumber
\end{align}
where we used $[V_e, T_e]=0$. On a subset satisfying that $\|T_e\|\neq 0$,  we compute
\begin{align*}
g(\nabla_{T_e}V_e, \nabla_{V_e}(\|T_e\|^{p-2}T_e))&=V_e\|T_e\|^{p-2}\cdot g(\nabla_{T_e}V_e, T_e)+\|T_e\|^{p-2}\|\nabla_{T_e}V_e\|^2\\
&=(p-2)\|T_e\|^{p-4}g(\nabla_{T_e}V_e, T_e)^2+\|T_e\|^{p-2}\|\nabla_{T_e}V_e\|^2\nonumber
\end{align*}
Since $g(\nabla_{T_e}V_e, T_e)^2=\|T_e\|^2\|(\nabla_{T_e}V_e)^{\top_e}\|^2$, we obtain
\begin{align}\label{eq:svf12}
g(\nabla_{T_e}V_e, \nabla_{V_e}(\|T_e\|^{p-2}T_e))&=\{(p-2)\|(\nabla_{T_e}V_e)^{\top_e}\|^2+\|\nabla_{T_e}V_e\|^2\}\cdot \|T_e\|^{p-2}.
\end{align}
Notice that this equation still holds if $\|T_e\|=0$ (Recall that $T_e\neq 0$ if $p=1$).

Next, we shall show  the last term of the formula \eqref{eq:svf1} is vanishing since $f=f_0$ is either a minimal immersion or  a $p$-harmonic map. Indeed, we have
\begin{align*}
&\sum_{e\in E}m_E(e)\int_0^1 g(\nabla_{T_e}\nabla_{V_e}V_e, T_e)\|T_e\|^{p-2}\, dt\Big{|}_{s=0}\\
&=\sum_{e\in E}m_E(e)\int_0^1 \frac{\p}{\p t}g(\nabla_{V_e}V_e, \|T_e\|^{p-2}T_e)-g(\nabla_{V_e}V_e, \nabla_{T_e}(\|T_e\|^{p-2}T_e))\, dt \Big{|}_{s=0}\nonumber\\
&=\sum_{e\in E}m_{E}(e)\Big{[}g(\nabla_{V_e}V_e, \|T_e\|^{p-2}T_e)\Big{]}_{t=0}^{t=1}, \nonumber
%&=-2\sum_{x\in V}\sum_{e\in E_{x}}m_E(e)g(\nabla_{V_e}V_e, \|T_e\|^{p-2}T_e)=0,\nonumber
\end{align*}
where we used the fact ${\rm (i)}'$ $\nabla_{T_e}(\|T_e\|^{p-2}T_e)=0$ as $f_{0}$ is minimal or $p$-harmonic (See proof of Proposition \ref{prop:pharm}).  Since $\nabla_{V_e}V_e|_{t=1}=\nabla_{V_{\overline{e}}}V_{\overline{e}}|_{t=0}$ and $\nabla_{V_e}V_e(x)$ does not depend on $e\in E_x$, the $p$-balanced condition implies that
\begin{align*}
\sum_{e\in E}m_{E}(e)\Big{[}g(\nabla_{V_e}V_e, \|T_e\|^{p-2}T_e)\Big{]}_{t=0}^{t=1}
&=-2\sum_{x\in V}\sum_{e\in E_{x}}m_E(e)g(\nabla_{V_e}V_e, \|T_e\|^{p-2}T_e)_{x}=0.
\end{align*}

Therefore, substituting \eqref{eq:svf12} to \eqref{eq:svf1}, we obtain the formula.
\end{proof}

According to this proposition, we set
\begin{align}\label{eq:hess}
\Hess_f^p(V,W):=\sum_{e\in E}m_E(e)& \int_0^1 \Big{\{}g(\nabla_{T_e}V_e, \nabla_{T_e}W_e) 
-g( R({V_e}, T_e)T_e, W_e)\\
&\qquad +(p-2)g((\nabla_{T_e}V_e)^{\top_e}, (\nabla_{T_e}W_e)^{\top_e}) \Big{\}}\|T_e\|^{p-2}\,dt\nonumber
\end{align}
for $V, W\in \Gamma(f^{-1}TM)$, and we call the {\it Hessian} of ${\bf E}^{p}$ at $f$.

The following expression of the Hessian is also useful. 
\begin{lemma}\label{lem:hess2}
Let $f:(X, m_{E})\to (M,g)$ be either a minimal immersion  (not necessarily a geodesic map)  with $p=1$ or a $p$-harmonic map with $p\geq 2$.  Then, we have
 \begin{align}\label{eq:hess2}
\Hess_{{f}}^p(V,W)&=\sum_{e\in E}m_E(e)\int_0^1 \Big{\{}g(\nabla_{T_e}V_e^{\perp_{e}},\nabla_{T_e}W_e^{\perp_{e}})
-g(R(V_e^{\perp_{e}}, T_e)T_e, W_e^{\perp_{e}})\\
&\qquad\qquad\qquad\qquad +(p-1)g(\nabla_{T_e}V_e^{\top_{e}},\nabla_{T_e}W_e^{\top_{e}})\Big{\}}\|T_e\|^{p-2}\,dt \nonumber
\end{align}
for $V, W\in \Gamma(f^{-1}TM)$. In particular, if $p=1$, then $\Hess_f^1(V,W)$ depends only on $\{V_e^{\perp_e}\}_{e\in E}$ and  $\{W_e^{\perp_e}\}_{e\in E}$.
\end{lemma}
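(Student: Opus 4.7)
The plan is to show that, under our hypotheses on $f$, the orthogonal projections $V_{e}\mapsto V_{e}^{\top_{e}}$ and $V_{e}\mapsto V_{e}^{\perp_{e}}$ commute with $\nabla_{T_{e}}$ along each edge, and that the curvature term drops all mixed pieces. Granting these two facts, the identity \eqref{eq:hess2} follows from a direct expansion of \eqref{eq:hess} after decomposing $V_{e}$ and $W_{e}$ into tangential and normal parts.

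First I would recall from the proof of Proposition \ref{prop:pharm} that a critical point satisfies ${\rm (i)}'$, i.e.\ $\nabla_{T_{e}}(\|T_{e}\|^{p-2}T_{e})=0$. A short computation then gives that, in both cases, $\nabla_{T_{e}}T_{e}$ is proportional to $T_{e}$: if $p\geq 2$ then $\nabla_{T_{e}}T_{e}=0$ by condition (i); if $p=1$ then $T_{e}\neq 0$ (since $f$ is an immersion) and $\nabla_{T_{e}}T_{e}=(\partial_{t}\log\|T_{e}\|)T_{e}$ is a tangential multiple of $T_{e}$. Consequently $V_{e}^{\top_{e}}=\phi T_{e}$ gives $\nabla_{T_{e}}V_{e}^{\top_{e}}=(\partial_{t}\phi+\phi\,\partial_{t}\log\|T_{e}\|)T_{e}$, which is still purely tangential. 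Differentiating the identity $g(V_{e}^{\perp_{e}},T_{e})=0$ along $T_{e}$ and using that $\nabla_{T_{e}}T_{e}\parallel T_{e}$ shows $g(\nabla_{T_{e}}V_{e}^{\perp_{e}},T_{e})=0$, so $\nabla_{T_{e}}V_{e}^{\perp_{e}}$ is purely normal. Hence
\[
(\nabla_{T_{e}}V_{e})^{\top_{e}}=\nabla_{T_{e}}V_{e}^{\top_{e}},\qquad (\nabla_{T_{e}}V_{e})^{\perp_{e}}=\nabla_{T_{e}}V_{e}^{\perp_{e}},
\]
and analogously for $W_{e}$. This gives the key splittings
\[
g(\nabla_{T_{e}}V_{e},\nabla_{T_{e}}W_{e})=g(\nabla_{T_{e}}V_{e}^{\perp_{e}},\nabla_{T_{e}}W_{e}^{\perp_{e}})+g(\nabla_{T_{e}}V_{e}^{\top_{e}},\nabla_{T_{e}}W_{e}^{\top_{e}}),
\]
and $g((\nabla_{T_{e}}V_{e})^{\top_{e}},(\nabla_{T_{e}}W_{e})^{\top_{e}})=g(\nabla_{T_{e}}V_{e}^{\top_{e}},\nabla_{T_{e}}W_{e}^{\top_{e}})$.

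Next I would handle the curvature term by the standard symmetries $R(T_{e},T_{e})=0$ and $g(R(X,T_{e})T_{e},T_{e})=0$. Writing $V_{e}^{\top_{e}}=\phi T_{e}$, $W_{e}^{\top_{e}}=\psi T_{e}$ kills the mixed pieces, leaving
\[
g(R(V_{e},T_{e})T_{e},W_{e})=g(R(V_{e}^{\perp_{e}},T_{e})T_{e},W_{e}^{\perp_{e}}).
\]
Substituting these identities into \eqref{eq:hess} collects $(p-1)g(\nabla_{T_{e}}V_{e}^{\top_{e}},\nabla_{T_{e}}W_{e}^{\top_{e}})$ from the first and third terms and yields \eqref{eq:hess2}. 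The final clause about $p=1$ is then immediate: the coefficient $(p-1)=0$ erases the tangential contribution, so the remaining integrand involves only $V_{e}^{\perp_{e}}$ and $W_{e}^{\perp_{e}}$.

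The main technical point, and hence the only real obstacle, is the verification that $\nabla_{T_{e}}$ respects the tangential/normal splitting; this is where the critical-point hypothesis enters in an essential way and where the $p=1$ case requires the immersion assumption so that $\nabla_{T_{e}}T_{e}\parallel T_{e}$ is well-defined. Once this is in place, the rest is purely algebraic manipulation with the symmetries of $R$.
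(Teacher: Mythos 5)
Your proposal is correct and follows essentially the same route as the paper: both arguments hinge on the observation that the criticality condition forces $(\nabla_{T_e}T_e)^{\perp_e}=0$, hence $\nabla_{T_e}$ preserves the tangential/normal splitting along each edge, i.e.\ $(\nabla_{T_e}X_e)^{\top_e}=\nabla_{T_e}X_e^{\top_e}$ and $(\nabla_{T_e}X_e)^{\perp_e}=\nabla_{T_e}X_e^{\perp_e}$, after which \eqref{eq:hess2} is read off from \eqref{eq:hess}. Your treatment of the curvature term via $R(T_e,T_e)=0$ and $g(R(\cdot,T_e)T_e,T_e)=0$ is exactly the algebra the paper leaves implicit.
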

\begin{proof}
Let $U_{e}$  be any tangent vector field along $f_{e}$ and $N_{e}$  be any normal vector field along $f_{e}$.
Since $(\nabla_{T_{e}}T_{e})^{\perp_{e}}=0$ in any case, we have $g(\nabla_{T_{e}}N_{e},T_{e})=-g(N_{e}, \nabla_{T_{e}}T_{e})=0$, and hence, $\nabla_{T_{e}}N_{e}$ is orthogonal to $T_{e}$. Moreover, we see $g(\nabla_{T_{e}}U_{e}, N_{e})=-g(U_{e},\nabla_{T_{e}}N_{e})=0$ since $U_{e}$ is tangent to $f_{e}$. Therefore $\nabla_{T_{e}}U_{e}$ is also parallel to $T_{e}$. In particular, for any vector field $X_{e}$ along $f_{e}$, the decomposition $\nabla_{T_{e}}X_{e}=\nabla_{T_{e}}X_{e}^{\top_{e}}+\nabla_{T_{e}}X_{e}^{\perp_{e}}$ implies that
\begin{align}\label{eq:svf15}
(\nabla_{T_{e}}X_{e})^{\top_{e}}=\nabla_{T_{e}}X_{e}^{\top_{e}}\quad {\rm and}\quad (\nabla_{T_{e}}X_{e})^{\perp_{e}}=\nabla_{T_{e}}X_{e}^{\perp_{e}}.
\end{align}
Using this relation,  it is easy to see that  \eqref{eq:hess2} follows from  \eqref{eq:hess}. 
\end{proof}

We shall derive another expression of the Hessian for a geodesic map which will be used in the proof of our main theorem.

\begin{lemma}\label{lem:hess3}
Suppose $f$ is either a minimal geodesic immersion with $p=1$ or a $p$-harmonic map with $p\geq 2$. Then, we have
\begin{align}\label{eq:hess3}
&\Hess_f^p(V,W)=\sum_{e\in E} m_E(e)\int_0^1 g(\mathcal{J}_{f,e}^p(V_e), W_e)\,dt+2\sum_{x\in V}\sum_{e\in E_x}m_E(e)g(\mathcal{B}_{f,e}^{p}(V_e),W_e)_x,
\end{align}
where $\mathcal{J}_{f,e}^{p}$ and $\mathcal{B}_{f,e}^{p}$ are defined as follows:
\begin{align*}
%\label{eq:jac2}
\mathcal{J}_{f,e}^p(V_e)&:=
\{-\nabla_{T_e}\nabla_{T_e}V_e-R(V_e, T_e)T_e-(p-2)(\nabla_{T_e}\nabla_{T_e}V_e)^{\top_e}\} \|T_e\|^{p-2}. \\
%\label{eq:bdr}
\mathcal{B}_{f,e}^{p}(V_{e})&:=\{-\nabla_{T_{e}}V_{e}-(p-2)(\nabla_{T_{e}}V_{e})^{\top_{e}}\}\|T_{e}\|^{p-2}.
\end{align*}
\end{lemma}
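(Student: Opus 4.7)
The plan is to integrate by parts in the formula \eqref{eq:hess} of Proposition \ref{lem:svf}, moving both covariant derivatives off $V_e$ to produce the interior operator $\mathcal{J}_{f,e}^p$ together with vertex contributions that will combine into $\mathcal{B}_{f,e}^p$. The assumption that $f$ is a geodesic map on each edge (for $p=1$ by hypothesis, for $p\geq 2$ by Proposition \ref{prop:pharm}(2)) is used in two crucial places: the speed $\|T_e\|$ is constant along each $e$, so $\|T_e\|^{p-2}$ passes freely through $\partial_t$; and the identity $\nabla_{T_e}T_e=0$ means that $\nabla_{T_e}$ commutes with the tangential projection, i.e.\ $(\nabla_{T_e}X_e)^{\top_e}=\nabla_{T_e}X_e^{\top_e}$ as in \eqref{eq:svf15}. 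In particular, $(\nabla_{T_e}V_e)^{\top_e}$ is a scalar multiple of $T_e$, so $g((\nabla_{T_e}V_e)^{\top_e},W_e)=g((\nabla_{T_e}V_e)^{\top_e},W_e^{\top_e})$, and the iterated derivative satisfies $\nabla_{T_e}(\nabla_{T_e}V_e)^{\top_e}=(\nabla_{T_e}\nabla_{T_e}V_e)^{\top_e}$.

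Applying the fundamental theorem of calculus to $\partial_t g(\nabla_{T_e}V_e,W_e)$ and to $\partial_t g((\nabla_{T_e}V_e)^{\top_e},W_e)$ and substituting into \eqref{eq:hess} separates the Hessian into an interior integral and a boundary sum. The interior integrand collects into
\[
-g\bigl(\nabla_{T_e}\nabla_{T_e}V_e+(p-2)(\nabla_{T_e}\nabla_{T_e}V_e)^{\top_e}+R(V_e,T_e)T_e,\;W_e\bigr)\|T_e\|^{p-2},
\]
which is precisely $g(\mathcal{J}_{f,e}^p(V_e),W_e)$. This yields the first sum on the right-hand side of \eqref{eq:hess3}.

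For the boundary sum, I would use the edge-reversal conventions $V_e(1)=V_{\overline e}(0)$, $T_e(1)=-T_{\overline e}(0)$, and $m_E(e)=m_E(\overline e)$. The sign flip in $T_e$ forces $\nabla_{T_e}V_e\big|_1=-\nabla_{T_{\overline e}}V_{\overline e}\big|_0$ and, since the tangential projection is a scalar of $T_e$, also $(\nabla_{T_e}V_e)^{\top_e}\big|_1=-(\nabla_{T_{\overline e}}V_{\overline e})^{\top_{\overline e}}\big|_0$. Therefore, after relabeling $\overline e\mapsto e$, the $t=1$ contributions of each kinetic term double the $t=0$ contributions with an overall minus sign, so the total boundary sum equals $-2\sum_{e}m_E(e)\|T_e\|^{p-2}g\bigl(\nabla_{T_e}V_e+(p-2)(\nabla_{T_e}V_e)^{\top_e},\,W_e\bigr)_{o(e)}$. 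Regrouping over vertices and noting that $W_e(x)=W(x)$ is a single vector independent of $e\in E_x$ (which legitimizes writing $W_e$ rather than $W_e^{\top_e}$ at $x$) produces exactly $2\sum_{x\in V}\sum_{e\in E_x}m_E(e)g(\mathcal{B}_{f,e}^p(V_e),W_e)_x$.

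The main obstacle is purely bookkeeping: keeping the tangential-projection factors aligned under the sign flip at $t=1$, and verifying that one may replace $W_e^{\top_e}$ by $W_e$ at a vertex even though the tangent lines of distinct edges $e\in E_x$ generally differ. Both issues are resolved by the observation that $(\nabla_{T_e}V_e)^{\top_e}$ and $(\nabla_{T_e}\nabla_{T_e}V_e)^{\top_e}$ are multiples of $T_e$, so they pair with $W_e$ through its $T_e$-component only, and this pairing transforms correctly under $T_e\mapsto -T_{\overline e}$.
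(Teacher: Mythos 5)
Your proposal is correct and follows essentially the same route as the paper: integrate by parts in \eqref{eq:hess}, use the geodesic property to get $\nabla_{T_e}(\nabla_{T_e}V_e)^{\top_e}=(\nabla_{T_e}\nabla_{T_e}V_e)^{\top_e}$ and to keep $\|T_e\|^{p-2}$ constant, and convert the edge boundary terms into vertex sums via the reversal conventions. Your extra care about replacing $W_e^{\top_e}$ by $W_e$ at the vertices matches what the paper does implicitly in \eqref{eq:svf4}.
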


\begin{proof}

We shall compute the first and the last  integrals in the Hessian formula \eqref{eq:hess}.  Note that $\|T_{e}\|$ is constant along each $e\in E$ since we assume $f$ is a geodesic map.  Then, by using the integration by parts, we have
\begin{align}\label{eq:svf3}
&\sum_{e\in E}m_E(e)\int_0^1 g(\nabla_{T_e}V_e,\nabla_{T_e}W_e)\|T_e\|^{p-2} \,dt\\
&=\sum_{e\in E}m_E(e)\Big{[}g(\nabla_{T_e}V_e, W_e)\cdot \|T_e\|^{p-2}\Big]_{t=0}^{t=1}
-\sum_{e\in E}m_E(e)\int_0^1g(\nabla_{T_e}\nabla_{T_e}V_e,W_e)\cdot \|T_e\|^{p-2} \,dt\nonumber\\
&=-2\sum_{x\in V}\sum_{e\in E_x}m_E(e)g(\nabla_{T_e}V_e,W_e)_x\cdot \|T_e\|^{p-2}
-\sum_{e\in E}m_E(e)\int_0^1g(\nabla_{T_e} \nabla_{T_e}V_e,W_e)\cdot \|T_e\|^{p-2}\,dt.\nonumber
\end{align}

Similar computation shows that 
\begin{align}\label{eq:svf4}
&\sum_{e\in E}m_E(e)\int_0^1 g((\nabla_{T_e}V_e)^{\top_e},(\nabla_{T_e}W_e)^{\top_e})\|T_e\|^{p-2} \,dt\\
%=\sum_{e\in E}m_E(e)\int_0^1 g((\|T_e\|^{p-2}\nabla_{T_e}V_e)^{\top_e},\nabla_{T_e}W_e) \,dt\\
&=-2\sum_{x\in V}\sum_{e\in E_x}m_E(e)g((\nabla_{T_e}V_e)^{\top_{e}},W_e)_x\cdot \|T_e\|^{p-2}-\sum_{e\in E}m_E(e)\int_0^1g(\nabla_{T_e}(\nabla_{T_e}V_e)^{\top_e},W_e)\cdot \|T_e\|^{p-2}\,dt.\nonumber
\end{align}
 Here, we have that
\begin{align}\label{eq:svf5}
\nabla_{T_e}(\nabla_{T_e}V_e)^{\top_e}=(\nabla_{T_e}\nabla_{T_e}V_e)^{\top_e}.
\end{align}
Indeed, if $\|T_e\|\equiv 0$ (note that this occurs only when $p\geq 2$), then \eqref{eq:svf5} is obvious. Suppose $\|T_{e}\|\not\equiv 0$, and put $\widetilde{T}_e:=\|T_e\|^{-1}{T_e}$. Then, it holds that 
$
\nabla_{T_e}\widetilde{T}_e=0
$
since $f$ is a geodesic map. Therefore, we see
\begin{align*}
\nabla_{T_e}(\nabla_{T_{e}}V_e)^{\top_e}=\nabla_{T_e}\{g(\nabla_{T_{e}}V_e, \widetilde{T}_e)\widetilde{T}_e\}=g(\nabla_{T_e}\nabla_{T_{e}}V_e, \widetilde{T}_e)\widetilde{T}_e=(\nabla_{T_e}\nabla_{T_{e}}V_e)^{\top_e}
\end{align*}
as required.

Thus, substituting \eqref{eq:svf3}--\eqref{eq:svf5} to the Hessian formula \eqref{eq:hess}, we obtain the formula \eqref{eq:hess3}.
\end{proof}

Notice that, if $p=2$, $\mathcal{J}_{f,e}^{2}$ coincides with the classical Jacobi operator for the map $f_{e}:[0,1]\to M$. It should be noted that, in our discrete setting, the Hessian formula \eqref{eq:hess3} involves the boundary term in general. However, there is a reasonable subset of variational vector fields along $f$ to simplify the Hessian formula \eqref{eq:hess3}:  For a discrete map $f: (X, m_{E})\to (M,g)$, we put
\begin{align}\label{def:balv}
\Gamma_{bal}(f^{-1}TM):=\Big{\{} V\in \Gamma(f^{-1}TM)\mid \sum_{e\in E_x}m_E(e)\nabla_{T_e}V_e(x)=0\,\,\textup{for any vertex $x$}\Big{\}}.
\end{align}
{Geometrically, $\Gamma_{bal}(f^{-1}TM)$ is regarded as the tangent space at $f$ of the space of piecewise $C^{\infty}$ maps satisfying the $2$-balanced condition (cf. \cite[Lemma B.2]{KT}).}  If $V\in \Gamma_{bal}(f^{-1}TM)$, then we have
\begin{align*}%\label{eq:bv}
\sum_{x\in V}\sum_{e\in E_x}m_E(e)g(\mathcal{B}_{f,e}^{2}(V_e),W_e)_x=0
\end{align*}
for any $W\in \Gamma(f^{-1}TM)$ since $W_{e}(x)$ does not depend on $e$.  Thus, if this is the case,  the Hessian formula \eqref{eq:hess2} for the $2$-harmonic map can be written by
\[
\Hess_f^2(V,W)=\sum_{e\in E} m_E(e)\int_0^1 g(\mathcal{J}_{f,e}^2(V_e), W_e)\,dt.
\]

\subsection{Stabilities}

A critical point $f$ of ${\bf E}^{p}$ is said to be {\it stable}  if $\frac{d^2}{ds^2} {\bf E}^p(f_s){|}_{s=0}\geq 0$ for any piecewise smooth deformation $\{f_{s}\}_{s\in (-\epsilon, \epsilon)}$ of $f=f_{0}$. If $f$ is either a discrete minimal immersion or a $p$-harmonic map, the stability is equivalent to $\Hess_f^p(V,V)\geq 0$ for any $V\in \Gamma(f^{-1}TM)$.  

The following theorem is an immediate consequence of the second variational formula \eqref{eq:svf0}.

\begin{theorem}\label{thm:sta}
Let $f: (X, m_{E})\to (M,g)$ be either a discrete minimal immersion or a discrete $p$-harmonic map. If the sectional curvature of $(M,g)$ is non-positive, then $f$ is stable.
\end{theorem}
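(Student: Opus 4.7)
The plan is to read off the stability directly from the second variation formula \eqref{eq:svf0}, since no averaging argument or spectral theory is needed: the integrand is pointwise non-negative once the sign of the sectional curvature is used.

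First, I would treat the curvature term. For each edge $e$, write $K(V_e,T_e)$ for the sectional curvature of the plane spanned by $V_e$ and $T_e$ whenever these are linearly independent. Then
\[
g(R(V_e,T_e)T_e,V_e)=K(V_e,T_e)\bigl(\|V_e\|^2\|T_e\|^2-g(V_e,T_e)^2\bigr),
\]
and the Cauchy--Schwarz factor is non-negative. Hence the assumption that the sectional curvature of $(M,g)$ is non-positive gives $-g(R(V_e,T_e)T_e,V_e)\geq 0$ on each edge, at every point, for every $V\in\Gamma(f^{-1}TM)$ (with equality when $V_e$ and $T_e$ are proportional, in which case $R(V_e,T_e)T_e=0$ anyway).

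Next, I would handle the kinetic terms, splitting on $p$. For $p\geq 2$, both $\|\nabla_{T_e}V_e\|^2$ and $(p-2)\|(\nabla_{T_e}V_e)^{\top_e}\|^2$ are non-negative, and the weight $\|T_e\|^{p-2}$ is non-negative as well. For $p=1$, the coefficient $p-2=-1$ is negative, so I would combine the two kinetic terms using the orthogonal decomposition
\[
\|\nabla_{T_e}V_e\|^2=\|(\nabla_{T_e}V_e)^{\top_e}\|^2+\|(\nabla_{T_e}V_e)^{\perp_e}\|^2,
\]
which reduces $\|\nabla_{T_e}V_e\|^2+(p-2)\|(\nabla_{T_e}V_e)^{\top_e}\|^2$ to $\|(\nabla_{T_e}V_e)^{\perp_e}\|^2\geq 0$. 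The weight $\|T_e\|^{p-2}=\|T_e\|^{-1}$ is well-defined and positive because for $p=1$ the definition requires $f$ to be an immersion, so $T_e\neq 0$ along each edge.

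Putting these observations together, each summand of \eqref{eq:svf0} is a non-negative integrand multiplied by a non-negative weight $m_E(e)$, so
\[
\frac{d^2}{ds^2}{\bf E}^p(f_s)\Big|_{s=0}\geq 0
\]
for every piecewise smooth deformation $\{f_s\}$. This is exactly the stability of $f$. The argument is essentially computational, and there is no real obstacle; the only point requiring attention is the sign issue in the $p=1$ case, which is resolved cleanly by the orthogonal decomposition above, together with the immersion hypothesis that guarantees $\|T_e\|^{-1}$ makes sense.
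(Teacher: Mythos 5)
Your proposal is correct and takes essentially the same approach as the paper, which states Theorem \ref{thm:sta} as an immediate consequence of the second variation formula \eqref{eq:svf0}: one reads off pointwise non-negativity of the integrand from the curvature sign and the kinetic terms exactly as you do. The orthogonal decomposition you invoke to handle the $p=1$ sign issue is precisely the content of the paper's Lemma \ref{lem:hess2}, so no new idea is needed.
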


In contrast to this fact, if the sectional curvature of $(M,g)$ is not always non-positive, the stability is a non-trivial property.

We mention some useful facts on the stability of discrete maps. 
Firstly, if $f$ is a discrete minimal immersion, then $f$ is stable if and only if the associated minimal geodesic immersion $f_{geo}$ is stable.  This is because the weighted length ${\bf L}(f)={\bf E}^{1}(f)$ is independent of the choice of parameter of $f_{e}$. 

Secondly, we recall that any minimal geodesic immersion $f:(X,m_E)\to (M,g)$ yields a $p$-harmonic immersion $f^p:(X,m_E^p)\to (M,g)$ by letting $m_E^p(e):=m_E(e)\|T_e\|^{1-p}$ for each $e\in E$ (see Proposition \ref{prop:corr}). Then, we have the following fact.

 \begin{proposition}\label{prop:key}
Let $f: (X, m_{E})\to (M,g)$ be a  minimal geodesic immersion. If $f$ is stable with respect to ${\bf E}^1={\bf L}$, then the associated $p$-harmonic immersion $f^p: (X,m_E^p)\to (M,g)$ is also stable with respect to ${\bf E}^p$.  

In particular, if every non-constant $p$-harmonic map into $(M,g)$ is unstable for some $p\geq 2$, then there is no stable discrete minimal immersion into $(M,g)$.
\end{proposition}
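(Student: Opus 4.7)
The plan is to compare the two Hessians directly using the compact expression from Lemma~\ref{lem:hess2}. Since $f$ is a geodesic immersion, $\|T_e\|$ is constant on each edge, and the rescaled weight satisfies the identity $m_E^p(e)\|T_e\|^{p-2} = m_E(e)\|T_e\|^{-1}$. This is the key algebraic observation: the new weight $m_E^p$ precisely absorbs the $\|T_e\|^{p-2}$ factor that distinguishes the $p$-energy Hessian from the length Hessian.

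With this in hand, I would apply Lemma~\ref{lem:hess2} both to $f:(X,m_E)\to(M,g)$ with $p=1$ and to $f^p:(X,m_E^p)\to(M,g)$ for $p\geq 2$, evaluated at the same $V\in\Gamma(f^{-1}TM)$. After substituting the weight identity, both expressions share the common factor $m_E(e)\|T_e\|^{-1}$, and one obtains
\[
\Hess_{f^p}^p(V,V) \;=\; \Hess_f^1(V,V) \;+\; (p-1)\sum_{e\in E} m_E(e)\int_0^1 \|\nabla_{T_e} V_e^{\top_e}\|^2\, \|T_e\|^{-1}\,dt.
\]
The stability hypothesis on $f$ with respect to ${\bf L}$ gives $\Hess_f^1(V,V)\geq 0$, and the additional tangential term is manifestly non-negative since $p\geq 2$. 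Hence $\Hess_{f^p}^p(V,V)\geq 0$ for every $V\in\Gamma(f^{-1}TM)$, which is exactly the stability of $f^p$.

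For the ``in particular'' statement, I would argue by contradiction. Suppose there exists a stable discrete minimal immersion $f:(X,m_E)\to(M,g)$. Reparametrizing each edge by arc length (as described before Proposition~\ref{prop:corr}), one obtains a minimal geodesic immersion $f_{geo}$; since ${\bf L}$ is parametrization-invariant, $f_{geo}$ is still stable (as already noted at the start of the subsection on stabilities). Applying the first part of the proposition, the associated $p$-harmonic immersion $f_{geo}^p$ is stable, and it is non-constant because $f_{geo}$ is an immersion. This contradicts the hypothesis that every non-constant $p$-harmonic map into $(M,g)$ is unstable.

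I do not anticipate a substantial obstacle: the result is essentially an algebraic identity, hinging on the exact matching between the weight rescaling $m_E^p(e)=m_E(e)\|T_e\|^{1-p}$ introduced in \eqref{eq:weight} and the $\|T_e\|^{p-2}$ integrand of the $p$-energy Hessian. The only mildly delicate point is the observation, encoded in Lemma~\ref{lem:hess2}, that both Hessians are computed edgewise from the normal and tangential projections $V_e^{\perp_e}$ and $V_e^{\top_e}$ separately, so that the comparison is meaningful despite the fact noted in Remark~\ref{rem:vf} that these projections need not assemble into global sections of $f^{-1}TM$.
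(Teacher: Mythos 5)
Your proposal is correct and follows essentially the same route as the paper: it invokes Lemma \ref{lem:hess2}, exploits the constancy of $\|T_e\|$ on each edge together with the weight identity $m_E^p(e)\|T_e\|^{p-2}=m_E(e)\|T_e\|^{-1}$, and arrives at exactly the comparison formula \eqref{eq:hessrela} that the paper uses. The deduction of the ``in particular'' statement via reparametrization to $f_{geo}$ is also the intended argument.
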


 \begin{proof}
By Lemma \ref{lem:hess2}, we have 
 \begin{align}\label{eq:svf01}
\Hess_{{f}}^p(V,V)=\sum_{e\in E}m_E(e)\int_0^1 \Big{\{}\|\nabla_{T_e}V_e^{\perp_e}\|^2-g(R(V_e^{\perp_{e}}, T_e)T_e, V_e^{\perp_{e}})+(p-1)\|\nabla_{T_e}V_e^{\top_{e}}\|^2\Big{\}}\|T_e\|^{p-2}\,dt,
\end{align}
where $f$ is either a discrete minimal immersion with $p=1$ or a discrete $p$-harmonic map with $p\geq 2$.

Now, we suppose $f:(X,m_{E})\to (M,g)$ is a minimal geodesic immersion and $f^p: (X, m_E^p)\to (M,g)$ is the associated $p$-harmonic immersion. Note that $\|T_{e}\|=\|\dot{f}_{e}\|$ is constant for each $e\in E$ since $f$ is a geodesic map. Moreover, by construction of $f^p$, we have $\|T_{e}\|=\|\dot{f}_{e}\|=\|\dot{f^p_e}\|$ for any $e$ and $\Gamma(f^{-1}TM)=\Gamma((f^{p})^{-1}TM)$. Then, the formula \eqref{eq:svf01} shows that
\begin{align}\label{eq:hessrela}
\Hess_{f^p}^p(V,V)
&=\Hess_{f}^{1}(V,V)+(p-1)\sum_{e\in E}m_{E}(e)\int_{0}^{1}\|\nabla_{T_{e}}{V}_{e}^{\top_{e}}\|^{2}\|T_{e}\|^{-1}\,dt.
\end{align}
  In particular, it holds that 
 \[\Hess_{f^p}^p(V,V)\geq \Hess_{f}^{1}(V,V)\]
  for any  $V\in \Gamma(f^{-1}TM)$, and the equality holds if and only if $\nabla_{T_e}V_e^{\top_e}\equiv 0$ for each $e\in E$. Therefore,  if $f$ is stable then so is $f^p$. 
  \end{proof}
  
  \begin{remark}\label{rem:key}
 {\rm
  More generally, if $f: (X,m_{E})\to (M,g)$ is a $p'$-harmonic immersion, then we obtain a $p$-harmonic immersion $f^{p}: (X,m_{E}^{p})\to (M,g)$ by letting $m_{E}^{p}(e):=m_{E}(e)\|T_{e}\|^{p'-p}$ for any integer $p\geq 2$. Moreover, a similar argument shows that we have  ${\rm Hess}_{f^{p}}^{p}(V,V)\geq {\rm Hess}_{f}^{p'}(V,V)$ if $p\geq p'$. Therefore, the stability of $f$ implies the stability of $f^{p}$ for any $p\geq p'$.
   }
  \end{remark}

 \begin{remark}
 {\rm 
A corresponding argument  for smooth harmonic maps was given in \cite[Lemma 7.1]{OhU2}. More precisely,  it was proved  in \cite{OhU2} that {\it if $F:N\to M$ is a totally geodesic isometric immersion between smooth compact Riemannian manifolds $N, M$,  then $F$ is stable as a harmonic map if and only if $F$ is stable as a minimal immersion and the identity map ${\rm id}_{N}$ is stable}. This fact follows from the following splitting of the Jacobi operator:
 \begin{align}\label{eq:Jacrela}
 J_{F}(V)=J_{F}^{{min}}(V^{\perp})+J_{{\rm id}_{N}}(V^{\top}).
 \end{align}
 where $J_{F}$, $J_{{\rm id_{N}}}$ and $J_{F}^{min}$ are the Jacobi operator of the harmonic map $F$, ${\rm id}_{N}$ and the minimal immersion $F$, respectively, and $V^{\top}$ (resp. $V^{\perp}$) denotes the tangential (resp. normal) vector field of $V$ along $F$. In particular,  for any normal vector field $V=V^{\perp}$ along $F$, we have  $J_{F}^{{min}}(V^{\perp})=J_{F}(V)$, and this implies that,  if $F$ is a stable harmonic map, then $F$ is also stable as a minimal immersion.
  
 We remark that, although the equation \eqref{eq:hessrela} is an analogous formula of \eqref{eq:Jacrela}, we cannot use the same logic for the discrete map $f$ since $\{V_{e}^{\perp_{e}}\}_{e\in E}$ does not define a ``normal vector field along $f$'' (i.e. it possibly takes a multivalue at some vertex, and hence, we do not obtain a well-defined element in $\Gamma(f^{-1}TM)$ in general. See Remark \ref{rem:vf}). It would be an interesting problem to ask whether the converse statement of Proposition \ref{prop:key} holds or not. 
  }
 \end{remark}

\section{Non-existence of stable discrete maps I}\label{sec:nonex1}
In the following, we consider the following properties of a Riemannian manifold $(M,g)$:

\begin{itemize}
\item[$({\bf N}_1)$] {\it There is no stable discrete minimal immersion from a finite weighted graph $(X, m_{E})$ into $(M,g)$.}
\item[$({\bf N}_p)$] {\it There is no non-constant stable discrete $p$-harmonic map from a finite weighted graph $(X, m_{E})$ into $(M,g)$.
}
\end{itemize}

Note that if $(M,g)$ has property $({\bf N}_1)$ (or $({\bf N}_p)$), then $M$ must be simply-connected because there is a stable closed geodesic if $\pi_{1}(M)\neq \{0\}$ by the classical result. Moreover, by Proposition \ref{prop:key} and Remark \ref{rem:key}, it holds that 
$({\bf N}_p)\, \Longrightarrow \, ({\bf N}_{p'})$ for any $1\leq p'\leq p$.

In this section, we apply the averaging method used in \cite{BBBR, OhU} to our discrete maps. As a result, we prove the following non-existence theorem.
 \begin{theorem}\label{thm:unst2}
Let $(M,J,g)$ be a simply-connected compact homogeneous K\"ahler manifold of positive holomorphic sectional curvature. Then $(M,g)$ has properties $({\bf N}_{1})$ and  $({\bf N}_{2})$.
 \end{theorem}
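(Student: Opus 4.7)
Proof plan:

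The plan is to prove property $({\bf N}_{2})$ first; property $({\bf N}_{1})$ then follows by Proposition~\ref{prop:key}. Let $f : (X, m_{E}) \to (M, g)$ be a non-constant discrete $2$-harmonic map. Since $f$ is $2$-harmonic, each $f_{e}$ is a geodesic, $T_{e} := \dot f_{e}$ is parallel along $e$, and the $2$-balanced condition $\sum_{e \in E_{v}} m_{E}(e) T_{e}(v) = 0$ holds at every vertex $v$. The aim is to produce a variation with strictly negative Hessian, adapting the averaging strategy of~\cite{BBBR, OhU} from smooth Riemann-surface sources to discrete graph sources.

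First I would fix a compact connected Lie group $G$ acting transitively on $M$ by holomorphic isometries, equipped with an $\mathrm{Ad}$-invariant inner product on $\fg$ realizing the normal-homogeneous metric on $M$. For each $X \in \fg$, viewed as a Killing vector field on $M$, the variation $V_{X} := JX \circ f$ defines an element of $\Gamma(f^{-1}TM)$ (continuous at vertices, since $JX$ is a global vector field on $M$). Using $\nabla J = 0$ gives $\|\nabla_{T_{e}} V_{X}\|^{2} = \|\nabla_{T_{e}} X\|^{2}$. Since $X$ is Killing and $f_{e}$ is a geodesic, $X|_{f_{e}}$ satisfies the Jacobi equation $\nabla_{T_{e}}^{2} X = -R(X, T_{e}) T_{e}$, so integration by parts on each edge yields
\[
\int_{0}^{1} \|\nabla_{T_{e}} X\|^{2}\, dt = \bigl[g(\nabla_{T_{e}} X, X)\bigr]_{0}^{1} + \int_{0}^{1} g(R(X, T_{e}) T_{e}, X)\, dt.
\]
Summing over $e \in E$, the boundary terms rearrange---using $T_{\overline{e}}(0) = -T_{e}(1)$, $m_{E}(\overline{e}) = m_{E}(e)$, and the fact that $\nabla X$ is a linear endomorphism of each tangent space---into $-2 \sum_{v \in V} g\bigl((\nabla X)(\sum_{e \in E_{v}} m_{E}(e) T_{e}(v)), X(v)\bigr)$, which vanishes by the $2$-balanced condition. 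Combined with the formula for $\Hess_{f}^{2}$ coming from Proposition~\ref{lem:svf}, this gives the clean expression
\[
\Hess_{f}^{2}(V_{X}, V_{X}) = \sum_{e \in E} m_{E}(e) \int_{0}^{1} \bigl\{g(R(X, T_{e}) T_{e}, X) - g(R(JX, T_{e}) T_{e}, JX)\bigr\}\, dt.
\]

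The final step is a pointwise Kähler curvature estimate. The motivation is that for $X$ chosen so that $X(p) = T_{e}(t)$ at a single point $p = f_{e}(t)$ where $T := T_{e}(t) \neq 0$, the integrand at $p$ equals $-R(JT, T, T, JT) = -H(p, T)\,|T|^{4} < 0$ by the positive holomorphic sectional curvature hypothesis ($H$ denoting HSC). To turn this pointwise negativity into a negative integral, I would invoke the averaging trick of~\cite{BBBR, OhU}: a weighted sum of the Hessians $\Hess_{f}^{2}(V_{X}, V_{X})$ over a basis of $\fg$ adapted to the decomposition $\fg = \fk \oplus \fm$ and to the realization of $J$ on $\fm \cong T_{o}M$ as $\mathrm{ad}(Z)$ for the central element $Z \in \fk$ of the Kähler $C$-space. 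The main obstacle will be identifying the correct weighting, since a naive orthonormal-basis average over $\fg$ vanishes identically: by the normal-homogeneous averaging identity one computes $\sum_{i} g(R(X_{i}, T) T, X_{i}) = \sum_{i} g(R(J X_{i}, T) T, J X_{i}) = \mathrm{Ric}(T, T)$. The refined Kähler-adapted average plays here the role that the $\bar\partial$-decomposition plays in the smooth case, with the statement ``$T_{e} = 0$ for every $e$'' replacing ``$\bar\partial f = 0$''. Once the right weighting is identified, the non-constancy of $f$ guarantees that some $T_{e}$ is nonzero on a set of positive measure, producing the strictly negative Hessian sum that contradicts stability.
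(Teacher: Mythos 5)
Your setup matches the paper's: the same variation $JV^{*}$ by a Killing field composed with $J$, the same computation reducing $\Hess_{f}^{2}(JV^{*},JV^{*})$ to $\sum_{e}m_{E}(e)\int_{0}^{1}\{g(R(V^{*},T_{e})T_{e},V^{*})-g(R(JV^{*},T_{e})T_{e},JV^{*})\}\,dt$, and the same observation that the trace over $\fg$ vanishes. The genuine gap is in your final step. You propose to find a ``refined K\"ahler-adapted weighting'' of these Hessians whose sum is strictly negative. No such universal weighting exists: writing $q_{w}(X)$ for the pointwise integrand, one has $q_{w}(J_{w}X)=-q_{w}(X)$, so the sign of each individual term varies with the point $w\in f(e)$ and the edge, and no fixed linear combination over a basis of $\fg$ can be made negative for every non-constant $f$. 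The correct mechanism (the one the paper, and \cite{BBBR,OhU}, actually use) extracts the contradiction from the \emph{equality case}: since the trace is zero and stability forces each term $\Hess_{f}^{2}(JV^{*},JV^{*})\geq 0$, every term vanishes; positive semi-definiteness of $\Hess_{f}^{2}$ plus Cauchy--Schwarz then gives $\Hess_{f}^{2}(JV^{*},W)=0$ for \emph{all} $W\in\Gamma(f^{-1}TM)$; taking $W$ supported in a single edge yields the pointwise equation $JR(V_{e}^{*},T_{e})T_{e}=R(JV_{e}^{*},T_{e})T_{e}$ along every edge; and choosing $V$ with $V^{*}(w)=T_{e}(w)$ gives $R(JT_{e},T_{e})T_{e}=0$, contradicting positive holomorphic sectional curvature unless $T_{e}(w)=0$ everywhere. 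This also repairs the other weak spot in your plan: a single Killing field with $X(p)=T_{e}(t)$ only controls the integrand at one point, not along the whole graph, so it cannot by itself produce a negative Hessian.

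A secondary inaccuracy: you equip $\fg$ with an $\mathrm{Ad}$-invariant inner product ``realizing the normal-homogeneous metric'' and justify the vanishing of the naive average via $\sum_{i}g(R(X_{i},T)T,X_{i})=\mathrm{Ric}(T,T)$. The K\"ahler metric on a K\"ahler $C$-space is in general \emph{not} normal homogeneous, and the paper explicitly allows $g$ to be unrelated to $\langle\,,\rangle_{\fg}$. The trace still vanishes, but for a different reason: $J_{w}$ is skew-symmetric with respect to any $\mathrm{Ad}(G)$-invariant inner product (Lemma \ref{lem:key12}), so one can pick an orthonormal basis of $\fm_{w}$ of the form $\{X_{i},J_{w}X_{i}\}$ and the terms cancel in pairs via $q_{w}(J_{w}X_{i})=-q_{w}(X_{i})$, with the $\fk_{w}$-directions contributing nothing.
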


Before starting the proof, we prove a lemma which is easy to see, but will be a key in our proof.
\begin{lemma}\label{lem:key1}
Let $f: (X,m_E)\to (M,g)$ be a discrete map satisfying the $2$-balanced condition (not necessarily a critical point of ${\bf E}^{2}$). If $V$ is a Killing vector field on $M$, then $f^{-1}{V}\in \Gamma_{bal}(f^{-1}TM)$.
 \end{lemma}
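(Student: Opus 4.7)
The plan is to reduce the sum defining the $2$-balancedness of $f^{-1}V$ at each vertex to the $2$-balancedness of the tangent vectors $\{T_e(x)\}_{e\in E_x}$, by exploiting the tensorial nature of the covariant derivative in the direction slot. The Killing hypothesis itself will turn out to play no role in the algebraic identity we need; it is relevant only for the application (averaging over the isometry group), and I will remark on this at the end.

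The key observation is the following: for any smooth vector field $V$ on $M$, the assignment $W\mapsto \nabla_{W}V$ is $C^{\infty}(M)$-linear in $W$, i.e.\ it is a $(1,1)$-tensor field $\nabla V$. In particular, for each edge $e\in E_{x}$, the pulled-back vector field $(f^{-1}V)_{e}=V\circ f_{e}$ satisfies
\begin{equation*}
\nabla_{T_{e}}(V\circ f_{e})(x) \;=\; (\nabla V)_{f(x)}\bigl(T_{e}(x)\bigr),
\end{equation*}
where the right-hand side depends only on the vector $T_{e}(x)\in T_{f(x)}M$ and the value of the tensor $\nabla V$ at the point $f(x)$, not on the way the curve $f_{e}$ extends away from $x$.

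I would then sum over $e\in E_{x}$ with the weights $m_{E}(e)$ and pull the linear endomorphism $(\nabla V)_{f(x)}$ out of the sum:
\begin{equation*}
\sum_{e\in E_{x}} m_{E}(e)\,\nabla_{T_{e}}(V\circ f_{e})(x) \;=\; (\nabla V)_{f(x)}\!\left(\sum_{e\in E_{x}} m_{E}(e)\,T_{e}(x)\right).
\end{equation*}
The $2$-balanced condition assumed on $f$ says that the inner sum vanishes, so the whole expression vanishes for every vertex $x$. Thus $f^{-1}V \in \Gamma_{bal}(f^{-1}TM)$, as required.

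There is no real obstacle here: the argument is a one-line consequence of tensoriality of $\nabla$ in its direction slot. I would merely note that the Killing assumption on $V$ is not used in the proof of this particular lemma; it is stated presumably because $f^{-1}V$ will later be used as a variational vector field obtained by averaging Killing fields on a homogeneous target, where being Killing is what ensures useful geometric properties of the resulting variation (for instance, an infinitesimal isometric deformation of $f$).
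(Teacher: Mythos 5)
Your proof is correct, and it takes a slightly but genuinely different route from the paper's. The paper fixes an arbitrary test vector $U\in T_{f(x)}M$ and uses the Killing identity $g(\nabla_{T_e}V,U)+g(\nabla_{U}V,T_e)=0$ to move the $e$-dependence into the second slot of the metric, after which the $2$-balanced condition kills the sum; you instead invoke directly the tensoriality of $W\mapsto\nabla_{W}V$ in the direction slot and pull the endomorphism $(\nabla V)_{f(x)}$ out of the weighted sum. Both arguments ultimately rest on the same fact --- that $\nabla_{T_e}V(x)$ depends linearly on $T_e(x)$ alone --- but your version makes this explicit and, as you correctly observe, establishes the stronger statement that $f^{-1}V\in\Gamma_{bal}(f^{-1}TM)$ for \emph{any} smooth vector field $V$ on $M$, Killing or not. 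The Killing hypothesis in the lemma is thus not needed for the conclusion as stated; it matters only downstream, where $V$ ranges over the Lie algebra of the isometry group in the averaging argument (and where, e.g., the identity $(\nabla^2V)(Z,W)+R(V,Z)W=0$ for Killing fields is genuinely used). Your remark to that effect is accurate and worth keeping.
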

\begin{proof}
We fix an arbitrary vertex $x$, and  take any tangent vector $U\in T_xM$.
Since ${V}$ is a Killing vector field on $M$, we have $g(\nabla_X{V},Y)+g(\nabla_Y{V},X)=0$ for any $X, Y\in \Gamma(TM)$. Thus, we see
\begin{align*}
g\Big(\sum_{e\in E_x}m_E(e)\nabla_{T_e}{V}, U\Big)_x&=-\sum_{e\in E_x}m_E(e)g(\nabla_{U}{V}, T_e)_x\\
&=-g\Big(\nabla_{U}{V}, \sum_{e\in E_x} m_E(e)T_e\Big)_x=0
\end{align*}
due to the $2$-balanced condition, where we used the fact that $\nabla_{U}V(x)$ does not depend on $e\in E_x$. Since $U$ is arbitrary,  this shows $f^{-1}{V}\in \Gamma_{bal}(f^{-1}TM)$.
\end{proof}

Next, we recall a basic fact on a simply-connected compact homogeneous K\"ahler manifold $M$. It is well-known that $M$ is isomorphic to an adjoint orbit of a connected compact Lie group $G$, equipped with the canonical complex structure $J$ and a $G$-invariant K\"ahler metric $g$ on the orbit (see  \cite[Theorem 8.89]{Besse}). Thus, we assume $M$ is an adjoint orbit of $G$ in the following.  We will use a property of the canonical complex structure $J$, and thus, we briefly recall the definition of $J$. We refer to \cite[Section 8.B]{Besse} for details. 

We denote the Lie algebra of $G$  by $\fg$ and fix an ${\rm Ad}(G)$-invariant inner product $\langle,\rangle_{\fg}$ on $\fg$. 
Let $M$ be an adjoint orbit in $\fg$, and take an arbitrary point $w\in M\subset \fg$ so that $M={\rm Ad}(G)w=\{{\rm Ad}(g)w\mid g\in G\}$. We always assume $w\neq 0$. We denote the isotropy subgroup of $G$ at $w$ by $K_{w}$ and write its Lie algebra by $\fk_{w}$. Let $\fm_{w}$ be the orthogonal complement of $\fk_{w}$ in $\fg$. Note that $\fm_{w}$ is identified with the tangent space $T_{w}M$ via the correspondence $X\mapsto X^{*}_{w}=\frac{d}{dt}{\rm Ad}({\rm exp}(tX))w|_{t=0}$. 
%Moreover, the decomposition $\fg=\fk_{w}\oplus \fm_{w}$ is a reductive decomposition, i.e. $\fm_{w}$ is an ${\rm Ad}(K_{w})$-invariant subspace. 

Let $S_{w}$ be the identity component of the center of $K_{w}$. We denote the Lie algebra of $S_w$ by $\fs_w$. It is easy to see  that $w\in \fs_w$, and hence, we may assume $S_{w}$ is non-trivial.  Then, we obtain an orthogonal decomposition (see \cite[(8.32)]{Besse})
\[
\fm_{w}=\bigoplus_{\alpha\in \Sigma_{+}} \fm_{w,\alpha},\quad {\rm where}\ \fm_{w,\alpha}=\{X\in \fm_{w}\mid {\rm ad}(Z)^{2}X=-\alpha(Z)^{2}X,\ \forall Z\in \fs_{w}\},
\]
where $\alpha$ is a non-zero element in $\fs_{w}^{*}$ satisfying that $\alpha(w)>0$ and $\fm_{w,\alpha}\neq \{0\}$, and we denote the set of such $1$-forms by $\Sigma_{+}$. Using this decomposition, the canonical almost complex structure $J_{w}$ on $\fm_{w}$ is defined by 
\begin{align}\label{eq:Jw}
J_{w}X_{\alpha}=\frac{1}{\alpha(w)}[w,X_{\alpha}]\quad {\rm if}\ X_{\alpha}\in \fm_{w,\alpha}.
\end{align}
Note that each $\fm_{w,\alpha}$ is a $J_{w}$-invariant subspace. 
It then turns out that the canonical almost complex structure $J$ on $M$ is $G$-invariant and integrable (see \cite{Besse} for details).

For each $w\in M$, we extend $J_w$ to $\fg$ by letting $J_wZ=0$ for any $Z\in \fk_w$ so that $J_{w}: \fg\to \fg$ defines an endomorphism on $\fg$.
We use the following fact  which was stated in \cite{BBBR} without proof.
\begin{lemma}\label{lem:key12}
Let $\langle,\rangle_{\fg}$ be an arbitrary ${\rm Ad}(G)$-invariant inner product on $\fg$. Then, for every $w\in M$, $J_{w}$ is skew-symmetric with respect to $\langle, \rangle_{\fg}$.
\end{lemma}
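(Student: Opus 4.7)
The plan is a straightforward case analysis using the decomposition $\fg=\fk_{w}\oplus\fm_{w}=\fk_{w}\oplus\bigoplus_{\alpha\in\Sigma_{+}}\fm_{w,\alpha}$ together with the fact that $\mathrm{ad}(Z)$ is skew-symmetric with respect to $\langle,\rangle_{\fg}$ for every $Z\in\fg$ (this is what $\mathrm{Ad}(G)$-invariance of the inner product means infinitesimally). I will prove that $\langle J_{w}X,Y\rangle_{\fg}=-\langle X,J_{w}Y\rangle_{\fg}$ for all $X,Y\in\fg$ by reducing to the case $X,Y\in\fm_{w,\alpha}$ for a single $\alpha$.

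First I would check orthogonality of the summands. Since $\mathrm{ad}(Z)$ is skew-symmetric for each $Z\in\fs_{w}$, the operator $\mathrm{ad}(Z)^{2}$ is symmetric with respect to $\langle,\rangle_{\fg}$, so its eigenspaces corresponding to distinct eigenvalues are orthogonal. This gives $\fm_{w,\alpha}\perp\fm_{w,\beta}$ for $\alpha\neq\beta$ (pick $Z\in\fs_{w}$ with $\alpha(Z)^{2}\neq\beta(Z)^{2}$, which exists because $\alpha,\beta$ are distinct elements of $\fs_{w}^{*}$), and the decomposition $\fg=\fk_{w}\oplus\fm_{w}$ is orthogonal by hypothesis. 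Using these orthogonalities together with the fact that $J_{w}$ annihilates $\fk_{w}$ and preserves each $\fm_{w,\alpha}$, one checks immediately that the skew-symmetry identity holds (with both sides equal to zero) whenever $X$ or $Y$ lies in $\fk_{w}$, or when $X\in\fm_{w,\alpha}$ and $Y\in\fm_{w,\beta}$ with $\alpha\neq\beta$.

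The only remaining case is $X,Y\in\fm_{w,\alpha}$ for a common $\alpha\in\Sigma_{+}$. Using the definition \eqref{eq:Jw} and the skew-symmetry of $\mathrm{ad}(w)$, I would compute
\begin{equation*}
\langle J_{w}X,Y\rangle_{\fg}=\frac{1}{\alpha(w)}\langle[w,X],Y\rangle_{\fg}=-\frac{1}{\alpha(w)}\langle X,[w,Y]\rangle_{\fg}=-\langle X,J_{w}Y\rangle_{\fg},
\end{equation*}
which is exactly what is needed. Extending bilinearly via the orthogonal decomposition then finishes the proof.

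There is no real obstacle here: the statement is essentially a formal consequence of $\mathrm{Ad}(G)$-invariance of $\langle,\rangle_{\fg}$ and the explicit formula for $J_{w}$. The only thing one needs to be mildly careful about is recording that the weight-space decomposition of $\fm_{w}$ is automatically orthogonal, so that the reduction to a single summand $\fm_{w,\alpha}$ is justified; once this is noted, the calculation is one line.
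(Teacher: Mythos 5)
Your proof is correct and follows essentially the same route as the paper: the key step in both is that $\mathrm{Ad}(G)$-invariance makes $\mathrm{ad}(w)$ skew-symmetric, which combined with \eqref{eq:Jw} gives the claim. The only difference is that you explicitly re-derive the orthogonality of the decomposition $\fg=\fk_{w}\oplus\bigoplus_{\alpha}\fm_{w,\alpha}$ to justify handling cross terms, whereas the paper takes this orthogonality as given (citing Besse); this is a harmless elaboration, not a different argument.
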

\begin{proof}
Since $\langle, \rangle_{\fg}$ is ${\rm Ad}(G)$-invariant, we have $\langle [w,X],Y\rangle_{\fg}+\langle X,[w,Y]\rangle_{\fg}=0$ for any $X,Y\in \fg$. Thus,  \eqref{eq:Jw} implies that $J_{w}$ is  skew-symmetric w.r.t.  $\langle, \rangle_{\fg}$.
\end{proof}

Let $g$ be a $G$-invariant K\"ahler metric on $M$ compatible with the canonical complex structure $J$. Such a metric always exists, in fact, there exists a unique (up to homothety) K\"ahler-Einstein metric on $M$ (see \cite{Besse}). However, $g$ is not necessarily Einstein in the following. Also, any ${\rm Ad}(G)$-invariant inner product $\langle, \rangle_{\fg}$ defines a $G$-invariant metric $g_{0}$ on $M$ since the ${\rm Ad}(G)$-invariant inner product $\langle, \rangle_{\fg}$ defines a ${\rm Ad}_{G}(K_w)$-invariant inner product on $\fm_w$. Nevertheless, we emphasize that the $G$-invariant metric $g$ is not necessarily $g_0$ in our argument.

Now, we give a proof of Theorem  \ref{thm:unst2}.

\begin{proof}[Proof of Theorem \ref{thm:unst2}]
By Proposition  \ref{prop:key}, it is sufficient to show the instability of any non-constant discrete 2-harmonic map. Thus, we suppose  $f: (X,m_E)\to (M,g)$ is a discrete $2$-harmonic map. For simplicity, we abbreviate the integer $p$ in this proof. For example,  ${\rm Hess}_f$ means ${\rm Hess}_f^2$, etc.

 Let $(M,J,g)$ be a simply-connected homogeneous K\"ahler manifold of positive holomorphic sectional curvature. We identify $M$ with an adjoint of orbit of a compact Lie group $G$ as described above. Since $G$ acts  on $M$ isometrically,  any element $V\in \fg$ gives rise to a Killing vector field $V^{*}$ on $M$ which is defined by 
$
V^*_w:=\frac{d}{dt}{\rm Ad}(\exp(tV)) w|_{t=0} 
$
at each $w\in M$. 
We consider a deformation of $f$ generated by $f^{-1}(JV^*)\in \Gamma(f^{-1}TM)$. We denote $f^{-1}(JV^*)$ simply by $JV^*$, and define a quadratic form $q: \fg\to \R$ by 
\[q(V):=\Hess_f(JV^*,JV^*).\]
 We shall compute the trace of $q$ with respect to an ${\rm Ad}(G)$-invariant inner product $\langle, \rangle_{\fg}$.

 Since $V^{*}$ is a Killing vector field,  we have $f^{-1}V^*\in \Gamma_{bal}(f^{-1}TM)$ by Lemma \ref{lem:key1}, and moreover,  the K\"ahler condition $\nabla J=0$ implies that $f^{-1}(JV^*)\in \Gamma_{bal}(f^{-1}TM)$ (see \eqref{def:balv}). Therefore, by Lemma \ref{lem:hess3}, the Hessian becomes the following simple form:
  \begin{align}\label{eq:keyform}
\Hess_f(JV^{*}, W)=\sum_{e\in E} m_E(e)\int_0^1 g(\mathcal{J}_{f,e}(JV_e^{*}), W_e)\,dt.
\end{align}
for any $W\in \Gamma(f^{-1}TM)$.

It is known that any Killing vector field $X$ satisfies that $(\nabla^2X)(Z,W)+R(X,Z)W=0$ for any $Z,W\in \Gamma(TM)$. Since $f$ is a geodesic map (i.e. $\nabla_{T_{e}}T_{e}=0$) and $\nabla J=0$, we see
\begin{align*}%\label{eq:nab}
\nabla_{T_e}\nabla_{T_e}JV^*_e&=J\nabla_{T_e}\nabla_{T_e}V^*_e=J(\nabla^2V^*)(T_e,T_e)=-JR(V^*_e, T_e)T_e.
\end{align*}
Therefore,   we obtain
\begin{align}\label{eq:Jacs}
\mathcal{J}_{f,e}(JV_e^*)=JR(V_e^*, T_e)T_e-R(JV_e^*, T_e)T_e,
\end{align}
and hence, 
\begin{align*}%\label{eq:hess3}
&q(V)=\sum_{e\in E}m_E(e) \int_0^1 q_w(V)\,dt,\\
 {\rm where}&\  q_w(V):=g(R(V_e^*,T_e)T_e, V_e^*)_w-g(R({JV_e^*},T_e)T_e, JV_e^*)_w
%{\rm where}\quad& \mathcal{J}_f(V_e):=-\nabla_{T_e}\nabla_{T_e^p}V_e-(p-2)\nabla_{T_e}(\nabla_{T_e^p}V_e)^{\top_e}-R(V_e, T_e)T_e^p.\nonumber
\end{align*}
for $w\in f(e)$.

Take an arbitrary ${\rm Ad}(G)$-invariant inner product $\langle ,\rangle_\fg$ on $\fg$.  We shall compute the trace of $q_w$ for each $w$. Since $J_w$ is skew-symmetric w.r.t.  $\langle ,\rangle_\fg$ by Lemma \ref{lem:key12}, we can take an orthonormal basis of $\fg$ of the form 
\[\{Z_1,\ldots, Z_k,X_1,\ldots, X_n, J_wX_1,\ldots, J_wX_n\},\]
 where $\{Z_1,\ldots, Z_k\}$ is a basis of $\fk_w$ and $\{X_1,\ldots, X_n, J_wX_1,\ldots, J_wX_n\}$ is a basis of $\fm_w$. Note that this choice of basis depends on $w\in M$. By definition, we have $(Z_i)^*_w=0$ for any $i=1,\ldots,k$, and $(J_wX_i)_w^*=J_{w}(X_{i})_{w}^{*}$ for any $i=1,\ldots, n$ under the natural identification $\fm_{w}\simeq T_{w}M$. Therefore, by using this orthonormal basis of $\fg$, it is easy to see that 
\[
{\rm Tr}\,q_w=\sum_{i=1}^{k}q_{w}(Z_{i})+\sum_{i=1}^{n}\{q_{w}(X_{i})+q_{w}(J_{w}X_{i})\}=0.
\]
Therefore,  the trace of $q$ over $\fg$ is also equal to $0$.

Now, we suppose that $f$ is stable. Since ${\rm Tr}\ q=0$, the stability implies that ${\rm Hess}_f(JV^*,JV^*)=0$ for any $V\in \fg$. Moreover,  $\Hess_f$ defines a positive semi-definite inner product on $\Gamma(f^{-1}TM)$, and therefore, the Cauchy-Schwarz inequality shows that 
\[\Hess_f(JV^*,W)^2\leq \Hess_f(JV^*,JV^*)\Hess_f(W,W)=0,\]
and hence, we obtain 
\[\Hess_f(JV^*, W)=0\]
 for any $V\in \fg$ and  $W\in \Gamma(f^{-1}TM)$.  Since ${\rm Hess}_f$ is given by \eqref{eq:keyform}, by taking $W$ whose support is contained in an edge $e\in E$, we obtain $\mathcal{J}_{f,e}(JV_e^*)=0$. By \eqref{eq:Jacs}, this is equivalent to
 \begin{align}\label{eq:key}
JR(V_e^*, T_e)T_e-R(JV_e^*, T_e)T_e=0
 \end{align}
 for each edge $e\in E$ and for any $V\in \fg$.

Fix an arbitrary point $w\in f(e)$. Then, we can take $V$ so that $V^*(w)=T_e(w)$. Then, \eqref{eq:key} shows that 
$
R(JT_e, T_e)T_e(w)=0.
$
Therefore, if $T_e(w)\neq 0$,  this implies the holomorphic sectional curvature of the complex plane spanned by $\{T_e(w), JT_e(w)\}$ is equal to $0$. This contradicts to the assumption that $M$ has positive holomorphic sectional curvature. Thus, it must hold that $T_e(w)=0$ for any $w\in f(e)$ and any $e\in E$, and hence, $f$ is a constant map.
\end{proof}

\begin{remark}
{\rm 
This method is not valid for $p\geq 3$. Indeed, if we put ${q}^{p}(V):={\rm Hess}_{f}^{p}(JV^{*},JV^{*})$ for $V\in \fg$, then we have ${\rm Tr}q^{p}\geq 0$ if $p\geq 3$, and possibly ${\rm Tr}q^{p}>0$.
}
\end{remark}

\section{Non-existence of stable discrete  maps II}\label{sec:nonex2}

In this section, we shall apply another averaging method used in \cite{HW, LS, Ohnita}, and show the non-existence of stable maps in some compact symmetric spaces.

\subsection{Criteria for the non-existence}\label{subsec:cri}

Let $(M^n, g)$ be an $n$-dimensional Riemannian manifold.
Suppose that $(M^n,g)$ is isometrically immersed into the Euclidean space $\R^{n+k}$ by a map $F: M^n\to \R^{n+k}$. For each $w\in M$ we denote the orthogonal projection onto the tangent (resp. normal) space of $F$ at $w$ by  $(\top_F)_w: T_w\R^{n+k}\to T_wM$ (resp.  $(\perp_F)_w: T_w\R^{n+k}\to T_w^{\perp} M$).  Then, the second fundamental form $B$ and the shape operator $A$ of $F$ is defined by $B(Y,Z):=(\overline{\nabla}_YZ)^{\perp_F}$ and $A_{\xi}(Z):=-(\overline{\nabla}_Z\xi)^{\top_F}$ for $Y,Z\in T_wM$ and $\xi\in T^{\perp}_wM$. Note that $B$ is the symmetric tensor satisfying that $g(B(Y,Z),\xi)=g(A_\xi(Y),Z)$.

For a non-zero vector $v\in \R^{n+k}$, we take the parallel vector field $v$ on $\R^{n+k}$, that is, a vector field on $\R^{n+k}$ satisfying that $\overline{\nabla}v=0$. Then, we obtain smooth vector fields $v^{\top_F}$ and $v^{\perp_F}$ along $F$. The following properties are useful. 

\begin{lemma}\label{lem:key2}
For any $v\in \R^{n+k}\setminus\{0\}$ and $Z\in T_{w}M$, we have
\begin{align}
\label{eq:v1}
\nabla_{Z}v^{\top_{F}}&=A_{v^{\perp_{F}}}Z,\\
\label{eq:v2}
 \nabla^\perp_Z v^{\perp_F}&=-B(Z,v^{\top_F}),
\end{align}
where $\nabla^\perp$ denote the normal connection of $F$.
\end{lemma}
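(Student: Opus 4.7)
The plan is to exploit the fact that the parallel vector field $v$ on $\R^{n+k}$ satisfies $\overline{\nabla}v=0$, where $\overline{\nabla}$ denotes the Euclidean flat connection. Decomposing $v = v^{\top_F} + v^{\perp_F}$ along the immersion $F$, we get the identity
\begin{equation*}
0 = \overline{\nabla}_Z v = \overline{\nabla}_Z v^{\top_F} + \overline{\nabla}_Z v^{\perp_F}
\end{equation*}
for any $Z \in T_w M$, and the two claimed formulas will drop out by comparing the tangential and normal components of this equation.

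First I would expand each term using the standard Gauss and Weingarten formulas of submanifold geometry: the Gauss formula gives $\overline{\nabla}_Z v^{\top_F} = \nabla_Z v^{\top_F} + B(Z, v^{\top_F})$, where $\nabla$ is the induced Levi-Civita connection on $M$, and the Weingarten formula gives $\overline{\nabla}_Z v^{\perp_F} = -A_{v^{\perp_F}} Z + \nabla^\perp_Z v^{\perp_F}$. Substituting these into $\overline{\nabla}_Z v = 0$ yields
\begin{equation*}
\bigl(\nabla_Z v^{\top_F} - A_{v^{\perp_F}} Z\bigr) + \bigl(B(Z, v^{\top_F}) + \nabla^\perp_Z v^{\perp_F}\bigr) = 0.
\end{equation*}
The first grouped term is tangent to $M$ and the second is normal to $M$, so each must vanish separately. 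This immediately gives \eqref{eq:v1} and \eqref{eq:v2}.

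There is no real obstacle here; the statement is essentially a direct consequence of the Gauss--Weingarten decomposition applied to a parallel ambient vector field, and the only thing worth emphasizing is that $v$ being parallel in $\R^{n+k}$ is what makes the left-hand side zero and couples the tangential derivative of $v^{\top_F}$ to the shape operator evaluated at $v^{\perp_F}$. Hence the proof reduces to a one-line verification once the Gauss and Weingarten formulas are written down.
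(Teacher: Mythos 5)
Your proof is correct and follows essentially the same route as the paper: both rest on the parallelism $\overline{\nabla}v=0$ together with the Gauss and Weingarten formulas, with the paper merely deriving the two identities one at a time rather than splitting the single equation $0=\overline{\nabla}_Z v$ into its tangential and normal parts. No gaps.
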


\begin{proof}
Since $\overline{\nabla}v=0$, we see
$
\nabla_Zv^{\top_{F}}=(\overline{\nabla}_Zv^{\top_{F}})^{\top_{F}}=(\overline{\nabla}_Zv-\overline{\nabla}_Zv^{\perp_F})^{\top_{F}}=A_{v^{\perp_F}}Z.
$
This proves \eqref{eq:v1}. Moreover, by definition of normal connection, we see
\begin{align*}
\nabla^{\perp}_{Z}v^{\perp_{F}}&=\overline{\nabla}_{Z}v^{\perp_{F}}+A_{v^{\perp_{F}}}(Z)\\
&=\overline{\nabla}_{Z}(v-v^{\top_{F}})+\nabla_{Z}v^{\top_{F}}\\
&=-\overline{\nabla}_{Z}v^{\top_{F}}+\nabla_{Z}v^{\top_{F}}=-B(Z, v^{\top_{F}}).
\end{align*}
This proves \eqref{eq:v2}.
\end{proof}
\begin{remark}
{\rm 
By \eqref{eq:v1}, if a discrete map $f:(X,m_{E})\to (M,g)$ satisfies the $2$-balanced condition, then we have
$
(F\circ f)^{-1}v^{\top_{F}}\in \Gamma_{bal}(f^{-1}TM).
$
 Indeed, for any vertex $x$, the balanced condition shows that 
\begin{align*}
\sum_{e\in E}m_{E}(e)\nabla_{T_{e}}v^{\top_{F}}_{e}(x)&=\sum_{e\in E}m_{E}(e)A_{v^{\perp_{F}}}(T_{e})_{x}=A_{v^{\perp_{F}}}\Big(\sum_{e\in E}m_{E}(e)T_{e}(x)\Big)=0
\end{align*}
since $A_{v^{\perp_{F}}}$ depends only on $x$. }
\end{remark}
 
Let $f:(X,m_E)\to (M,g)$ be either a minimal  immersion with $p=1$ or a $p$-harmonic map with $p\geq 2$.
We shall consider the vector field $(F\circ f)^{-1}v^{\top_F}\in \Gamma(f^{-1}TM)$, and will be denoted by the same symbol $v^{\top_F}$ if there is no confusion. Then, we define a quadratic form on $\R^{n+k}$ by
\begin{align}\label{def:quad}
q_F^p(v):={\rm Hess}_f^p(v^{\top_{F}}, v^{\top_{F}}),
\end{align}
and we shall consider the trace of $q_F^p$ over $\R^{n+k}$. The following trace formula is an extension of the results due to Howard-Wei \cite{HW} and Ohnita \cite{Ohnita}.

\begin{theorem}\label{thm:trf}
Let $(M,g)$ be an $n$-dimensional smooth Riemannian manifold and $f: (X,m_E)\to (M^n,g)$ be either a discrete minimal  immersion with $p=1$ or a discrete $p$-harmonic map with $p\geq 2$. Suppose that $(M,g)$ is isometrically immersed into $\R^{n+k}$ by a map $F: M\to \R^{n+k}$. 

We  define a tensor field on $M$ by
\[
Q_F(X,Y):=g(H, B(X,Y))-2{\rm Ric}(X,Y),
\]
where $B$ is the second fundamental form of $F$, $H$ is the mean curvature vector of $F$ and ${\rm Ric}$ is the Ricci tensor of $(M,g)$. Then, we have
\begin{align}\label{eq:trf}
{\rm Tr}\,q_F^p=\sum_{e\in E\setminus E_{deg}}m_E(e)\int_0^1 \{Q_F(T_e,T_e)+(p-2)\|B(\widetilde{T}_e, \widetilde{T}_e)\|^2\|T_e\|^2\}\|T_e\|^{p-2}\,dt,
\end{align}
where $\widetilde{T}_e:=T_e/\|T_e\|$ and $E_{deg}$ denotes the set of degenerate edges.
\end{theorem}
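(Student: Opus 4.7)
The plan is to start from the expression in Lemma \ref{lem:hess2} and evaluate it on the vector fields $V_e = W_e = (v^{\top_F})_e$ for each vector $v\in\R^{n+k}$, then sum over an orthonormal basis $\{v_a\}_{a=1}^{n+k}$ of $\R^{n+k}$. Under the $p$-harmonic / minimal immersion hypothesis, the key identity \eqref{eq:svf15} in the proof of Lemma \ref{lem:hess2} gives $\nabla_{T_e}V_e^{\top_e}=(\nabla_{T_e}V_e)^{\top_e}$ and $\nabla_{T_e}V_e^{\perp_e}=(\nabla_{T_e}V_e)^{\perp_e}$, and Lemma \ref{lem:key2} gives $\nabla_{T_e}v^{\top_F}=A_{v^{\perp_F}}T_e$. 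Hence the three integrands in \eqref{eq:hess2} become, at each point of an edge $e$:
\[
\|(A_{v^{\perp_F}}T_e)^{\perp_e}\|^{2}+(p-1)\|(A_{v^{\perp_F}}T_e)^{\top_e}\|^{2}-g\bigl(R\bigl((v^{\top_F})^{\perp_e},T_e\bigr)T_e,(v^{\top_F})^{\perp_e}\bigr),
\]
all multiplied by $\|T_e\|^{p-2}$.

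\noindent Next I would execute the trace over $\R^{n+k}$. Using the completeness relations $\sum_a g(v_a^{\top_F},X)g(v_a^{\top_F},Y)=g(X,Y)$ for $X,Y\in T_wM$ and the analogous relation on $T_w^{\perp}M$, together with $g(A_{\xi}X,Y)=g(B(X,Y),\xi)$, a direct computation in an orthonormal frame $\{E_\alpha\}$ of $T_wM$ gives
\[
\sum_{a=1}^{n+k}\|A_{v_a^{\perp_F}}T_e\|^{2}=\sum_{\alpha=1}^{n}\|B(T_e,E_\alpha)\|^{2},\qquad \sum_{a=1}^{n+k}\|(A_{v_a^{\perp_F}}T_e)^{\top_e}\|^{2}=\|T_e\|^{2}\|B(\widetilde{T}_e,\widetilde{T}_e)\|^{2}.
\]
For the curvature term, I would exploit the fact that $R(\widetilde{T}_e,T_e)T_e=0$ and the skew-symmetry of $R$ to remove the cross-terms produced by the $\perp_e$-projection, leaving $\sum_a g(R(v_a^{\top_F},T_e)T_e,v_a^{\top_F})$, which equals $\mathrm{Ric}(T_e,T_e)$ again by the tangential completeness relation.

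\noindent The final ingredient is the Gauss equation: since $\R^{n+k}$ is flat,
\[
g(R(E_\alpha,T_e)T_e,E_\alpha)=g(B(E_\alpha,E_\alpha),B(T_e,T_e))-\|B(T_e,E_\alpha)\|^{2},
\]
and summation in $\alpha$ yields
\[
\sum_{\alpha=1}^{n}\|B(T_e,E_\alpha)\|^{2}=g(H,B(T_e,T_e))-\mathrm{Ric}(T_e,T_e).
\]
Combining everything, the fiberwise trace of the integrand becomes
\[
\bigl\{g(H,B(T_e,T_e))-2\,\mathrm{Ric}(T_e,T_e)+(p-2)\|B(\widetilde{T}_e,\widetilde{T}_e)\|^{2}\|T_e\|^{2}\bigr\}\|T_e\|^{p-2},
\]
which is exactly $\{Q_F(T_e,T_e)+(p-2)\|B(\widetilde{T}_e,\widetilde{T}_e)\|^{2}\|T_e\|^{2}\}\|T_e\|^{p-2}$.

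\noindent It remains to justify discarding degenerate edges from the sum. For $p=1$ the immersion assumption forbids them entirely; for $p\ge 2$ each edge is geodesic so $\|T_e\|$ is constant along $e$, hence a degenerate edge has $T_e\equiv 0$ and contributes zero to the integrand (the factor $\|T_e\|^{p-2}$ vanishes for $p>2$, while for $p=2$ both $\nabla_{T_e}V_e$ and $R(\cdot,T_e)T_e$ vanish). The main bookkeeping obstacle is making sure the two separate projections $\top_F/\perp_F$ (onto $TM$ in $\R^{n+k}$) and $\top_e/\perp_e$ (onto the line $\R T_e$ inside $TM$) are handled consistently; once those are carefully tracked, as sketched above, the identity follows from Gauss plus the two completeness relations, with no further analytic input.
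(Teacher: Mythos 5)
Your proposal is correct and follows essentially the same route as the paper: average $q_F^p$ over an orthonormal basis of $\R^{n+k}$, use $\nabla_{T_e}v^{\top_F}=A_{v^{\perp_F}}T_e$ from Lemma \ref{lem:key2}, and convert $\sum_\alpha\|B(T_e,E_\alpha)\|^2$ via the Gauss equation. The only (cosmetic) difference is that the paper evaluates the pointwise trace with a basis of $T_w\R^{n+k}$ adapted to the splitting $T_wM\oplus T_w^\perp M$, so the tangential and normal contributions separate term by term, whereas you use a fixed basis together with the two completeness relations; both give the same fiberwise trace.
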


\begin{proof}%[Proof of Theorem \ref{thm:trf}]
  Note that if $p\geq 2$, $f$ could admit a degenerate edge. However, $e\in E_{deg}$ if and only if $f_e$ is a constant map, or equivalently, $T_e\equiv 0$ along $f_e$ since $f$ is a geodesic map if $p\geq 2$. Thus, the degenerate edge does not affect the Hessian, and hence, we assume $e\in E\setminus E_{deg}$ (or equivalently $\|T_e\|\neq 0$) in the following. 

We use the expression \eqref{eq:svf0} of the second variation.
We put
\begin{align*}
&q_{F,w}^p(v):=\|\nabla_{T_e}v^{\top_{F}}_e\|_{w}^2+(p-2)\|(\nabla_{T_e}v_e^{\top_{F}})^{\top_e}\|_{w}^2-g(R(v^{\top_{F}}_e, T_e)T_e, v^{\top_{F}}_e)_{w}.
\end{align*}
for each $w\in f(e)$ so that
\begin{align*}
q_F^p(v)=\sum_{e\in E\setminus E_{deg}}m_E(e)\int_0^1 q_{F,w}^p(v)\|T_e\|_{w}^{p-2}\,dt.
\end{align*}

We fix arbitrary point $w\in f(e)$, and take an orthonormal basis $\{\mu^1,\ldots, \mu^{n},\nu^{n+1},\ldots, \nu^{n+k}\}$ of $T_w\R^{n+k}$  satisfying that $\{\mu^i\}_{i=1}^n$ is a basis of $T_wM$ and  $\{\nu^{\alpha}\}_{\alpha=n+1}^{n+k}$ is a basis of the normal space $T_w^{\perp}M$ in $\R^{n+k}$. We regard $\mu^i$ ($i=1,\ldots, n$) and $\nu^{\alpha}$ ($\alpha=n+1,\ldots, n+k)$ as parallel vector fields on $\R^{n+k}$. Then, by \eqref{eq:v1}, we have $\nabla_{T_e}(\mu^i)_e^{\top_{F}}(w)=A_{(\mu^i)^{\perp_F}}T_e(w)=0$ for each $i=1,\ldots, n$, and hence, we obtain
\[
\sum_{i=1}^{n}q_{F,w}^p(\mu^i)=-\sum_{i=1}^ng(R(\mu^i, T_e)T_e,\mu^i)_w=-{\rm Ric}(T_e,T_e)_w.
\]
On the other hand,  since $\nabla_{T_e}(\nu^\alpha)_e^{\top_{F}}(w)=A_{\nu^{\alpha}}T_e(w)$ by \eqref{eq:v1}, we have
\begin{align*}
\sum_{\alpha=n+1}^{n+k}q_{F,w}^p(\nu^\alpha)&=\sum_{\alpha=n+1}^{n+k}\|A_{\nu^{\alpha}}T_e\|^2_w+(p-2)\|(A_{\nu^{\alpha}}T_e)^{\top_e}\|^2_w.
\end{align*}
Here, the Gauss equation for the isometric immersion $F:M\to \R^{n+k}$ shows that 
\begin{align*}
\sum_{\alpha=n+1}^{n+k}\|A_{\nu^{\alpha}}T_e\|^2_w&=\sum_{\alpha=n+1}^{n+k}\sum_{i=1}^{n}g(A_{\nu^{\alpha}}T_{e}, \mu^{i})_{w}^{2}=\sum_{i=1}^{n}\|B(T_e, \mu^i)\|^2_w\\
&=\sum_{i=1}^n\Big\{g(B(\mu^i, \mu^i), B(T_e,T_e))_w-g(R(T_e,\mu^i)\mu^i, T_e)_w\Big{\}}\\
&=g(H_e, B(T_e,T_e))_w-{\rm Ric}(T_e,T_e)_w.
\end{align*}
Moreover, by letting $\widetilde{T}_e=T_e/\|T_e\|$, we see
\begin{align*}
\sum_{\alpha=n+1}^{n+k}\|(A_{\nu^{\alpha}}T_e)^{\top_e}\|^2_w&=\sum_{\alpha=n+1}^{n+k}g(A_{\nu^{\alpha}}T_e, \widetilde{T}_e)_{w}^{2}=\sum_{\alpha=n+1}^{n+k} g(B(\widetilde{T}_e, \widetilde{T}_e), \nu^{\alpha})_w^{2}\cdot \|T_e\|_w^2=\|B(\widetilde{T}_e, \widetilde{T}_e)\|_w^2\cdot\|T_e\|_w^2.
\end{align*}
Therefore, we obtain
\begin{align*}
{\rm Tr}\, q_{F}^p&=\sum_{i=1}^n q_{F,w}^p(\mu^i)+\sum_{\alpha=n+1}^{n+k} q_{F,w}^p(\nu^\alpha)\\
&=g(H_e, B(T_e,T_e))_w-2{\rm Ric}(T_e,T_e)_w+(p-2)\|B(\widetilde{T}_e, \widetilde{T}_e)\|_w^2\|T_e\|_w^2.
\end{align*}
This proves the formula.
\end{proof}

If the map $f$ satisfies ${\rm Tr}\,q_F^p<0$, then $f$ must be unstable. By the trace formula \eqref{eq:trf}, we obtain the following sufficient condition  to be ${\rm Tr}\,q_F^p<0$ (resp. ${\rm Tr}\,q_F^1<0$) for any discrete $p$-harmonic map (resp. discrete minimal immersion):
 For the isometric immersion $F: M\to \R^{n+k}$, we set
$\mathcal{S}_{F}:=\{\|B(Z,Z)\|_w^2\mid  w\in M,\, Z\in T_wM\, {\rm with}\, \|Z\|=1\}$ and set 
\begin{align*}
\beta_{F}:={\rm inf}\mathcal{S}_{F},\quad \gamma_{F}:={\rm sup}\mathcal{S}_{F}.
\end{align*}
 \begin{corollary}\label{cor:cri1}
Let $F:(M,g)\to \R^{n+k}$ be an isometric immersion. Then,
\begin{enumerate}
\item If $Q_{F}<\beta_{F}g$, then $(M,g)$ has property $({\bf N}_1)$.
%\item If $Q_{F}<0$, then $(M,g)$ has property $({\bf N}_2)$.
\item If $Q_{F}<-(p-2)\gamma_{F}g$, then $(M,g)$ has property $({\bf N}_p)$.
\end{enumerate}

\end{corollary}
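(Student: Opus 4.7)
The plan is to apply the trace formula \eqref{eq:trf} of Theorem \ref{thm:trf} directly and arrange for the integrand to be pointwise strictly negative on every non-degenerate edge; this forces $\mathrm{Tr}\,q_F^p<0$, hence the existence of some $v\in\R^{n+k}$ with $\Hess_f^p(v^{\top_F},v^{\top_F})<0$, contradicting stability. Note that $v^{\top_F}=(F\circ f)^{-1}v^{\top_F}$ is a well-defined element of $\Gamma(f^{-1}TM)$ since it is determined pointwise on $M$.

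Concretely, I would assume for contradiction that a stable discrete minimal immersion (for (1)) or a non-constant stable discrete $p$-harmonic map (for (2)) $f:(X,m_E)\to(M,g)$ exists. On a non-degenerate edge I would factor the integrand of \eqref{eq:trf} as
\[
Q_F(T_e,T_e)+(p-2)\|B(\widetilde{T}_e,\widetilde{T}_e)\|^2\|T_e\|^2=\|T_e\|^2\bigl(Q_F(\widetilde{T}_e,\widetilde{T}_e)+(p-2)\|B(\widetilde{T}_e,\widetilde{T}_e)\|^2\bigr),
\]
where $\widetilde{T}_e=T_e/\|T_e\|$ is a unit tangent vector. In case (1) the parenthesised term is $Q_F(\widetilde{T}_e,\widetilde{T}_e)-\|B(\widetilde{T}_e,\widetilde{T}_e)\|^2$, and the hypothesis $Q_F<\beta_F g$ combined with $\beta_F=\inf\mathcal{S}_F\leq\|B(\widetilde{T}_e,\widetilde{T}_e)\|^2$ makes it strictly negative. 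In case (2), since $p-2\geq 0$ and $\|B(\widetilde{T}_e,\widetilde{T}_e)\|^2\leq\gamma_F$, the hypothesis $Q_F<-(p-2)\gamma_F g$ yields strict negativity by the same factoring.

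It remains to ensure that the edge sum in \eqref{eq:trf} is not vacuous. In case (1) every edge is non-degenerate by the definition of immersion; in case (2) non-constancy of $f$ guarantees at least one edge on which $\|T_e\|$ is a positive constant, as $f$ is a geodesic map on each edge when $p\geq 2$. Combined with the previous paragraph this yields $\mathrm{Tr}\,q_F^p<0$, and evaluating $q_F^p$ on any orthonormal basis of $\R^{n+k}$ then produces a $v$ with $\Hess_f^p(v^{\top_F},v^{\top_F})<0$, contradicting stability.

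I do not expect any genuine technical obstacle: the statement is a pointwise computation sitting directly on top of Theorem \ref{thm:trf}. The one bookkeeping point to watch is that the strict pointwise hypothesis on $Q_F$ must propagate to strict negativity of the integrand, which it does precisely because the auxiliary bounds on $\|B(\widetilde{T}_e,\widetilde{T}_e)\|^2$ supplied by $\beta_F=\inf\mathcal{S}_F$ and $\gamma_F=\sup\mathcal{S}_F$ are non-strict in the direction compatible with the strict hypothesis, and because $\|T_e\|^2>0$ throughout any non-degenerate edge.
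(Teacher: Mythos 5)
Your proposal is correct and is essentially the argument the paper intends: the corollary is stated as an immediate consequence of the trace formula \eqref{eq:trf}, with the paper's surrounding text ("If the map $f$ satisfies ${\rm Tr}\,q_F^p<0$, then $f$ must be unstable") indicating exactly the factoring-and-sign argument you carry out, including the use of $\beta_F$ and $\gamma_F$ to bound $\|B(\widetilde{T}_e,\widetilde{T}_e)\|^2$ in the direction compatible with the strict hypothesis on $Q_F$. Your attention to the non-vacuousness of the edge sum (immersion in case (1), non-constancy plus connectedness in case (2)) is the right bookkeeping and matches the paper's handling of degenerate edges in Theorem \ref{thm:trf}.
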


For example, if $F=F_S: S^n(1)\to \R^{n+1}$ is the natural embedding of the unit sphere $S^n(1)$ equipped with the standard metric $g_S$, then we easily see that
$Q_{F_S}=-(n-2)g$
since $B(\cdot,\cdot)=-g(\cdot, \cdot)\nu$ for the outer unit normal vector $\nu$ of $S^{n}(1)$ and ${\rm Ric}=(n-1)g$.  On the other hand, we have $\beta_{F_{S}}=\gamma_{F_{S}}=1$. Thus, Corollary \ref{cor:cri1} shows that the standard sphere $(S^n, g_S)$ has property $({\bf N}_p)$ if $n\geq p+1$.  In particular, the standard $n$-sphere of $n\geq 2$ has property $({\bf N}_1)$. Note that when $n=2$, the property $({\bf N}_2)$ also holds since $S^2$ is isometric to $\C P^1$ and $\C P^1$ has property  $({\bf N}_2)$ by Theorem \ref{thm:unst2}. We remark that this latter fact gives a negative answer for the question posed by  Colin de Verdi\`ere \cite[p.203]{CdV}.

Next, we shall deal with the case when ${\rm Tr}\,q_{F}^{p}=0$. In this special case, we obtain the following obstructions for stable discrete maps: 
\begin{lemma}\label{lem:cri2}
Suppose $(M,g)$ is isometrically immersed into $\R^{n+k}$ by the immersion $F:M\to \R^{n+k}$.
\begin{enumerate}
\item If $f:(X,m_E)\to (M,g)$ is a stable discrete minimal geodesic immersion with ${\rm Tr}\,q_{F}^{1}=0$, then it holds that
\[
K(N\wedge \widetilde{T}_e)_{w}=\|B(N, \widetilde{T}_e)\|_w^2
\]
for any $w\in f(e)$, $e\in E$ and any unit vector $N\in T_wM$ orthogonal to $T_e$, where $K(X\wedge Y)$ denotes the sectional curvature of the plane spanned by $X, Y$ in  $(M,g)$.
\item If $f:(X,m_E)\to (M,g)$ is a non-constant stable discrete $p$-harmonic map with ${\rm Tr}\,q_{F}^{p}=0$, then $K(N\wedge \widetilde{T}_e)_w=0$ for any $w\in f(e)$, $e\in E\setminus E_{deg}$ and any unit vector $N\in T_wM$ orthogonal to $T_e$. Moreover, $F\circ f:(X,m_E)\to \R^{n+k}$ is also a $p$-harmonic map.
\end{enumerate}
\end{lemma}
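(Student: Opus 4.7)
The plan is to extract pointwise curvature information from the two hypotheses (stability plus vanishing trace) via a two-step localization. Stability gives $q_F^p(v) = \Hess_f^p(v^{\top_F}, v^{\top_F}) \geq 0$ for every $v \in \R^{n+k}$, so the identity ${\rm Tr}\,q_F^p = 0$, combined with non-negativity of each diagonal entry in any orthonormal basis of $\R^{n+k}$, forces $q_F^p \equiv 0$. Since $\Hess_f^p$ is positive semi-definite on $\Gamma(f^{-1}TM)$, Cauchy--Schwarz yields $\Hess_f^p(v^{\top_F}, W) = 0$ for every $v \in \R^{n+k}$ and every $W \in \Gamma(f^{-1}TM)$. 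Substituting into Lemma \ref{lem:hess3} and choosing $W$ supported in the interior of a single edge $e$ isolates the pointwise identity $\mathcal{J}_{f,e}^p(v^{\top_F}_e) = 0$ along every edge in $E$ for case (i), and along every edge in $E \setminus E_{deg}$ for case (ii) (since $\|T_e\|^{p-2}$ is a factor of $\mathcal{J}_{f,e}^p$).

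Next, I would decompose this identity into its $f_e$-tangential and $f_e$-normal parts. Since $(R(\cdot, T_e)T_e)^{\top_e} = 0$, the normal part reads, in both cases,
\[ (\nabla_{T_e}\nabla_{T_e} v^{\top_F}_e + R(v^{\top_F}_e, T_e)T_e)^{\perp_e} = 0. \]
Fix $w \in f_e$ and a unit vector $N \in T_wM$ with $N \perp T_e$, and let $v \in \R^{n+k}$ be the parallel vector field equal to $N$ at $w$, so $v^{\top_F}(w) = N$ and $v^{\perp_F}(w) = 0$. By \eqref{eq:v1}, $\nabla_{T_e} v^{\top_F}_e = A_{v^{\perp_F}} T_e$. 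Differentiating once more via the product rule $\nabla_{T_e}(A_\xi T_e) = (\nabla_{T_e} A)_\xi T_e + A_{\nabla^\perp_{T_e}\xi} T_e$ (the first summand vanishing at $w$ since $v^{\perp_F}(w) = 0$) together with \eqref{eq:v2} in the form $\nabla^\perp_{T_e} v^{\perp_F}|_w = -B(T_e, N)$ yields $\nabla_{T_e}\nabla_{T_e} v^{\top_F}_e(w) = -A_{B(T_e, N)} T_e$. Pairing with $N$ gives $g(\nabla_{T_e}\nabla_{T_e} v^{\top_F}_e, N)_w = -\|B(T_e, N)\|_w^2$, while $g(R(N, T_e)T_e, N)_w = K(N \wedge \widetilde{T}_e)_w \|T_e\|_w^2$; dividing the normal equation by $\|T_e\|_w^2$ gives $K(N \wedge \widetilde{T}_e)_w = \|B(\widetilde{T}_e, N)\|_w^2$, which is case (i).

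For case (ii) the tangential component of $\mathcal{J}_{f,e}^p(v^{\top_F}_e) = 0$ contributes, for $p \geq 2$, the extra equation $(p-1)(\nabla_{T_e}\nabla_{T_e} v^{\top_F}_e)^{\top_e} = 0$. Writing $(v^{\top_F}_e)^{\top_e} = c(t) T_e$ with $c(t) = g(v, T_e)/\|T_e\|^2$ and using $\overline{\nabla}_{T_e} T_e = B(T_e, T_e)$ (since $f_e$ is a geodesic in $M$), one computes $\ddot c(t) = \|T_e\|^{-2}(d/dt)\, g(v^{\perp_F}, B(T_e, T_e))$. Imposing $\ddot c(t_0) = 0$ at $w = f_e(t_0)$ for the parallel $v$ with $v^{\perp_F}(w) = 0$ and $v^{\top_F}(w) = X \in T_wM$ arbitrary forces $g(B(T_e, X), B(T_e, T_e))_w = 0$ for every $X$; taking $X = T_e$ gives $B(T_e, T_e) \equiv 0$ on $E \setminus E_{deg}$. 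Combining this with the normal-part identity $K(N \wedge \widetilde{T}_e) = \|B(\widetilde{T}_e, N)\|^2$ and the Gauss equation $K(N \wedge \widetilde{T}_e) = g(B(N, N), B(\widetilde{T}_e, \widetilde{T}_e)) - \|B(\widetilde{T}_e, N)\|^2$ (now equal to $-\|B(\widetilde{T}_e, N)\|^2$) forces $B(\widetilde{T}_e, N) = 0$ for every $N \perp T_e$, and hence $K(N \wedge \widetilde{T}_e) = 0$. Together with $B(T_e, T_e) = 0$, this shows $B(T_e, \cdot)|_{T_wM} \equiv 0$, so $\overline{\nabla}_{T_e} T_e = 0$ in $\R^{n+k}$: each $F \circ f_e$ is an affine segment, and since the $p$-balanced condition for $F \circ f$ coincides with that for $f$ (as $F$ is an isometric immersion), $F \circ f$ is a discrete $p$-harmonic map into $\R^{n+k}$.

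The main technical hurdle is the bookkeeping in the second paragraph: expressing $\nabla_{T_e}\nabla_{T_e} v^{\top_F}_e$ at $w$ as a clean combination of $A$ and $B$. The judicious choice of the parallel $v$ with $v^{\perp_F}(w) = 0$ is what kills the otherwise awkward $(\nabla A)$- and cross-derivative terms, leaving only the single term $-A_{B(T_e,N)}T_e$.
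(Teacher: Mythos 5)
Your proposal is correct and follows essentially the same route as the paper: stability plus vanishing trace forces $q_F^p\equiv 0$, Cauchy--Schwarz and localization give $\mathcal{J}_{f,e}^p(v^{\top_F}_e)=0$, and the choice of a parallel $v$ with $v^{\perp_F}(w)=0$ reduces $\nabla_{T_e}\nabla_{T_e}v^{\top_F}_e$ to $-A_{B(T_e,\cdot)}T_e$, yielding the curvature identities and, via the Gauss equation and $B(T_e,T_e)\equiv 0$, part (ii). Your treatment of the tangential component through the scalar $c(t)$ is only a cosmetic repackaging of the paper's pairing of the equation with $T_e$.
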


\begin{proof}
Suppose ${\rm Tr}\,q^{p}_{F}=0$ and $f$ is stable. Then we have $q_{F}^{p}(v)={\rm Hess}_{f}^p(v^{\top_{F}}, v^{\top_{F}})= 0$ for any $v\in \R^{n+k}$, otherwise there exists $v\in \R^{n+k}$ so that ${\rm Hess}_{f}^p(v^{\top_{F}}, v^{\top_{F}})<0$. Moreover, since ${\rm Hess}_{f}^p$ is positive semi-definite on $\Gamma(f^{-1}TM)$, the Cauchy-Schwarz inequality implies that 
%shows that 
%$
%{\rm Hess}_{f}^{p}(v^{\top_{F}},W)^{2}\leq {\rm Hess}_{f}^{p}(v^{\top_{F}},v^{\top_{F}}){\rm Hess}_{f}^{p}(W,W)=0
%$
%for any $W\in \Gamma(f^{-1}TM)$.
%In particular, we obtain 
${\rm Hess}_{f}^{p}(v^{\top_{F}},W)=0$ for any $v\in \R^{n+k}$ and $W\in \Gamma(f^{-1}TM)$. 
Thus, the Hessian formula \eqref{eq:hess3} shows that 
\begin{align*}
0={\rm Hess}_{f}^{p}(v^{\top_{F}},W)=\sum_{e\in E} m_E(e)\int_0^1 g(\mathcal{J}_{f,e}^{p}(v^{\top_{F}}_e), W_e)\,dt+2\sum_{x\in V}\sum_{e\in E_x}m_E(e)g(\mathcal{B}_{f,e}^{p}(v^{\top_{F}}_e),W_e)_x.
\end{align*}
Since $W$ is arbitrary, by taking $W$ whose support is contained in $e$, we obtain that $\mathcal{J}_{f,e}^{p}(v^{\top_{F}}_e)=0$ along each $e$. In particular, for any non-degenerate edge $e$, we have
\begin{align}\label{eq:ddv1}
0&=-\nabla_{T_e}\nabla_{T_e}v^{\top_{F}}_e-(p-2)(\nabla_{T_e}\nabla_{T_e}v^{\top_{F}}_e)^{\top_e}-R(v^{\top_{F}}_e, T_e)T_e\\
&=-(\nabla_{T_e}\nabla_{T_e}v^{\top_{F}}_e)^{\perp_{e}}-(p-1)(\nabla_{T_e}\nabla_{T_e}v^{\top_{F}}_e)^{\top_e}-R(v^{\top_{F}}_e, T_e)T_e.\nonumber
\end{align}
Here,  by using \eqref{eq:v1}, \eqref{eq:v2} and the assumption that $f$ is a geodesic map, we see
\begin{align}\label{eq:ddv2}
\nabla_{T_e}\nabla_{T_e}v^{\top_{F}}_e&=\nabla_{T_e}(A_{v^{\perp_{F}}}({T}_{e}))\\
&=(\nabla_{{T}_{e}}A)_{v^{\perp_{F}}}({T}_{e})+A_{v^{\perp_{F}}}(\nabla_{{T}_{e}}{T}_{e})+A_{\nabla^{\perp}_{{T}_{e}}v^{\perp_{F}}}({T}_{e}) \nonumber\\
&=(\nabla_{T_{e}}A)_{v^{\perp_{F}}}(T_{e})-A_{B(T_{e}, v^{\top_{F}})}(T_{e}). \nonumber
\end{align}
Notice that the last equation is tensorial. Thus, we fix an arbitrary point $w\in f(e)$, and for any unit tangent vector $N\in T_{w}M$ orthogonal to $T_{e}$, we take $v\in \R^{n+k}$ so that $v(w)=N$.  Then we have $v^{\top_{F}}(w)=N$ and $v^{\perp_{F}}(w)=0$, and by \eqref{eq:ddv1} and \eqref{eq:ddv2}, we see
%\begin{align}\label{eq:ddv3}
%0&=-(\nabla_{T_{e}}A)_{v^{\perp_{F}}}(T_{e})^{\perp_{e}}+A_{B(T_{e}, v^{\top_{F}})}(T_{e})^{\perp_{e}}-R(v^{\top_{F}}_e, T_e)T_e
%\end{align}
%for any $v\in \R^{n+k}$. %$A_{B(T_{e},N)}(T_{e})^{\perp_{e}}-R(N,T_e)T_e=0$.
\begin{align*}
0&=g\Big(-(\nabla_{T_e}\nabla_{T_e}v^{\top_{F}}_e)^{\perp_{e}}-R(v^{\top_{F}}_e, T_e)T_e, N\Big)_w\\
&=g\Big(A_{B({T}_{e}, N)}({T}_{e})-R(N, {T}_e){T}_e,N\Big)_{w}=\{\|B(\widetilde{T}_{e}, N)\|^{2}_{w}-K(N\wedge \widetilde{T}_{e})_{w}\}\cdot \|T_{e}\|^{2}_{w}
\end{align*}
for any unit vector $N\in T_{w}M$ orthogonal to $T_{e}$ and $w\in f(e)$. Therefore, we obtain
\begin{align}\label{eq:ddv3}
K(N\wedge \widetilde{T}_{e})_{w}=\|B(\widetilde{T}_{e}, N)\|^{2}_{w}
\end{align}
if $e\in E\setminus E_{deg}$. Moreover if $p=1$, this proves (i).

Next, we consider the case when $p\geq 2$. In this case, by taking the inner product of \eqref{eq:ddv1} with $T_{e}$, we have
\begin{align*}
0&=(p-1)g(\nabla_{T_e}\nabla_{T_e}v^{\top_{F}}_e, T_{e})_{w}\\
&=(p-1)\{g((\nabla_{T_{e}}A)_{v^{\perp_{F}}}(T_{e}),T_{e})_{w}-g(B(T_{e}, v^{\top_{F}}), B(T_{e},T_{e}))_{w}\}
\end{align*}
by \eqref{eq:ddv2}. Thus, by taking $v\in \R^{n+k}$ so that $v(w)=T_{e}(w)$ for a fixed $w\in f(e)$,  we obtain 
$
\|B(T_{e},T_{e})\|_{w}^{2}=0.
$
Since $w$ is arbitrary, this shows $B(T_{e},T_{e})\equiv 0$ along each $e$.
Because $f_{e}: [0,1]\to (M,g)$ is a geodesic map, this implies that $F\circ f_{e}:[0,1]\to \R^{n+k}$ is also a geodesic map. Since the $p$-balanced condition still holds,  $F\circ f$ is a $p$-harmonic map into $\R^{n+k}$. Moreover, the Gauss equation for the isometric immersion $F:M\to \R^{n+k}$ shows that 
\begin{align}
0=K(N\wedge \widetilde{T}_e)-g(B(\widetilde{T}_e,\widetilde{T}_e), B(N,N))+\|B(\widetilde{T}_e,N)\|^2=2K(N\wedge \widetilde{T}_e),
\end{align}
where we used \eqref{eq:ddv3} and $B(\widetilde{T}_e,\widetilde{T}_e)=0$. Therefore, $K(N\wedge \widetilde{T}_e)=0$ for any $e\in E\setminus E_{deg}$. This proves (ii).
\end{proof}

\subsection{The case of minimal immersion into the sphere}
If $M$ is minimally immersed into the hypersphere $S^{n+k-1}(r)$ in $\R^{n+k}$, the above results are refined as follows:
Let $F:M^{n}\to \R^{n+k}$ be an isometric immersion and suppose $M$ is minimally immersed into $S^{n+k-1}(r)$. We denote the minimal immersion by $\widetilde{F}: M\to S^{n+k-1}(r)$, and write the second fundamental form of $\widetilde{F}$ by $B_{\widetilde{F}}$. Then, $B=B_{F}$ and $B_{\widetilde{F}}$ are related by
\begin{align}\label{eq:brel}
B_{F}(X,Y)=B_{\widetilde{F}}(X,Y)-\frac{1}{r}g(X,Y)\nu
\end{align}
for $X, Y\in\Gamma(TM)$, where $\nu$ is the outer unit normal vector of $S^{n+k-1}(r)$ in $\R^{n+k}$. 
%Note that, we have
%\[
%\beta^F=\beta^{\widetilde{F}}+\frac{1}{r^{2}}
%\]
%where $\beta^{\widetilde{F}}:={\rm inf}\{\|B^{\widetilde{F}}(Z,Z)\|_w^2\mid  w\in M,\, Z\in T_wM\, {\rm with}\, \|Z\|=1\}$.
Since $\widetilde{F}$ is a minimal immersion, the relation \eqref{eq:brel} shows that the mean curvature $H$ of $F: M\to \R^{n+k}$ is given by $H=-(n/r)\nu$, and hence, we obtain
\begin{align}\label{eq:qric}
Q_{F}=\frac{n}{r^{2}}g-2{\rm Ric}.
\end{align}
In particular, Corollary \ref{cor:cri1} and Lemma \ref{lem:cri2} implies  the following criteria for the property ${({\bf N}_1)}$ or ${({\bf N}_2)}$.

\begin{proposition}\label{prop:keyN}
Suppose $\widetilde{F}: (M,g)\to S^{n+k-1}(r)$ is a minimal immersion. 
\begin{enumerate}
\item If $(M,g)$ satisfies one of the following two conditions,  then $(M,g)$ has property ${({\bf N}_1)}$.
\begin{enumerate}
\item $\displaystyle  {\rm Ric}>\frac{1}{2}\Big(\frac{n-1}{r^{2}}-\beta_{\widetilde{F}}\Big)g$, where $\beta_{\widetilde{F}}:={\rm inf}\{\|B_{\widetilde{F}}(Z,Z)\|_w^2\mid  w\in M,\, Z\in T_wM\, {\rm with}\, \|Z\|=1\}$.
\item ${\rm Ric}=((n-1)/2r^{2})g$ and for any $w\in M$ and any unit tangent vector $Y\in T_{w}M$, there exists a unit tangent vector $Z\in T_{w}M$ orthogonal to $Y$ such that $K(Y\wedge Z)=0$.
\end{enumerate}
\item If $(M,g)$ satisfies ${\rm Ric}\geq (n/2r^2)g$, then $(M,g)$ has property $({\bf N}_2)$.\end{enumerate}
\end{proposition}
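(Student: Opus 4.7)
The plan is to reduce both (i) and (ii) to the trace criterion of Corollary \ref{cor:cri1} together with Lemma \ref{lem:cri2}, applied to $F$ regarded as an isometric immersion into $\R^{n+k}$. The first step is to translate all Euclidean-ambient quantities into sphere-ambient ones. Using $B_F(X,Y) = B_{\widetilde{F}}(X,Y) - (1/r)g(X,Y)\nu$ from \eqref{eq:brel} and the orthogonality of $B_{\widetilde{F}}$ to the radial direction $\nu$, one gets
\begin{align*}
\|B_F(Z,Z)\|^2 = \|B_{\widetilde{F}}(Z,Z)\|^2 + \frac{1}{r^2}
\end{align*}
for every unit $Z$, so that $\beta_F = \beta_{\widetilde{F}} + 1/r^2$. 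Combined with $Q_F = (n/r^2)g - 2{\rm Ric}$ from \eqref{eq:qric}, condition (a) of (i) rearranges exactly to $Q_F < \beta_F g$, and Corollary \ref{cor:cri1}(i) directly yields property $({\bf N}_1)$.

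For (i)(b), substituting ${\rm Ric} = ((n-1)/2r^2)g$ into the trace formula of Theorem \ref{thm:trf} with $p=1$ collapses it to
\begin{align*}
{\rm Tr}\,q_F^1 = -\sum_{e\in E} m_E(e) \int_0^1 \|B_{\widetilde{F}}(\widetilde{T}_e, \widetilde{T}_e)\|^2 \|T_e\|\, dt \leq 0.
\end{align*}
If this is strictly negative, some parallel $v$ gives $q_F^1(v) < 0$ and the map is unstable. The delicate case is equality: then $B_{\widetilde{F}}(\widetilde{T}_e, \widetilde{T}_e) \equiv 0$ along each edge. After reparameterizing $f$ to its geodesic form $f_{geo}$ (which preserves ${\bf L}$ and its stability), I would invoke Lemma \ref{lem:cri2}(i) to obtain $K(N \wedge \widetilde{T}_e) = \|B_F(N, \widetilde{T}_e)\|^2 = \|B_{\widetilde{F}}(N, \widetilde{T}_e)\|^2$ for every unit $N \perp \widetilde{T}_e$, while the Gauss equation of $\widetilde{F}: M \to S^{n+k-1}(r)$, together with the vanishing of $B_{\widetilde{F}}(\widetilde{T}_e, \widetilde{T}_e)$, gives $K(N \wedge \widetilde{T}_e) = 1/r^2 - \|B_{\widetilde{F}}(N, \widetilde{T}_e)\|^2$. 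Combining the two identities yields $K(N \wedge \widetilde{T}_e) = 1/(2r^2) > 0$ for every such $N$, contradicting the zero-sectional-curvature direction guaranteed by (b).

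For (ii), the hypothesis ${\rm Ric} \geq (n/2r^2)g$ gives $Q_F \leq 0$, hence ${\rm Tr}\,q_F^2 = \sum_e m_E(e) \int_0^1 Q_F(T_e, T_e)\, dt \leq 0$; strict negativity again yields instability. In the equality case, $Q_F(T_e, T_e) \equiv 0$ on every non-degenerate edge, i.e., ${\rm Ric}(T_e, T_e) = (n/2r^2)\|T_e\|^2$ there. Lemma \ref{lem:cri2}(ii) applied to a non-constant stable $2$-harmonic map then forces $K(N \wedge \widetilde{T}_e) = 0$ for every $N \perp \widetilde{T}_e$ at any point of a non-degenerate edge, and summing over an orthonormal basis of $(\widetilde{T}_e)^\perp$ yields ${\rm Ric}(\widetilde{T}_e, \widetilde{T}_e) = 0$, contradicting $n/(2r^2) > 0$. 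Since $f$ is non-constant and each edge is a geodesic map, at least one edge has $T_e$ nowhere vanishing, so the argument applies there. The main obstacle in both parts is the equality case, where the crude trace bound is not strict and one must pair the stability-forced vanishing (of $B_{\widetilde{F}}(\widetilde{T}_e, \widetilde{T}_e)$ in (i), or of all sectional curvatures orthogonal to $\widetilde{T}_e$ in (ii)) with the Gauss equation or the Ricci-trace identity to extract a sharp numerical contradiction.
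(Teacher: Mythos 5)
Your proposal is correct and follows essentially the same route as the paper: part (i)(a) is the identical rearrangement of Corollary \ref{cor:cri1} via $\beta_F=\beta_{\widetilde{F}}+1/r^2$, and part (i)(b) is the same collapse of the trace formula to $-\sum_e m_E(e)\int_0^1\|B_{\widetilde{F}}(\widetilde{T}_e,\widetilde{T}_e)\|^2\|T_e\|\,dt$ followed by Lemma \ref{lem:cri2}(i) and the spherical Gauss equation to force $K(N\wedge\widetilde{T}_e)=1/(2r^2)>0$. The only (harmless) divergence is in the equality case of (ii), where you extract the contradiction from the sectional-curvature conclusion of Lemma \ref{lem:cri2}(ii) by tracing to get ${\rm Ric}(\widetilde{T}_e,\widetilde{T}_e)=0$, whereas the paper uses the other conclusion of that lemma, namely that $F\circ f$ would be $2$-harmonic into $\R^{n+k}$ and hence $F(M)\subseteq S^{n+k-1}(r)$ would contain a line segment; both are valid.
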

\begin{proof}
(i)  By Corollary \ref{cor:cri1}, $(M,g)$ has property ${({\bf N}_1)}$ if 
\begin{align}\label{eq:q2}
Q_{F}<\beta_{F}g\quad \Longleftrightarrow\quad {\rm Ric}>\frac{1}{2}\Big(\frac{n}{r^{2}}-\beta_{F}\Big)g=\frac{1}{2}\Big(\frac{n-1}{r^{2}}-\beta_{\widetilde{F}}\Big)g.
 \end{align}
 where  we used the fact $\beta_F=\beta_{\widetilde{F}}+(1/r^2)$.
Thus, the condition (a)  provides a sufficient condition for the property ${({\bf N}_1)}$. 

Suppose the condition (b) holds. In this case, we have $Q_{F}=\frac{1}{r^{2}}g$ by \eqref{eq:qric}, and hence, the trace formula \eqref{eq:trf} for the map $F:M\to \R^{n+k}$ and \eqref{eq:brel} show that
\begin{align*}%\label{eq:key3}
{\rm Tr}\, q_{F}^1%&=\sum_{e\in E}m_{E}(e)\int_{0}^{1}\Big\{Q^{F}(T_{e},T_{e})-\frac{1}{r^{2}}\|T_{e}\|^{2}-\|B^{\widetilde{F}}(\widetilde{T}_{e},\widetilde{T}_{e})\|^{2}\|T_{e}\|^{2}\Big\}\|T_{e}\|^{-1}\,dt,\\
&=\sum_{e\in E}m_{E}(e)\int_{0}^{1}-\|B_{\widetilde{F}}(\widetilde{T}_{e},\widetilde{T}_{e})\|^{2}\cdot \|T_{e}\|\,dt,
\end{align*}
where $B_{\widetilde{F}}$ is the second fundamental form of $\widetilde{F}:M\to S^{n+k-1}(r)$.
Thus, if the discrete minimal immersion $f: X\to M$ were stable,  it  holds that  
\begin{align}\label{eq:key50}
\textup{${\rm Tr}\, q_{F}^1=0$ and
$B_{\widetilde{F}}(\widetilde{T}_{e},\widetilde{T}_{e})=0$ along each $e\in E$.}
\end{align}
We may assume $f$ is a minimal geodesic immersion. Then, Lemma \ref{lem:cri2} (i) implies that
\begin{align}\label{eq:key5}
K(N\wedge \widetilde{T}_{e})=\|B_{F}(N,\widetilde{T}_{e})\|^{2}=\|B_{\widetilde{F}}(N,\widetilde{T}_{e})\|^{2},
\end{align}
for any unit tangent vector $N\in T_wM$ orthogonal to $T_e(w)$.

By \eqref{eq:key50} and \eqref{eq:key5}, the Gauss equation for $\widetilde{F}:M\to S^{n+k-1}(r)$ becomes
\begin{align*}%\label{eq:key50}
K_{S}(N\wedge \widetilde{T}_{e})&=K(N\wedge \widetilde{T}_{e})-g(B_{\widetilde{F}}(\widetilde{T}_{e},\widetilde{T}_{e}), B_{\widetilde{F}}(N,N))+\|B_{\widetilde{F}}(N,\widetilde{T}_{e})\|^{2}\\
%&=K^{S}(X\wedge \widetilde{T}_{e})-\|B^{\widetilde{F}}(X,\widetilde{T}_{e})\|^{2}\\
&=2K(N\wedge \widetilde{T}_{e}).
\end{align*}
for any unit vector $X\in T_{w}M$ orthogonal to $T_{e}(w)$, where $K_{S}$ denotes the sectional curvature of $S^{n+k-1}(r)$. 
Therefore, we conclude
$
K(N\wedge \widetilde{T}_{e})>0
$
for any unit vector $N\in T_{w}M$ orthogonal to $T_{e}(w)$. However, this contradicts to the assumption of (b). Thus, there is no stable discrete minimal immersion into $M$.

(ii) By Corollary \ref{cor:cri1}, $(M,g)$ has property ${({\bf N}_2)}$ if $Q_F<0$, and this is equivalent to ${\rm Ric}>(n/2r^2)g$. Suppose ${\rm Ric}=(n/2r^2)g$, or equivalently, $Q_F=0$. Then, we have ${\rm Tr} q_F^2=0$ by the trace formula. Thus, by Lemma \ref{lem:cri2} (ii), if a non-constant $2$-harmonic map $f:(X,m_E)\to (M,g)$   were stable, then $F\circ f$ is also a $2$-harmonic map into $\R^{n+k}$, and hence, $F(M)$ contains a line segment in $\R^{n+k}$.  However, this yields a contradiction because $F(M)\subseteq S^{n+k-1}(r)$ and $S^{n+k-1}(r)$ does not contain any line segment.  Thus, there is no non-constant stable discrete 2-harmonic map into $M$.
\end{proof}

\subsection{Isotropy irreducible homogeneous spaces}\label{ssec:eg}
As studied in \cite{HW, Ohnita},  we shall apply our non-existence criteria to an isotropy irreducible compact Riemannian homogeneous space. 

Let $(M,g)$ be a compact Riemannian manifold and suppose a closed Lie subgroup $G\subseteq {\rm Isom}(M,g)$ acts  on $M$ transitively. We denote the isotropy subgroup at a point $w\in M$ by $K$ so that $M$ is diffeomorphic to $G/K$, and we always use this identification in the following. Note that $G$ is a compact Lie group since $M$ is compact. For the sake of convenience, we fix an ${\rm Ad}(G)$-invariant inner product $\langle, \rangle$ on the Lie algebra $\fg$ of $G$. We denote the Lie algebra of $K$ by $\fk$ and write the orthogonal complement of $\fk$ in $\fg$ by $\fm$. Then, we obtain a reductive decomposition $\fg=\fk\oplus\fm$, and $\fm$ is identified with the tangent space $T_o(G/K)$ at the origin $o$.  

We say $(M=G/K,g)$ is {\it isotropy irreducible} if the isotropy representation is irreducible, or equivalently, the restriction ${\rm Ad}|_K: K\to GL(\fm)$ is an irreducible representation. If this is the case, every ${\rm Ad}_{G}(K)$-invariant inner products on $\fm$ is proportional by Schur's lemma, and hence, a $G$-invariant Riemannian metric $g$ is uniquely determined  (up to homothety).  In particular, we may assume $g$ is obtained by the extension of the restriction $\langle, \rangle|_\fm$ of the ${\rm Ad}(G)$-invariant inner product, namely, $g$ is a normal homogeneous metric. Moreover, it turns out that $g$ is actually an Einstein metric (see \cite{Wolf}) and the sectional curvature of $(M,g)$ is non-negative (see \eqref{eq:nsc} below for the precise expression of the curvature).  If $M$ is flat, every discrete harmonic map is stable by Theorem \ref{thm:sta}. Thus we assume $M$ is a non-flat space for our purpose.  See \cite{WZ,Wolf} for the classification of simply-connected isotropy irreducible Riemannian homogeneous spaces.

  Let us recall the $i$-th standard immersion  $\Phi_i: M^n\to \R^{n+k}$ of $M=G/K$. For a positive integer $i\geq 1$, we denote the $i$-th (positive) eigenvalue of the Laplace operator $\Delta=d\delta+\delta d$ acting on $C^{\infty}(M)$ by $\lambda_i>0$. We denote the $i$-th eigenspace in $L^2(M)$ by $V_i$ and set $m_i:={\rm dim}V_i$. Take an $L^2$-orthonormal basis $\{f_1,\ldots, f_{m_i}\}$ of $V_i$ and define an immersion $\Phi_i: M\to V_i\simeq \R^{m_i}$ by
  \[
  \Phi_i:=\alpha\cdot (f_1,\ldots, f_{m_i}),
  \]
  where $\alpha$ is some positive constant.   
It turns out that $\Phi_i$ is a $G$-equivariant map with respect to some orthogonal representation $\rho: G\to O(V_i)$, i.e. it holds that $\Phi_i(gw)=\rho(g)\Phi_i(w)$ for any $g\in G$ and $w\in M$ (see \cite[Subsection 2.4.2]{BCO} for details). Thus, the induced metric is also $G$-invariant, and  proportional to $g$ since $M=G/K$ is isotropy irreducible. Therefore, we can choose $\alpha$ so that $\Phi_i:(M,g)\to \R^{m_i}$ is an isometric immersion. Then, the map $\Phi_i$ is called  the {\it $i$-th standard isometric immersion of $(M,g)$.} In the following, we suppose $\Phi_{i}$ is an isometric immersion.
  
    Because $\Phi_i$ is equivariant under the representation $\rho:G\to O(V_i)$, $\Phi_i$ is an immersion into the hypersphere $S^{m_i-1}(r)$ of some radius $r$. We denote the immersion by $\widetilde{\Phi}_i: M\to S^{m_i-1}(r)$. Then, by the Theorem of Takahashi, $\widetilde{\Phi}_{i}$ becomes a minimal immersion. Indeed, a well-known formula for an isometric immersion into the Euclidean space shows that $\Delta\Phi_i=-H$, where $H$ is the mean curvature vector of $\Phi_i$. Since  the mean curvature vector $\widetilde{H}$ of $\widetilde{\Phi}_i$ and  $H$ is related by $H=\widetilde{H}-\frac{n}{r}\nu$,  we actually have $\Delta\Phi_i=-\widetilde{H}+\frac{n}{r}\nu$. On the other hand,  we have $\Delta\Phi_i=\lambda_i\Phi_i=\lambda_i\cdot r\nu$ by definition of $\Phi_i$. Combining these two equations,  we see that 
    $\widetilde{H}=0$ and $r^2={n/\lambda_i}$,
    namely, $\widetilde{\Phi}_i: M\to S^{m_i-1}(r)$ is a minimal immersion into the hypersphere of radius $r=\sqrt{n/\lambda_i}$. 
    
 Since $g$ is an Einstein metric, we have ${\rm Ric}=c\cdot  g$ for some constant $c$. Note that $c$ is written by $c={\rm scl}(M)/n$, where ${\rm scl}(M)$ is the scalar curvature of $(M,g)$. Since $\widetilde{\Phi}_i$ is a minimal immersion,   \eqref{eq:qric} shows that
 \begin{align*}%\label{eq:qphi}
 Q_{\Phi_i}=(\lambda_i-2c)g.
 \end{align*}
Moreover, by applying Proposition \ref{prop:keyN} to the minimal immersion $\widetilde{\Phi}_i$, we obtain the following:

\begin{proposition}\label{prop:lb}
Let $M^{n}=G/K$ be a non-flat, isotropy irreducible compact homogeneous space with ${\rm Ric}=c\cdot g$. 
\begin{enumerate}
\item If $(M,g)$ satisfies one of the following two conditions, then $(M,g)$ has property $({\bf N}_1)$.
\begin{enumerate}
\item[${\rm (a)}'$] $\lambda_{i}<\frac{n}{n-1}(2c+\beta_{\widetilde{\Phi}_{i}})$ for some $i$.
\item[${\rm (b)}'$] $\lambda_{1}=2cn/(n-1)$ and ${\rm dim}\fa_{Y}\geq 2$ for any $Y\in \fm$, where $\fa_Y$ is a maximal abelian subspace in $\fm$ containing $Y$.
\end{enumerate}
\item  If $(M,g)$ satisfies $\lambda_1\leq 2c$, then $(M,g)$ has property $({\bf N}_2)$.
\end{enumerate}
\end{proposition}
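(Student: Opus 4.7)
The plan is to apply Proposition \ref{prop:keyN} directly to the $i$-th standard minimal immersion $\widetilde{\Phi}_{i}\colon (M,g)\to S^{m_{i}-1}(r)$ with $r=\sqrt{n/\lambda_{i}}$, using the identity $Q_{\Phi_{i}}=(\lambda_{i}-2c)g$ already derived in the text and the fact that ${\rm Ric}=c\cdot g$ since $g$ is Einstein. Everything reduces to substitutions plus, for the equality case (b)$'$, one small observation about the curvature of a normal homogeneous space.

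First I would handle (ii). Taking $i=1$, we have $n/(2r^{2})=\lambda_{1}/2$, so the hypothesis ${\rm Ric}\geq (n/2r^{2})g$ of Proposition \ref{prop:keyN}(ii) is exactly $c\geq \lambda_{1}/2$, i.e.\ $\lambda_{1}\leq 2c$. This gives property $({\bf N}_{2})$ and finishes (ii). Next, for (i)(a)$'$, I plug $r^{2}=n/\lambda_{i}$ into the hypothesis ${\rm Ric}>\tfrac{1}{2}\bigl(\tfrac{n-1}{r^{2}}-\beta_{\widetilde{F}}\bigr)g$ of Proposition \ref{prop:keyN}(i)(a) with $\widetilde{F}=\widetilde{\Phi}_{i}$; this becomes $2c>\tfrac{(n-1)\lambda_{i}}{n}-\beta_{\widetilde{\Phi}_{i}}$, equivalent to $\lambda_{i}<\tfrac{n}{n-1}(2c+\beta_{\widetilde{\Phi}_{i}})$, which is precisely (a)$'$.

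The only non-trivial translation is (i)(b)$'$. Applying Proposition \ref{prop:keyN}(i)(b) to $\widetilde{\Phi}_{1}$, the numerical hypothesis ${\rm Ric}=\tfrac{n-1}{2r^{2}}g$ becomes $c=\tfrac{(n-1)\lambda_{1}}{2n}$, i.e.\ $\lambda_{1}=\tfrac{2cn}{n-1}$, matching the first half of (b)$'$. For the geometric half, I need: at every $w\in M$ and every unit $Y\in T_{w}M$ there is a unit $Z\perp Y$ with $K(Y\wedge Z)=0$. By $G$-homogeneity it suffices to check this at the origin $o$, where $T_{o}M\cong \mathfrak{m}$. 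Since $g$ is a normal homogeneous metric (as $G/K$ is isotropy irreducible, $g$ comes from $\langle,\rangle|_{\mathfrak{m}}$), the sectional curvature satisfies the standard formula
\begin{equation*}
K(Y\wedge Z)=\tfrac{1}{4}\|[Y,Z]_{\mathfrak{m}}\|^{2}+\|[Y,Z]_{\mathfrak{k}}\|^{2}
\end{equation*}
for orthonormal $Y,Z\in\mathfrak{m}$, so $[Y,Z]=0$ in $\mathfrak{g}$ forces $K(Y\wedge Z)=0$. Given any unit $Y\in\mathfrak{m}$, the hypothesis $\dim\mathfrak{a}_{Y}\geq 2$ guarantees a unit vector $Z\in\mathfrak{a}_{Y}$ with $Z\perp Y$, and since $\mathfrak{a}_{Y}$ is abelian we have $[Y,Z]=0$, hence $K(Y\wedge Z)=0$. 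Thus the hypothesis of Proposition \ref{prop:keyN}(i)(b) holds and property $({\bf N}_{1})$ follows.

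The main (minor) obstacle is simply invoking the normal-homogeneous curvature formula correctly and checking that "maximal abelian in $\mathfrak{m}$" in the statement, combined with abelianness in $\mathfrak{g}$, yields the vanishing bracket needed; once that is in place, the rest of the argument is purely a matter of substituting $r^{2}=n/\lambda_{i}$ and ${\rm Ric}=cg$ into the three criteria of Proposition \ref{prop:keyN}. No new analysis of discrete maps is required beyond what is already contained in Section \ref{subsec:cri}.
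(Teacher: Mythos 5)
Your proof is correct and follows essentially the same route as the paper: both parts reduce to substituting $r^{2}=n/\lambda_{i}$ and ${\rm Ric}=c\,g$ into the three criteria of Proposition \ref{prop:keyN} applied to $\widetilde{\Phi}_{i}$, and the geometric half of (b)$'$ is handled exactly as in the paper via the normal-homogeneous curvature formula $K(Y\wedge Z)=|[Y,Z]_{\fk}|^{2}+\tfrac{1}{4}|[Y,Z]_{\fm}|^{2}$ together with the existence of a commuting unit vector in $\fa_{Y}$.
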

\begin{proof}
By Proposition \ref{prop:keyN} (i)-(a), the property (${\bf N}_1$) is provided if $\widetilde{\Phi}_i$ satisfies ${\rm Ric}>\frac{1}{2}(\frac{n-1}{r^{2}}-\beta_{\widetilde{\Phi}_{i}})g$ for some $i$.  Since ${\rm Ric}=cg$ and $r=\sqrt{n/\lambda_{i}}$,  this condition is equivalent to ${\rm (a)}'$.

Next, we shall apply Proposition \ref{prop:keyN} (i)-(b) to the isometric immersion $\widetilde{\Phi}_i$.
It is easy to see that ${\rm Ric}=((n-1)/2r^{2})g$ if and only if $\lambda_{i}=2cn/(n-1)$ since $r=\sqrt{n/\lambda_{i}}$ and $g$ is an Einstein metric. If $i\geq 2$, then $\lambda_1<\lambda_i=2cn/(n-1)$ and this is reduced to the case ${\rm (a)}'$. Thus, we suppose $\lambda_{1}=2cn/(n-1)$.

We consider the obstruction for the sectional curvature given in  Proposition \ref{prop:keyN} (i)-(b). As mentioned above, we may assume $g$ is a normal homogeneous metric. 
Thus, by using the general formula of the curvature tensor of Riemannian homogeneous space (see \cite[Theorem 7.30]{Besse}), we obtain that
\begin{align}\label{eq:nsc}
K(Y\wedge Z)=|[Y,Z]_{\fk}|^{2}+\frac{1}{4}|[Y,Z]_{\fm}|^{2}
\end{align}
for any orthonormal pair $Y, Z\in \fm$, where $Y_{\fk}$ and $Z_{\fm}$ denote the orthogonal projection onto $\fk$ and $\fm$, respectively. In particular, we see that $K(Y\wedge Z)=0$ if and only if $[Y,Z]=0$. Therefore, for a given $Y\in \fm$, there exists a unit vector $Z\in \fm$ orthogonal to $X$ such that $K(Y\wedge Z)=0$ if and only if ${\rm dim}\fa_{Y}\geq 2$. Since $M$ is homogeneous, this shows that the condition ${\rm (b)}'$ implies the property $({\bf N}_1)$. This proves (i).

Similarly, by applying Proposition \ref{prop:keyN}-(ii) to $\widetilde{\Phi}_{i}$, we obtain (ii).
 \end{proof}
\begin{remark}\label{rem:lb}
{\rm
It is not necessary to assume  $M$ is simply-connected in this proposition. In particular, if there exists a stable discrete minimal immersion into an isotropy irreducible space $M=G/K$ (e.g. $\pi_{1}(M)\neq \{0\}$), then we have 
\begin{align*}
\lambda_{1}\geq \frac{2cn}{n-1}=\frac{2{\rm scl}(M)}{n-1}.
\end{align*} 
We remark that, it is known $\lambda_{1}\geq {\rm scl}(M)/(n-1)$ for any isotropy irreducible compact homogeneous space (see \cite[Theorem 2]{Nagano}). Thus,  an improved estimate  of $\lambda_{1}$ is provided if $M$ admits a stable discrete minimal immersion.
}
\end{remark}

Notice that if 
\begin{align}\label{eq:cond1}
\lambda_1<2c
 \end{align}
(or equivalently $Q_{\Phi_i}<0$ for some $i$), then $(M,g)$ has properties $({\bf N}_1)$ and $({\bf N}_2)$. 
It is known that this condition \eqref{eq:cond1} is actually equivalent to the {\it strongly instability} of $M=G/K$ in the sense of \cite{HW} (see \cite[Theorem 5.3]{HW}. See also \cite[Theorem 4]{Ohnita}). If $M$ is strongly unstable, there is no non-constant stable {\it smooth} harmonic map from any compact Riemannian manifold $N$ into $M$. Our result shows that a similar non-existence result still holds for {\it discrete} harmonic maps.
 \begin{corollary}\label{cor:su}
If $(M,g)$ is a strongly unstable, isotropy irreducible compact homogeneous space, then $(M,g)$ has properties $({\bf N}_1)$ and $({\bf N}_2)$. \end{corollary}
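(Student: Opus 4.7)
The plan is to deduce the corollary directly from Proposition~\ref{prop:lb} by unpacking the characterization of strong instability cited in the paragraph just before the corollary. Recall from \cite[Theorem 5.3]{HW} (see also \cite[Theorem 4]{Ohnita}) that an isotropy irreducible compact homogeneous space $M=G/K$ with ${\rm Ric}=c\cdot g$ is strongly unstable if and only if $\lambda_{1}<2c$. So I would begin the proof by invoking this equivalence to reduce the hypothesis to the single numerical inequality $\lambda_{1}<2c$.

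Next, I would verify the two properties separately. For property $({\bf N}_{2})$, the inequality $\lambda_{1}<2c$ immediately implies $\lambda_{1}\leq 2c$, so Proposition~\ref{prop:lb}~(ii) applies. For property $({\bf N}_{1})$, the point is that $\beta_{\widetilde{\Phi}_{1}}\geq 0$ by definition (it is an infimum of squared norms), and $n/(n-1)>1$ for $n\geq 2$, so
\[
\lambda_{1}<2c<\frac{2cn}{n-1}\leq \frac{n}{n-1}\bigl(2c+\beta_{\widetilde{\Phi}_{1}}\bigr),
\]
which is exactly condition $({\rm a})'$ of Proposition~\ref{prop:lb}~(i) with $i=1$. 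Hence $(M,g)$ has property $({\bf N}_{1})$ as well.

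There is essentially no obstacle: the corollary is a purely algebraic consequence of the two sufficient conditions already established in Proposition~\ref{prop:lb}, together with the known spectral characterization of strong instability. The only thing to be mindful of is that $M$ is assumed non-flat (so that $c>0$ and the standard immersion $\Phi_{1}$ is available), which is automatic under the strong instability hypothesis since a flat isotropy irreducible space would have $c=0$ and hence cannot satisfy $\lambda_{1}<2c=0$.
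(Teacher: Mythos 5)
Your proof is correct and follows essentially the same route as the paper: the paper deduces the corollary from the observation that strong instability is equivalent to $\lambda_{1}<2c$, i.e.\ $Q_{\Phi_{1}}<0$, and then applies the criteria already established (Corollary \ref{cor:cri1}, repackaged as Proposition \ref{prop:lb}). Your passage through conditions $({\rm a})'$ and (ii) of Proposition \ref{prop:lb}, including the observations that $\beta_{\widetilde{\Phi}_{1}}\geq 0$ and that $c>0$ is automatic, is exactly the intended argument.
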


\subsection{Compact symmetric spaces}\label{subsec:RSS}

A typical example of isotropy irreducible homogeneous space is given by an irreducible Riemannian symmetric space of compact type (RSSCT for short). We consider the canonical Riemannian metric $g_K$ on RSSCT $M=G/K$ which is defined by the Killing form ${\bf B}$ on the semi-simple Lie group $G$. Then, it is known that the Ricci tensor of $(M,g_{K})$ is given by ${\rm Ric}=(1/2)g_{K}$.

The first eigenvalue $\lambda_{1}$ of simply-connected RSSCT with respect to $g_{K}$ is given in \cite[p.187]{KOT}. Using this data, we can determine strongly unstable simply-connected  RSSCTs, which was first done by Howard-Wei and Ohnita \cite{HW, Ohnita} (See Table \ref{list1}).   Notice that there are three classes of simply-connected RSSCT:
\begin{enumerate}
\item $M$ is strongly unstable. This is equivalent to $\lambda_1<1$.
\item $M$ is either a Hermitian symmetric space or $G_{2}$. This is equivalent to $\lambda_1=1$.
\item Otherwise, i.e. $\lambda_1>1$.
\end{enumerate}
%We remark that these classes are also divided by $\pi_2(M)$.

 \begin{table}[htb]
  \begin{tabular}{|c|c|c|c|c|}\hline
 $M$  & Rank & SU or HSS  \\ \hline\hline
 $SU(p)/SO(p)$ $(p\geq 3)$  & $p-1$ &  No   \\ \hline
  $SU(2p)/Sp(p)$ $(p\geq 2)$ & $p-1$ & SU  \\ \hline
$SU(q+p)/S(U(q)\times U(p))$ ($q\geq p\geq 1$)& $p$  & HSS\\ \hline
\begin{tabular}{c}
$SO(q+1)/SO(q)$ ($q\geq 3$) \\
$SO(q+2)/SO(q)\times SO(2)$ ($q\geq 3$) \\
$SO(q+p)/SO(q)\times SO(p)$ ($q\geq p\geq 3$) 
\end{tabular}
& 
\begin{tabular}{c}
1\\
2\\
$p$
\end{tabular}
& 
\begin{tabular}{c}
SU\\
HSS\\
No 
\end{tabular}
\\ \hline
 $Sp(p)/U(p)$ $(p\geq 1)$ & $p$ & HSS \\ \hline
$Sp(q+p)/Sp(q)\times Sp(p)$ ($q\geq p\geq 1$)& $p$ & SU  \\ \hline
$SO(2p)/U(p)$ $(p\geq 2)$ & $[p/2]$ & HSS\\ \hline 
$E_{6}/Sp(4)$ & $6$ & No \\\hline
 $E_{6}/SU(6)Sp(1)$ & $4$ & No \\ \hline 
 $E_{6}/Spin(10)U(1)$  & $2$ & HSS\\ \hline 
 $E_{6}/F_{4}$ & $2$ & SU\\ \hline
 $E_{7}/SU(8)$  & $7$ & No\\ \hline
 $E_{7}/SO(12)Sp(1)$  & $4$ & No\\ \hline
 $E_{7}/E_{6}U(1)$  & $3$ & HSS\\ \hline
 $E_{8}/SO(16)$  & $8$ & No\\ \hline
 $E_{8}/E_{7}Sp(1)$  & $4$ & No\\ \hline
 $F_{4}/Sp(3)Sp(1)$  & $4$ & No\\ \hline
 $F_{4}/Spin(9)$  & $1$ & SU\\ \hline
 $G_{2}/SO(4)$ & $2$ & No\\ \hline
  $SU(p+1)$ $(p\geq 1)$ & $p$ &SU \\ \hline 
\begin{tabular}{c}
$Spin(5)$\\
$Spin(2p+1)$ $(p\geq 3)$
\end{tabular}
 &
\begin{tabular}{c}
 $2$\\
  $p$ 
  \end{tabular}
  & 
  \begin{tabular}{c}
  SU\\
  No
  \end{tabular}
   \\ \hline
 $Sp(p)$ $(p\geq3)$ & $p$ & SU\\ \hline 
 $Spin(2p)$ $ (p\geq4)$ & $p$ & No\\ \hline
 $E_{6}$  & $6$ & No\\ \hline
 $E_{7}$  & $7$ & No\\ \hline
  $E_{8}$  & $8$ & No\\ \hline
  $F_{4}$  & $4$ & No\\ \hline
  $G_{2}$  & $2$ & No\\ \hline
  \end{tabular}
  \caption{List of irreducible simply-connected Riemannian symmetric space of compact type. The second column shows the rank of $M$, that is, the dimension of the maximal abelian subspace in $\fm$. The third column indicates whether $M$ is strongly unstable (SU) or Hermitian symmetric (HSS), where ``No'' indicates $M$ is neither of them.}
      \label{list1}
  \end{table}
  
 By Proposition \ref{prop:lb} (ii), we obtain the following result.
  \begin{corollary}
  Let $(M,g_K)$ be a simply-connected compact irreducible Riemannian symmetric space. If $M$ is either a strongly unstable RSS,  a Hermitian symmetric space or $G_2$, then $M$ has property $({\bf N}_2)$ (and also $({\bf N}_1)$).
  \end{corollary}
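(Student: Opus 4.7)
The plan is to invoke Proposition \ref{prop:lb}(ii) in each of the three cases, observing that for the canonical metric $g_K$ the Einstein constant is $c=1/2$, so the sufficient condition $\lambda_1 \leq 2c$ becomes simply $\lambda_1\leq 1$. Once property $({\bf N}_2)$ is established, the implication $({\bf N}_2)\Rightarrow ({\bf N}_1)$ recorded at the start of Section \ref{sec:nonex1} (which follows from Proposition \ref{prop:key}) immediately yields $({\bf N}_1)$, so no separate argument for minimal immersions is needed.

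First I would recall that for an irreducible RSSCT equipped with the canonical metric $g_K$ one has $\mathrm{Ric}=\tfrac12 g_K$, so the hypothesis of Proposition \ref{prop:lb}(ii) reads $\lambda_1\leq 1$. Next, I would classify the three cases according to the value of $\lambda_1$ by appealing to the tabulated first eigenvalues from \cite{KOT} (already summarized by the author in the discussion preceding Table \ref{list1}):
\begin{itemize}
\item If $M$ is strongly unstable, then by the equivalence $\lambda_1<2c\Longleftrightarrow$ strongly unstable (see \cite[Theorem 5.3]{HW} and \cite[Theorem 4]{Ohnita}, already cited in \eqref{eq:cond1}), we have $\lambda_1<1$.
\item If $M$ is a Hermitian symmetric space, the standard computation of $\lambda_1$ for these spaces with $g_K$ gives $\lambda_1=1$.
\item If $M=G_2$, the eigenvalue data in \cite{KOT} yields $\lambda_1=1$.
\end{itemize}
In all three cases $\lambda_1\leq 1 = 2c$, so Proposition \ref{prop:lb}(ii) applies and $M$ has property $({\bf N}_2)$.

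Finally, to conclude property $({\bf N}_1)$, I would quote the general implication $({\bf N}_p)\Rightarrow ({\bf N}_{p'})$ for $1\leq p'\leq p$ that is recorded at the start of Section \ref{sec:nonex1} and proved via Proposition \ref{prop:key}: given any discrete minimal immersion $f$ into $M$, we may reparametrise it as a minimal geodesic immersion $f_{\mathrm{geo}}$ without changing ${\bf L}$, and then the associated $2$-harmonic immersion $f_{\mathrm{geo}}^{2}:(X,m_E^2)\to (M,g)$ would be stable by Proposition \ref{prop:key}; but this contradicts property $({\bf N}_2)$ unless $f_{\mathrm{geo}}^{2}$ is constant, which is impossible for an immersion.

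There is no real obstacle here: the corollary is essentially a bookkeeping consequence of Proposition \ref{prop:lb}(ii), the known classification of first eigenvalues of RSSCTs, and the transfer from $({\bf N}_2)$ to $({\bf N}_1)$ via Proposition \ref{prop:key}. The only subtlety worth flagging is that the case $\lambda_1=1$ (Hermitian symmetric spaces and $G_2$) is the borderline equality case of Proposition \ref{prop:lb}(ii); but that case is covered precisely because the proof of Proposition \ref{prop:lb}(ii) itself already handles $\lambda_1 = 2c$ via the vanishing-trace refinement coming from Lemma \ref{lem:cri2}(ii) (no line segments in a sphere), so no extra work is required here.
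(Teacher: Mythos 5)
Your proposal is correct and follows exactly the paper's route: the corollary is deduced from Proposition \ref{prop:lb} (ii) together with the fact that ${\rm Ric}=\tfrac12 g_K$ and the eigenvalue classification (strongly unstable $\Leftrightarrow\lambda_1<1$; Hermitian symmetric or $G_2$ $\Leftrightarrow\lambda_1=1$) stated just before Table \ref{list1}, with $({\bf N}_2)\Rightarrow({\bf N}_1)$ supplied by Proposition \ref{prop:key}. Your remark that the borderline case $\lambda_1=2c$ is already absorbed into the proof of Proposition \ref{prop:lb} (ii) via Lemma \ref{lem:cri2} (ii) is also accurate.
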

For example, any simply-connected compact rank one Riemannian symmetric space (CROSS) has properties  $({\bf N}_1)$ and $({\bf N}_2)$. If $M$ is a Hermitian symmetric space, this corollary also follows from Theorem \ref{thm:unst2}. 

For the property $({\bf N}_1)$, we can slightly improve the result.  Since any simply-connected CROSS has property $({\bf N}_1)$, we suppose ${\rm rank}M\geq 2$ in the following. In this case, Proposition \ref{prop:lb} (i) implies the following:
\begin{corollary}\label{cor:lb}
Let $(M^{n},g_{K})$ be an irreducible compact Riemannian symmetric space of rank$M\geq 2$. If the inequality 
\begin{align}\label{eq:cond4}
\lambda_{1}\leq \frac{n}{n-1}
\end{align}
holds, then $(M,g_{K})$ has property $({\bf N}_1)$.
 \end{corollary}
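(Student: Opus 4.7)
The plan is to apply Proposition \ref{prop:lb} (i) directly, using its two sufficient conditions ${\rm (a)}'$ and ${\rm (b)}'$. First I would record that for the canonical metric $g_K$ on an irreducible RSSCT the Ricci tensor is ${\rm Ric}=(1/2)g_K$, so the Einstein constant is $c=1/2$. Substituting $2c=1$ into Proposition \ref{prop:lb}, condition ${\rm (a)}'$ becomes $\lambda_i < \frac{n}{n-1}(1+\beta_{\widetilde{\Phi}_i})$ for some $i$, and the equality in ${\rm (b)}'$ becomes $\lambda_1 = n/(n-1)$.

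Next I would split the hypothesis $\lambda_1 \leq n/(n-1)$ into two cases. In the strict case $\lambda_1 < n/(n-1)$, since $\beta_{\widetilde{\Phi}_1}\geq 0$ holds trivially, one has $\lambda_1 < \frac{n}{n-1} \leq \frac{n}{n-1}(1+\beta_{\widetilde{\Phi}_1})$, so ${\rm (a)}'$ is satisfied with $i=1$ and property $({\bf N}_1)$ follows immediately. In the boundary case $\lambda_1 = n/(n-1)$, the equation in ${\rm (b)}'$ holds automatically, so it remains only to verify $\dim \fa_Y \geq 2$ for every $Y \in \fm$. This is where the rank hypothesis enters: by the general structure theory of symmetric pairs, every element of $\fm$ lies in some maximal abelian subspace of $\fm$, and all such subspaces have the same dimension ${\rm rank}M \geq 2$; so $\dim\fa_Y={\rm rank}M\geq 2$ uniformly, and ${\rm (b)}'$ applies.

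The main, and really only, subtle point is that the boundary case $\lambda_1 = n/(n-1)$ cannot be handled by the pure spectral estimate in ${\rm (a)}'$ and forces us to invoke ${\rm (b)}'$ together with the rank $\geq 2$ hypothesis; otherwise the entire argument is a direct packaging of Proposition \ref{prop:lb} with the standard normalization ${\rm Ric}=(1/2)g_K$ and the common dimension of maximal abelian subspaces in $\fm$.
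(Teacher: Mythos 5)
Your proof is correct and follows essentially the same route as the paper: both reduce the statement to Proposition \ref{prop:lb} (i) with $c=1/2$ for $g_K$, and both verify condition ${\rm (b)}'$ via the fact that every $Y\in\fm$ lies in a maximal abelian subspace of dimension ${\rm rank}\,M\geq 2$ (the paper phrases this through the polar isotropy action, writing $Y={\rm Ad}(k)Y'$ with $Y'\in\fa$). Your explicit case split between $\lambda_1<n/(n-1)$ (handled by ${\rm (a)}'$ since $\beta_{\widetilde{\Phi}_1}\geq 0$) and $\lambda_1=n/(n-1)$ is left implicit in the paper but is exactly the intended logic.
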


\begin{proof}
 It is well-known that, if $M$ is a symmetric space, the isotropy representation is a polar action and the section is given by a maximal abelian subspace $\fa$ in $\fm$. Namely, any element $Y\in \fm$ is expressed by $Y={\rm Ad}(k)Y'$ for some $k\in K$ and $Y'\in \fa$. Then, $\fa_{Y}:={\rm Ad}(k)\fa$ is a maximal abelian subspace in $\fm$ containing $Y$  since ${\rm dim}\fa_{Y}={\rm dim}\fa$. Because we assume ${\rm rank}M\geq 2$, it automatically satisfies that ${\rm dim}\fa_{Y}\geq 2$ for any $Y\in \fm$.  Thus, Proposition \ref{prop:lb} (i) implies the desired result.
 \end{proof}

By the classification result and the data of the first eigenvalue given in \cite{KOT}, we see that $M$ satisfies the inequality \eqref{eq:cond4} if and only if $M$ is either a strongly unstable RSS, a Hermitian symmetric space or one of the following symmetric spaces:
\begin{align*}
SU(3)/SO(3),\quad SU(4)/SO(4)\simeq SO(6)/SO(3)\times SO(3),\quad Spin(7),\quad G_{2}.
\end{align*}
Note that when $M=SU(4)/SO(4)$ or $Spin(7)$, then the equality holds in \eqref{eq:cond4}. Moreover, the rank of the above spaces are equal to either $2$ or $3$.  If $M$ is one of the above spaces, we conclude that $M$ has property $({\bf N}_1)$ by Corollary \ref{cor:lb}.
However, if $M$ is neither strongly unstable nor Hermitian symmetric and ${\rm rank}M\geq 4$, then \eqref{eq:cond4} does not hold, and hence,  the property $({\bf N}_1)$ does not follow from Corollary \ref{cor:lb} for the higher rank case.

Let us consider the case when {\rm rank}$M\leq 3$ and not satisfying \eqref{eq:cond4}. The remaining spaces are
\begin{align*}
G_{2}/SO(4),\quad SO(q+3)/SO(q)\times SO(3)\ (q\geq 4).
\end{align*}
We shall deal with the case when $M=G_{2}/SO(4)$ in the next section, and by using Corollary \ref{cor:cri1} (i) (or Proposition \ref{prop:keyN} (i)), we show that the property $({\bf N}_1)$ holds even in this case.  Summarizing these results, we obtain our main result Theorem \ref{thm:main2}.

 We mention the case of remaining rank $3$ symmetric space, that is, the $3$-plane oriented real Grassmannian manifold $\widetilde{G}_{3}(\R^{q+3})\simeq SO(q+3)/SO(q)\times SO(3)$ of $q\geq 4$. In order to examine the criteria given in Proposition \ref{prop:keyN} (i),  the minimal immersion $\widetilde{\Phi}_{1}$ associated with the first standard immersion $\Phi_1$ of  $\widetilde{G}_{3}(\R^{q+3})$ would be a reasonable choice. However, our claim here is that {\it the map $\widetilde{\Phi}_1$ does not satisfy the sufficient conditions for the property $({\bf N}_1)$ given in Proposition $\ref{prop:keyN}\ {\rm (i)}$ whenever $q\geq 4$ and $q\neq 6$.} Indeed, the condition given in Proposition \ref{prop:keyN}\ {\rm (i)}-(b) is reduced to  $\lambda_1=2cn/(n-1)=n/(n-1)$ and it does not hold when $(M,g)=(\widetilde{G}_{3}(\R^{q+3}),g_K)$ of $q\geq 4$.  We shall consider the condition given in Proposition \ref{prop:keyN}\ {\rm (i)}-(a). Whenever $q\neq 6$,  it is known that the first standard immersion $\Phi_1$ of $M=\widetilde{G}_{3}(\R^{q+3})$ is explicitly given by one of the following immersions (see \cite[Lemma 13]{Ohnita2} for details):
\begin{enumerate}
\item[(a)] Let ${G}_{3}(\R^{q+3})\simeq SO(q+3)/S(O(q)\times O(3))$ be the non-oriented real Grassmannian manifold. Then, $\widetilde{G}_{3}(\R^{q+3})$ is isometrically immersed into $G_{3}(\R^{q+3})$ by the $2$-fold covering map $\pi: \widetilde{G}_{3}(\R^{q+3})\to {G}_{3}(\R^{q+3})$. On the other hand, $G_{3}(\R^{q+3})$ is realized by a symmetric $R$-space in some Euclidean space $\R^{N}$  by an equivariant embedding $\varphi:{G}_{3}(\R^{q+3})\to \R^{N}$.  Then, we obtain an isometric immersion of $\widetilde{G}_{3}(\R^{q+3})$ into $\R^{N}$ by the composition map $\Phi:=\varphi\circ \pi:\widetilde{G}_{3}(\R^{q+3})\to \R^{N}$.
\item[(b)] Let $\wedge^{3}(\R^{q+3})=\{v_{1}\wedge v_{2}\wedge v_{3}\mid v_{i}\in \R^{q+3}\}$ be the vector space consisting of wedge products of $3$-vectors in $\R^{q+3}$. We define the canonical inner product on  $\wedge^{3}(\R^{q+3})$, and identify $\wedge^{3}(\R^{q+3})$ with the Euclidean space $\R^{N}$ of dimension $N=\begin{pmatrix}q+3\\ 3\end{pmatrix}$. Then, $\widetilde{G}_{3}(\R^{q+3})$ is naturally immersed into $\wedge^{3}(\R^{q+3})$, and we denote the  immersion by $\Psi: \widetilde{G}_{3}(\R^{q+3})\to \wedge^{3}(\R^{q+3})$.
\end{enumerate}

As stated in \cite{Ohnita2}, both $\Phi$ and $\Psi$ provide minimal immersions $\widetilde{\Phi}, \widetilde{\Psi}$ of $ \widetilde{G}_{3}(\R^{q+3})$ into a hypersphere $S^{N-1}(r)$ of some radius $r$. Thus,  by Proposition \ref{prop:keyN} (i)-(a), a sufficient condition for the property $({\bf N}_1)$ is given by \eqref{eq:q2}. 
However, {both $\widetilde{\Phi}$ and $\widetilde{\Psi}$ do  not satisfy \eqref{eq:q2}.} Indeed, the minimal immersion $\widetilde{\varphi}: G_{3}(\R^{q+3})\to S^{N-1}(r)$ does not satisfy  \eqref{eq:q2} because there exists a stable closed geodesic in $G_{3}(\R^{q+3})$ (since $G_{3}(\R^{q+3})$ is not simply-connected). Thus, neither does the map $\widetilde{\Phi}=\widetilde{\varphi}\circ \pi$ since  $\pi$ is a covering map and  the inequality \eqref{eq:q2} is determined by a local computation of the map (More precisely, we have $\beta_\Phi=\beta_\varphi$ since $\Phi$ and $\varphi$ are equivariant under the isometric action of $SO(q+3)$). On the other hand, a direct computation shows that $\beta_{\widetilde{\Psi}}=0$. In this case, \eqref{eq:q2} is reduced to the condition \eqref{eq:cond4}, and hence, $\widetilde{\Psi}$ does not satisfy \eqref{eq:q2}.

\section{The case of $G_{2}/SO(4)$}\label{sec:g2}
In this section, we consider the case when $M=G_{2}/SO(4)$, a simply-connected exceptional compact symmetric space of rank $2$, and show the following result.
\begin{theorem}\label{thm:G}
The exceptional compact symmetric space $G_2/SO(4)$ has property $({\bf N}_1)$.
\end{theorem}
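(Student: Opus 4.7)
The plan is to apply Corollary \ref{cor:cri1} (i) (equivalently Proposition \ref{prop:keyN} (i)-(a)) to an explicit $G_{2}$-equivariant isometric immersion $F: G_{2}/SO(4) \to \R^{N}$. Since the standard criterion \eqref{eq:cond4} fails and ${\rm rank}\,M = 2$ excludes the ``generic rank'' approach of Corollary \ref{cor:lb}, one must exhibit a specific $F$ for which $\beta_{F}$ is large enough to offset the large first eigenvalue. A natural candidate is obtained from the classical realization of $M$ as the space of associative $3$-planes in ${\rm Im}\,\mathbb{O} \cong \R^{7}$: this realizes $M$ as a submanifold of $\widetilde{G}_{3}(\R^{7})$, and composition with the Pl\"ucker embedding gives an equivariant isometric immersion $F: M \to \wedge^{3}\R^{7} \cong \R^{35}$ (after rescaling). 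Alternatively, one could consider the $i$-th standard immersion $\Phi_{i}$ for $i \geq 2$ whose larger eigenvalue $\lambda_{i}$ makes the minimal sphere radius smaller and the curvature term more favorable; the approach is the same.

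First, I would fix the decomposition $\mathfrak{g}_{2} = \mathfrak{so}(4) \oplus \fm$ and identify $\fm \simeq T_{o}M$ explicitly as the unique irreducible $8$-dimensional $SO(4) = Sp(1) \cdot Sp(1)$-module. Choosing a maximal abelian subspace $\fa \subset \fm$ of dimension $2$, I would write down the restricted root system and root spaces, which for $G_{2}/SO(4)$ is of type $G_{2}$. Because the isotropy action is polar with section $\fa$, every unit vector $Z \in \fm$ is $SO(4)$-conjugate to a unit vector in $\fa$, so both the shape operator along $Z$ and the norm $\|B(Z,Z)\|^{2}$ only need to be computed for $Z \in \fa$ and then minimized over the Weyl chamber. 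This reduces the computation of $\beta_{F}$ to an explicit optimization of a quadratic expression in two variables.

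Second, using the $G_{2}$-equivariance of $F$, I would decompose the normal space $T^{\perp}_{o}F$ into isotypic components under $SO(4)$ and express the second fundamental form $B$ in terms of the highest weight data of the corresponding $G_{2}$-representation on $\R^{N}$. This gives closed-form expressions for the shape operators in a basis adapted to the root decomposition of $\fm$, and in particular an explicit formula for $\|B(Z,Z)\|^{2}$ on $\fa$. Combined with the Einstein condition ${\rm Ric} = (1/2) g_{K}$ and $n = 8$, the criterion $Q_{F} < \beta_{F} g$ reduces to verifying a single numerical inequality once $\beta_{F}$ and the mean curvature of $F$ are computed.

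The main obstacle, and the reason an explicit construction is unavoidable here, is to obtain a sharp enough lower bound for $\beta_{F}$, i.e.\ to rule out the existence of a unit vector $Z \in \fa$ for which $B(Z,Z)$ is so small that the criterion fails. Geometrically, this amounts to showing that no tangent direction at $o$ gives a geodesic of $M$ that is also a geodesic (line) in the ambient Euclidean space, and quantitatively it requires controlling the smallest eigenvalue of the natural quadratic form $Z \mapsto \|B(Z,Z)\|^{2}$ on the Weyl chamber of $\fa$. Once this quantitative estimate is in hand, Corollary \ref{cor:cri1} (i) immediately yields the property $({\bf N}_{1})$ for $G_{2}/SO(4)$.
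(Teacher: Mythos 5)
Your overall strategy --- produce an explicit $G_2$-equivariant isometric immersion $F$, use polarity of the isotropy action to reduce $\|B(Z,Z)\|^{2}$ to a two-variable optimization on a maximal abelian subspace, and then feed $\beta_{F}$ into Corollary \ref{cor:cri1} (i) / Proposition \ref{prop:keyN} (i)-(a) --- is exactly the strategy of the paper, and your identification of the crux (a \emph{sharp} lower bound for $\beta_{F}$, since mere positivity is not enough) is also correct. But the proposal stops exactly where the proof has to begin. The criterion is numerically tight here: for the immersion used in the paper one gets ${\rm Ric}=\frac{1}{7}g$, $\frac{n}{r^{2}}=\frac{1}{3}$ and $\beta_{F}=\frac{1}{14}$, so the required inequality reads $\frac{1}{2}\left(\frac{n}{r^{2}}-\beta_{F}\right)=\frac{11}{84}<\frac{12}{84}=\frac{1}{7}$, a margin of $\frac{1}{84}$. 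None of the quantities $r$, $H$, $\beta_{F}$ is computed in your proposal, so the decisive step is missing.

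Moreover, the specific candidate you put forward --- the restriction to the associative Grassmannian of the Pl\"ucker-type immersion into $\wedge^{3}\R^{7}\cong\R^{35}$ --- is precisely the immersion that the paper's Section \ref{subsec:RSS} flags as problematic: for $\widetilde{G}_{3}(\R^{q+3})$ one has $\beta_{\widetilde{\Psi}}=0$ for that immersion, so it fails the criterion on the ambient Grassmannian. Restricting to the totally geodesic $G_{2}/SO(4)$ might avoid the degenerate directions, but you would have to prove and quantify this, which is again the missing computation. The paper instead takes the symmetric $R$-space embedding of $G_{3}(\R^{7})\simeq SO(7)/S(O(3)\times O(4))$ into the traceless symmetric matrices $\fp\subset\mathfrak{sl}(7,\R)\cong\R^{27}$, composed with the totally geodesic immersion of $G_{2}/SO(4)$; and all computations are carried out not with the $G_{2}$-type restricted root system of $G_{2}/SO(4)$ (as you propose) but with the $A_{6}$ restricted root system of the ambient $SL(7,\R)/SO(7)$. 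There the tangent space $\fm$ splits into four abelian planes, each contained in a sum of root spaces for \emph{strongly orthogonal} roots, which makes the Takagi--Takahashi shape-operator formula collapse (Lemmas \ref{lem:keyR} and \ref{lem:keyR1}) and shows that $B_{\Phi}$ is isotropic with $|B_{\Phi}(v,v)|^{2}=\frac{1}{14}$. A further nontrivial step you omit is verifying that the composed immersion is minimal into the hypersphere (Proposition \ref{prop:keyg} (i)), which is needed to identify $r$ and is not automatic from minimality of the $R$-space embedding of the larger Grassmannian. In short: the framework is right, but the theorem lives in the numbers, and those are absent.
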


To prove this, we shall explicitly construct an immersion $\Phi$ of $G_{2}/SO(4)$ into the Euclidean space, and apply Corollary \ref{cor:cri1} to the map $\Phi$ combining with geometric properties of $\Phi$.  The description of $\Phi$ is based on geometry of symmetric R-space. We thus recall some basic facts on symmetric R-spaces and prove some lemmas. 

\subsection{Preliminaries on (symmetric) R-space}\label{subs:symR}
We refer to \cite{BCO} for details. 

 Let $U/U'$ be a Riemannian symmetric space of noncompact type, where $U$ is a connected semi-simple Lie group and $U'$ is a connected closed subgroup of $U$. We denote the Lie algebra of $U$ and $U'$ by $\fu$ and $\fu'$, respectively. The Cartan involution on $\fu$ is denoted by $\theta: \fu\to \fu$, and we write the associated Cartan decomposition by $\fu=\fu'\oplus \fp$.  As usual, we identify $\fp$ with $T_o(U/U')$.  We define an inner product $\langle,\rangle_\fu$ on $\fu$ by  $\langle X,Y\rangle_{\fu}:=-{\bf B}(X, \theta Y)$ for $X, Y\in \fu$, where  ${\bf B}$ is the Killing-form on $\fu$.  The restriction on $\fp$ defines an ${\rm Ad}_{U}(U')$-invariant inner product $\langle,\rangle_\fp$ on $\fp$. In this way, we identify $\fp$  with the Euclidean space. 
  
 We take a maximal abelian subspace $\fa$ of $\fp$. Then, we have the following direct-sum decomposition of $\fu$ (called the {\it restricted root decomposition} with respect to $\fa$):
 \begin{align*}
\fu=\fu_0\oplus \bigoplus_{\alpha\in \Sigma}\fu_{\alpha}.
\end{align*}
Here, $\alpha$ is a non-zero element in the dual space $\fa^*$ of $\fa$ such that $\fu_{\alpha}:=\{X\in \fu\mid {\rm ad}(h)X=\alpha(h)X\ \forall h\in \fa\}\neq \phi$, and $\fu_0=\fu_0'\oplus \fa$ where $\fu_0'=\{Z\in \fu'\mid [Z,h]=0\ \forall h\in\fa\}$. We call $\alpha$ a {\it root}, and denote the set of roots by $\Sigma$. It is known that $\Sigma$ becomes a root system. We define an ordering on $\fa$ by using the simple roots of $\Sigma$, and denote the set of positive roots by $\Sigma_{+}$. The restricted root decomposition satisfies the following properties:
\begin{itemize}
\item $[\fu_{\alpha}, \fu_{\beta}]\subset \fu_{\alpha+\beta}$ for any $\alpha, \beta\in \Sigma$, where $[\fu_{\alpha}, \fu_{\beta}]=0$ if $\alpha +\beta \not\in \Sigma$. 
\item Suppose $\alpha,\beta\in \Sigma\cup\{0\}$ and $\alpha\neq \beta$. Then, $\fu_\alpha$ and $\fu_\beta$ are orthogonal to each other if and only if $\beta\neq -\alpha$.
\end{itemize}
% $\fg_0=\fk_0\oplus\fa$, where $\fk_0=\{X\in \fk: [X, H]=0\ \forall H\in \fa\}$ is the centralizer of $\fa$ in $\fk$ (cf. \cite[Section 13.1]{BCO}).  Moreover, we have 

% For any $\alpha\in \Sigma$, it is easy to see that $\theta\fg_{\alpha}=\fg_{-\alpha}$ and $\theta|_{\fg_{\alpha}}: \fg_{\alpha}\to \fg_{-\alpha}$ is an isomorphism. Thus, we put
%$
%X_{-\alpha}:=\theta X_{\alpha}\in \fg_{-\alpha}
%$
%for $X_{\alpha}\in \fg_{\alpha}$. Moreover, we set
%\[
%X_{\alpha}^{\fk}:=\frac{1}{2}(X_{\alpha}+X_{-\alpha})\in \fk,\quad {\rm and}\quad X_{\alpha}^{\fp}:=\frac{1}{2}(X_{\alpha}- X_{-\alpha})\in \fp
%\]
%so that $X_{\alpha}=X_{\alpha}^\fk+X_{\alpha}^\fp$. 

We set
$\fu'_{\alpha}:=\fu'\cap(\fu_{\alpha}\oplus\fu_{-\alpha})$, 
$\fp_{\alpha}:=\fp\cap(\fu_{\alpha}\oplus\fu_{-\alpha})$
so that 
\[
\fu'=\fu'_0\oplus \bigoplus_{\alpha\in \Sigma_+}\fu'_\alpha,\quad \fp=\fa\oplus\bigoplus_{\alpha\in \Sigma_+}\fp_{\alpha}.
\]
Note that $\fu'_\alpha=\fu'_{-\alpha}$ and $\fp_{\alpha}=\fp_{-\alpha}$ for any $\alpha\in \Sigma$. Moreover, by letting $\fp_0=\fa$, we have
\begin{align}\label{eq:brakp}
[\fu'_{\alpha}, \fu'_{\beta}]\subset \fu'_{\alpha+\beta}\oplus \fu'_{\alpha-\beta}, \quad [\fu'_{\alpha}, \fp_{\beta}]\subset \fp_{\alpha+\beta}\oplus \fp_{\alpha-\beta},\quad [\fp_{\alpha}, \fp_{\beta}]\subset \fu'_{\alpha+\beta}\oplus \fu'_{\alpha-\beta}
\end{align}
for any $\alpha, \beta\in \Sigma\cup \{0\}$ since $[\fu_{\alpha}, \fu_{\beta}]\subset \fu_{\alpha+\beta}$, $[\fu', \fu']\subset \fu'$, $[\fu',\fp]\subset \fp$ and  $[\fp,\fp]\subset \fu'$.

It is known that the isotropy representation of $U/U'$ is equivalent to ${\rm Ad}|_{U'}: U'\to O(\fp)$.
 We take an element $\eta\in \fp$ and consider the adjoint orbit through $\eta$
 \[
\mathcal{O}_\eta:=U'\cdot \eta={\rm Ad}_{U}(U')\eta.
 \]
The orbit $\mathcal{O}_\eta$ is called an {\it R-space}. We denote  the stabilizer subgroup at $\eta$ by $U''$ so that $\mathcal{O}_\eta$ is diffeomorphic to $U'/U''$. We induce a Riemannian metric on $\mathcal{O}_{\eta}$ from $\fp$ and denote the isometric embedding of $\mathcal{O}_\eta$ into $\fp$ by
 \[
 \varphi: \mathcal{O}_\eta\to \fp.
 \]
 
Suppose that $\fa$ is a maximal abelian subspace in $\fp$ containing $\eta$. Then the geometry of $\varphi$ is described by using the associated restricted root system. For example, the tangent space $T_{\eta}\mathcal{O}_\eta$ and the normal space $T^{\perp}_{\eta}\mathcal{O}_\eta$ at $\eta$  are expressed by (see \cite[Subsection 2.3.2]{BCO})
 \begin{align}\label{eq:tannor}
T_{\eta}\mathcal{O}_\eta=\bigoplus_{\alpha\in \Sigma_{>0}} \fp_{\alpha},\quad 
T^{\perp}_{\eta}\mathcal{O}_\eta=\fa\oplus \bigoplus_{\alpha\in \Sigma_{0}}\fp_{\alpha},
 \end{align}
 where we put $\Sigma_{>0}:=\{\alpha\in \Sigma_{+}\mid \alpha(\eta)>0\}$ and $\Sigma_{0}:=\{\alpha\in \Sigma_{+}\mid \alpha(\eta)=0\}$. 
According to  \eqref{eq:tannor}, we write a tangent vector $X\in T_{\eta}\mathcal{O}_\eta$ and a normal vector $\xi\in T^\perp_{\eta}\mathcal{O}_\eta$ by
 \begin{align*}
 X=\sum_{\alpha\in \Sigma_{>0}} X_{\alpha}\quad (X_{\alpha}\in \fp_{\alpha})\quad {\rm and}\quad  \xi=\xi_{0}+\sum_{\mu\in \Sigma_{0}}\xi_{\mu}\quad (\xi_{0}\in\fa,\ \xi_{\mu}\in \fp_{\mu}).
 \end{align*}

We denote the second fundamental form and the shape operator of $\varphi$ by $B_{\varphi}$ and $A_{\varphi}$, respectively.  Then, we have the following formula:
 
 \begin{proposition}[cf. \cite{TT}]\label{lem:shape}
 For any unit normal vector $\xi\in T^\perp_{\eta}\mathcal{O}_\eta$, we have 
  \begin{align}\label{eq:A0}
 (A_{\varphi})_{\xi}(X)=\sum_{\alpha\in \Sigma_{>0}}\Big{\{}-\frac{\alpha(\xi_{0})}{\alpha(\eta)}X_{\alpha}-\frac{1}{\alpha(\eta)^{2}}\sum_{\mu\in \Sigma_{0}}
 [\xi_{\mu}, [\eta, X_{\alpha}]]\Big{\}}
 \end{align}
  for any $X\in T_{\eta} \mathcal{O}_\eta$.
 \end{proposition}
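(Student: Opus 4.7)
The plan is to use the $U'$-action on $\fp$ to produce tangent vector fields on $\mathcal{O}_\eta$ whose ambient (Euclidean) derivatives are easy to compute, and then exploit the skew-symmetry of $\ad(Z)$ for $Z\in \fu'$ with respect to $\langle\cdot,\cdot\rangle_\fu$, which follows from $\theta Z=Z$ and the invariance of the Killing form.

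First, for each $\alpha\in \Sigma_{>0}$ and $X_\alpha\in \fp_\alpha$ I would produce the unique $Z_\alpha\in \fu'_\alpha$ satisfying $[Z_\alpha,\eta]=X_\alpha$. Since $\ad(\eta)^2$ acts as $\alpha(\eta)^2\cdot\mathrm{id}$ on $\fu_{\pm\alpha}$, and hence on $\fu'_\alpha$, this $Z_\alpha$ is given by
\[
Z_\alpha=-\alpha(\eta)^{-2}[\eta,X_\alpha].
\]
The map $y\mapsto [Z_\alpha,y]$ is a linear vector field on $\fp$ that restricts to a tangent vector field $\tilde X_\alpha$ along $\mathcal{O}_\eta$ with $\tilde X_\alpha(\eta)=X_\alpha$.

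Since $\tilde X_\alpha$ is linear in $y$, its ambient derivative in any direction $Y\in T_\eta\mathcal{O}_\eta$ is $\overline{\nabla}_Y\tilde X_\alpha=[Z_\alpha,Y]$. Combining the symmetry of the second fundamental form with the skew-symmetry of $\ad(Z_\alpha)$ on $\fu$, one gets, for any tangent $Y$,
\[
\langle A_\xi X_\alpha,Y\rangle=\langle B(Y,X_\alpha),\xi\rangle=\langle [Z_\alpha,Y],\xi\rangle=\langle Y,[\xi,Z_\alpha]\rangle,
\]
so that $A_\xi X_\alpha$ equals the tangential projection of $[\xi,Z_\alpha]$ onto $T_\eta\mathcal{O}_\eta$. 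Decompose $\xi=\xi_0+\sum_{\mu\in\Sigma_0}\xi_\mu$ with $\xi_0\in\fa$ and $\xi_\mu\in\fp_\mu$. Writing $Z_\alpha=W+\theta W$ with $W\in\fu_\alpha$ and using $X_\alpha=-\alpha(\eta)(W-\theta W)$ gives
\[
[\xi_0,Z_\alpha]=\alpha(\xi_0)(W-\theta W)=-\frac{\alpha(\xi_0)}{\alpha(\eta)}X_\alpha\in\fp_\alpha,
\]
and, using the formula for $Z_\alpha$ above,
\[
[\xi_\mu,Z_\alpha]=-\alpha(\eta)^{-2}[\xi_\mu,[\eta,X_\alpha]]\in\fp_{\mu+\alpha}\oplus\fp_{\mu-\alpha}
\]
by \eqref{eq:brakp}. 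Since $\mu(\eta)=0$ while $\alpha(\eta)>0$, we have $(\mu\pm\alpha)(\eta)\neq 0$, so both summands lie in $T_\eta\mathcal{O}_\eta$. Thus $[\xi,Z_\alpha]$ is already tangent; no further projection is needed, and summing over $\alpha\in\Sigma_{>0}$ produces formula \eqref{eq:A0}.

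The main technical point I expect to dwell on is verifying that the $\xi_\mu$-contribution is automatically tangent: this depends precisely on $\mu(\eta)=0$, $\alpha(\eta)>0$, and the ambient bracket relations \eqref{eq:brakp}, and this is where the adapted decomposition \eqref{eq:tannor} together with $\eta\in\fa$ really plays its role. Everything else is a direct application of $\ad$-invariance of $\langle\cdot,\cdot\rangle_\fu$ together with standard restricted-root manipulations.
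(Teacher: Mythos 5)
Your proposal is correct and follows essentially the same route as the paper's proof: produce $Z_\alpha=-\alpha(\eta)^{-2}[\eta,X_\alpha]\in\fu'$ generating the tangent direction $X_\alpha$, use the skew-symmetry of ${\rm ad}(Z_\alpha)$ on $\fp$ (the paper phrases this as $Z^*$ being a Killing field of the flat metric on $\fp$, which is the same fact) to identify $A_\xi X_\alpha$ with the tangential part of $[\xi,Z_\alpha]$, and then verify via the bracket relations \eqref{eq:brakp} and $(\mu\pm\alpha)(\eta)\neq 0$ that this bracket is already tangent. The only cosmetic differences are that you argue root-space by root-space rather than with a single $Z=\sum_\alpha Z_\alpha$, and you evaluate the $\xi_0$-term via the $W+\theta W$ decomposition instead of the identity $[\xi_0,[\eta,X_\alpha]]=\alpha(\xi_0)\alpha(\eta)X_\alpha$; both computations are equivalent.
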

 
 \begin{proof}
 We give a proof for the sake of convenience of the reader. We remark that the sign is different from \cite{TT} at some places because our description is based on the noncompact dual of  \cite{TT}. 
 
 Since $\mathcal{O}_\eta={\rm Ad}_{U}(U')\eta$, the tangent space of $\mathcal{O}_\eta$ at $\eta$ is written by $T_\eta\mathcal{O}_\eta=\{Z^*_\eta\mid Z\in \fu'\}$, where 
 \begin{align}\label{eq:fv}
 Z^*_w:=\frac{d}{dt}{\rm Ad}({\rm exp}tZ)w\Big{|}_{t=0}={\rm ad}(Z)w=[Z,w]\quad \textup{ for $w\in \fp$}.
 \end{align}
  Note that $Z^*$ defines a Killing vector field on $\fp$ since ${\rm Ad}_{U}(U')\subset O(\fp)$. Fix an arbitrary $X\in T_\eta\mathcal{O}_\eta$, and we take $Z\in \fu'$ so that $Z^*_\eta=[Z,\eta]=X$. More precisely, $Z$ is given by
 \begin{align}\label{eq:z}
 Z=\sum_{\alpha\in \Sigma_{>0}} -\frac{1}{\alpha(\eta)^2}[\eta, X_\alpha].
 \end{align}
Indeed, we see $Z\in \fu'$ since $[\fp,\fp]\subset \fu'$ and a direct computation shows that $[Z,\eta]=X$.

Take any $Y\in T_\eta\mathcal{O}_\eta$ and $\xi\in T_\eta^\perp\mathcal{O}_\eta$, and we extend $\xi$ so that $\langle Z^*,\xi\rangle=0$ on a neighborhood of $\eta$ in $\fp$. Then, we have
\begin{align}\label{eq:shape1}
\langle (A_{\varphi})_\xi(X),Y\rangle_{\eta}=\langle (A_{\varphi})_\xi(Y),X\rangle_{\eta}=-\langle \overline{\nabla}_Y\xi, Z^*\rangle_{\eta}=\langle \overline{\nabla}_YZ^*,\xi\rangle_{\eta}=-\langle \overline{\nabla}_\xi Z^*,Y\rangle_{\eta},
\end{align}
where $\overline{\nabla}$ is the Levi-Civita connection on $\fp$, and we used the fact that $Z^*$ is a Killing vector field on $\fp$, which implies $\langle \overline{\nabla}_uZ^*,v\rangle+\langle \overline{\nabla}_vZ^*,u\rangle=0$ for any $u,v\in T_\eta\fp$.  Since $Y$ is arbitrary,  \eqref{eq:shape1} shows that
$
(A_{\varphi})_{\xi}(X)=-(\overline{\nabla}_\xi Z^*)^\top.
$
Moreover, by taking a curve $c:(-\epsilon,\epsilon)\to \fp$ satisfying that $c(0)=\eta$ and $c'(0)=\xi$, we see
\begin{align*}
\overline{\nabla}_\xi Z^*=\frac{d}{ds} Z^{*}(c(s))\Big{|}_{s=0}=\frac{d}{ds}\frac{d}{dt}{\rm Ad}({\rm exp} tZ)c(s)\Big{|}_{s=t=0}=\frac{d}{dt}{\rm Ad}({\rm exp}tZ)\xi\Big{|}_{t=0}=[Z,\xi].
\end{align*}
Therefore, we obtain 
\[
(A_{\varphi})_{\xi}(X)=[\xi,Z]^{\top}.
\]
Here, we have
\begin{align}\label{eq:zxi}
[\xi,Z]&=\sum_{\alpha\in \Sigma_{>0}}\Big{\{}-\frac{1}{\alpha(\eta)^2}[\xi_0, [\eta, X_{\alpha}]]-\frac{1}{\alpha(\eta)^2}\sum_{\mu\in\Sigma_0}[\xi_u,[\eta,X_\alpha]]\Big{\}}.
%&=\sum_{\alpha\in \Sigma_{>0}}\Big{\{}-\frac{\alpha(\xi_0)}{\alpha(\eta)}X_\alpha-\frac{1}{\alpha(\eta)^2}\sum_{\mu\in\Sigma_0}[\xi_u, [\eta,X_\alpha]]\Big{\}},
\end{align}
since $Z$ is given by \eqref{eq:z}.  In \eqref{eq:zxi}, we shall show
\begin{align*}%\label{eq:shape2}
[\xi_0,[\eta, X_{\alpha}]]=\alpha(\xi_0)\alpha(\eta)X_\alpha\quad {\rm and}\quad [\xi,Z]=[\xi,Z]^{\top}
\end{align*}
which implies the desired formula \eqref{eq:A0}.
Indeed, by definition of $\fp_{\gamma}$ ($\gamma\in \Sigma_+$), any element $X_{\gamma}\in \fp_\gamma$ is written by
$
X_\gamma=\widetilde{X}_\gamma+\widetilde{X}_{-\gamma}
$
for some elements $\widetilde{X}_{\gamma}\in \fu_{\gamma}$ and $\widetilde{X}_{-\gamma}\in \fu_{-\gamma}$. Thus, we have
\begin{align*}%\label{eq:hb}
[h_1, X_\gamma]=\gamma(h_1)(\widetilde{X}_\gamma-\widetilde{X}_{-\gamma}),\quad [h_2,[h_1, X_\gamma]]=\gamma(h_2)\gamma(h_1)X_\gamma
\end{align*}
for any $h_1,h_2\in \fa$. This implies $[\xi_0,[\eta, X_{\alpha}]]=\alpha(\xi_0)\alpha(\eta)X_\alpha$.
On the other hand, we have that $[\xi_\mu,[\eta, X_\alpha]]\in \fp_{\mu+\alpha}\oplus \fp_{\mu-\alpha}$ by \eqref{eq:brakp}. Since $\mu\in \Sigma_0$ and $\alpha\in \Sigma_{>0}$, we see $(\mu\pm \alpha)(\eta)\neq 0$, and hence, $[\xi_\mu,[\eta, X_\alpha]]\in T_\eta\mathcal{O}_\eta$. Therefore,  we see $[\xi,Z]\in T_\eta\mathcal{O}_\eta$, i.e. $[\xi,Z]=[\xi,Z]^{\top}$ as required.
  \end{proof}
 
If $X$ belongs to a special subspace in $T_{\eta}\mathcal{O}_\eta$,  we obtain the following simple formulas:
 
 \begin{lemma}\label{lem:keyR}
 Let $S$ be a subset of $\Sigma_{>0}$ and set 
\[
\fp_S:=\bigoplus_{\alpha\in S}\fp_\alpha\subset T_\eta \mathcal{O}_\eta.
\]
Suppose $S$ consists of strongly orthogonal roots i.e. $\alpha\pm\beta\notin \Sigma$ for any $\alpha, \beta\in S$. Then we have
\begin{align}\label{eq:R1}
B_\varphi(X,Y)=\sum_{\alpha\in S}- \frac{\langle X_\alpha, Y_\alpha\rangle}{\alpha(\eta)}\alpha^\sharp\in\fa\quad \textup{for any $X,Y\in \fp_S$},
\end{align}
 where $\alpha^\sharp$ denotes the dual vector of the root $\alpha$ with respect to the inner product on $\fa$. 

Furthermore, we have
\begin{align}\label{eq:R2}
|B_\varphi(X,X)|^2=\sum_{\alpha\in S}\frac{|\alpha|^{2}}{\alpha(\eta)^{2}} |X_\alpha|^4\quad \textup{for any $X\in \fp_{S}$.}
\end{align}
 \end{lemma}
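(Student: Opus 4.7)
The plan is to derive both formulas in \eqref{eq:R1} and \eqref{eq:R2} from the shape-operator description in Proposition~\ref{lem:shape} by component-wise duality. Recall that $B_\varphi(X,Y)$ is recovered from $\langle B_\varphi(X,Y),\xi\rangle = \langle (A_\varphi)_\xi X, Y\rangle$, so using the decomposition $T^\perp_\eta\mathcal{O}_\eta = \fa \oplus \bigoplus_{\mu \in \Sigma_0}\fp_\mu$ I will compute the $\fa$-part and each $\fp_\mu$-part of $B_\varphi(X,Y)$ separately. Throughout I will exploit the orthogonality of distinct root spaces $\fp_\alpha,\fp_\beta$ noted earlier.

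For the $\fa$-component, take $\xi=\xi_0\in\fa$ in \eqref{eq:A0}; since $X=\sum_{\alpha\in S}X_\alpha$ has contributions only from roots in $S$, the formula simplifies to $(A_\varphi)_{\xi_0}X = -\sum_{\alpha\in S}\frac{\alpha(\xi_0)}{\alpha(\eta)}X_\alpha$. Pairing with $Y=\sum_{\beta\in S}Y_\beta$ and using mutual orthogonality of root spaces gives $\langle B_\varphi(X,Y),\xi_0\rangle = -\sum_{\alpha\in S}\frac{\langle X_\alpha,Y_\alpha\rangle}{\alpha(\eta)}\alpha(\xi_0)$, whose right-hand side is precisely the inner product of $\xi_0$ with the element claimed on the right of \eqref{eq:R1}.

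For each $\fp_\mu$-component with $\mu\in\Sigma_0$, I need to show the pairing $\langle(A_\varphi)_{\xi_\mu}X,Y\rangle$ vanishes. By \eqref{eq:A0} and \eqref{eq:brakp}, $[\xi_\mu,[\eta,X_\alpha]]$ lies in $\fp_{\mu+\alpha}\oplus\fp_{\mu-\alpha}$, so after pairing with $Y_\beta\in\fp_\beta$ only the cases $\beta=\pm(\mu\pm\alpha)$ can contribute. A short case check rules out every possibility: $\mu=\alpha+\beta$ is forbidden since $(\alpha+\beta)(\eta)>0$ whereas $\mu(\eta)=0$; $\mu=\pm(\alpha-\beta)$ with $\alpha\neq\beta$ is forbidden by the strong orthogonality hypothesis, which asserts $\alpha-\beta\notin\Sigma$; and when $\alpha=\beta$ the remaining possibilities collapse to $\mu=0$ or $\mu=\pm 2\alpha$, both excluded by $\mu\in\Sigma_0\subset\Sigma_+$ together with $(\pm 2\alpha)(\eta)\neq 0$. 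This proves $B_\varphi(X,Y)\in\fa$ and establishes \eqref{eq:R1}.

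Finally, \eqref{eq:R2} will follow once we know $\alpha^\sharp\perp\beta^\sharp$ for distinct $\alpha,\beta\in S$. This is the standard root-system fact that strongly orthogonal roots are orthogonal: if $\langle\alpha,\beta\rangle\neq 0$, the $\alpha$-string through $\beta$ would have length at least two, forcing one of $\beta\pm\alpha$ into $\Sigma$, a contradiction. Hence the sum in \eqref{eq:R1} with $X=Y$ is an orthogonal decomposition in $\fa$, and expanding the norm with $|\alpha^\sharp|^2=|\alpha|^2$ yields the desired identity. The genuine technical step, where the strong orthogonality hypothesis is actually used, is the case-analysis eliminating the $\fp_\mu$-components of $B_\varphi$; everything else is bookkeeping with the restricted root decomposition.
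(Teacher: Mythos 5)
Your proof is correct and follows essentially the same route as the paper's: both extract $B_\varphi(X,Y)$ from the shape-operator formula \eqref{eq:A0}, use strong orthogonality to show the $\fp_\mu$-contributions ($\mu\in\Sigma_0$) pair to zero with $Y$ (you via the bracket relations \eqref{eq:brakp} on the $\fp_\gamma$'s, the paper via ${\rm Ad}$-invariance of the Killing form and orthogonality of the $\fu_\gamma$'s — the same case analysis in disguise), and then deduce \eqref{eq:R2} from the fact that strongly orthogonal roots are orthogonal (which you prove by the root-string argument and the paper cites from Knapp). The differences are cosmetic.
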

\begin{proof}
Suppose $S$ consists of strongly orthogonal roots. Fix arbitrary elements $\alpha,\beta\in S\subset \Sigma_{>0}$ (possibly $\alpha=\beta$)  and $\mu\in \Sigma_0$.  Then, for any $X_\alpha\in \fp_\alpha$, $Y_\beta\in \fp_\beta$ and $\xi_\mu\in \fp_\mu$, we have
\begin{align*}
\langle [\xi_\mu, [\eta, X_\alpha]], Y_\beta\rangle_{\fp}={\bf B}([\xi_\mu, [\eta, X_\alpha]], Y_{\beta})=-{\bf B}([\eta,X_\alpha], [\xi_\mu, Y_\beta])=\langle [\eta,X_\alpha], [\xi_\mu, Y_\beta]\rangle_{\fu}
\end{align*}
since the Killing form ${\bf B}$ is ${\rm Ad}(U)$-invariant. By properties of root decomposition, we have
\begin{align*}
&[\eta, X_\alpha]\in \fu_{\pm \alpha}\quad {\rm and}\quad [\xi_\mu,Y_\beta]\in \fu_{\pm(\mu+\beta)}\oplus\fu_{\pm(\mu-\beta)},
\end{align*}
where we put $\fu_{\pm \gamma}=\fu_\gamma\oplus \fu_{-\gamma}$. Since $\alpha\pm \beta\not\in \Sigma$ by assumption, we have $\mu\pm \beta\neq \pm \alpha$. Therefore, $[\eta, X_\alpha]$ and $[\xi_\mu, Y_\beta]$ are orthogonal to each other, and hence, we obtain 
\[
\langle [\xi_\mu, [\eta, X_\alpha]], Y_\beta\rangle_{\fp}=0.
\]

 In particular, for any $X, Y\in\fp_{S}$ and $\xi\in T^\perp_{\eta}\mathcal{O}_\eta$, the formula \eqref{eq:A0} implies that 
\begin{align*}
\langle (A_{\varphi})_{\xi}(X),Y\rangle=\sum_{\alpha\in S}-\frac{\alpha(\xi_0)}{\alpha(\eta)} \langle X_\alpha, Y\rangle=\sum_{\alpha\in S}-\frac{\alpha(\xi_0)}{\alpha(\eta)} \langle X_\alpha, Y_\alpha\rangle.
\end{align*}
 Therefore, by using an orthonormal basis $\{h_{k}\}_{k}$ of $\fa$,   we see
\[
B_\varphi(X, Y)=\sum_{k}\langle A^\varphi_{h_k}(X),Y\rangle h_k=\sum_k\sum_{\alpha\in S}-\frac{\alpha(h_k)}{\alpha(\eta)}\langle X_\alpha,  Y_\alpha\rangle h_k=\sum_{\alpha\in S}-\frac{\langle X_\alpha,  Y_\alpha\rangle}{\alpha(\eta)}\alpha^\sharp
\]
for any $X, Y\in\fp_{S}$. This proves the formula \eqref{eq:R1}.

Furthermore,  we have 
\[
|B_{\varphi}(X,X)|^{2}=\sum_{\alpha,\beta\in S}\frac{|X_{\alpha}|^{2}|X_{\beta}|^{2}}{\alpha(\eta)\beta(\eta)}\langle \alpha,\beta\rangle.
\]
Since $\Sigma$ is an abstract root system in the sense of \cite{Knapp} (see \cite[Corollary 6.53]{Knapp}),  Proposition 2.48 (f) in \cite{Knapp} shows that if $\alpha\pm\beta \notin \Sigma\cup \{0\}$, then $\langle \alpha,\beta\rangle=0$. Because we suppose $\alpha\pm\beta\not\in \Sigma$ for any $\alpha,\beta\in S\subset \Sigma_{>0}$, $\langle \alpha,\beta\rangle\neq 0$ if and only if $\alpha=\beta$.
  Therefore,  we obtain \eqref{eq:R2}.
\end{proof}
 
In the following, we restrict our attention to a special element in $\fp$. Suppose that $\eta$ is an element in $\fp$ such that ${\rm ad}(\eta): \fu\to \fu$ has exactly three eigenvalues $0$, $1$ and $-1$. We denote the $0$, $1$, $-1$-eigenspace by $\fu^{0}$, $\fu^{1}$, and $\fu^{-1}$, respectively. 
Note that the eigen-decomposition
 $
\fu=\fu^{-1}\oplus \fu^{0}\oplus \fu^{1}
$ gives a gradation of $\fu$, i.e. $[\fu^k, \fu^l]\subseteq \fu^{k+l}$ for any $k,l\in \mathbb{Z}$, where $\fu^k=\{0\}$ if $k\neq 0,1,-1$.

Then, the orbit $\mathcal{O}_\eta\simeq U'/U''$ becomes a {\it symmetric R-space}, namely, $\mathcal{O}_{\eta}$ is a Riemannian symmetric space with respect to the induced metric (see \cite[Example 2.8.3]{BCO} and \cite{TK}). We denote the Lie algebra of the isotropy subgroup $U''$ by $\fu''$.  Then, we obtain a reductive decomposition 
\[
\fu'=\fu''\oplus \fp',\quad \textup{where $\fu''=\fu'\cap\fu^{0}$ and $\fp'=\fu'\cap (\fu^{-1}\oplus \fu^{1})$.}
\]
Indeed, we have $\fu''=\fu'\cap\fu^{0}$ since ${\rm Ad}_{U}(U'')\eta=\eta$. Moreover, for any $X\in \fu'$,  $X$ is decomposed into $X=X^0+X^{-1}+X^1$ where $X^k\in \fu^k$. Then, we have ${\rm ad}(\eta)^2X=X^{-1}+X^1$.  Since $[\fp, \fp]\subset \fu'$, $[\fu',\fp]\subset \fp$ and $\eta\in \fp$, we see ${\rm ad}(\eta)^2\fu'\subset \fu'$, and hence, $X^{-1}+X^1\in \fu'$ and also $X_0\in \fu'$. Therefore, we obtain a decomposition $\fu'=(\fu'\cap \fu^{0})\oplus (\fu'\cap (\fu^{-1}\oplus \fu^{1}))=\fu''\oplus \fp'$. Since this decomposition is orthogonal, it follows that $\fp'$ is an ${\rm Ad}_U(U'')$-invariant subspace, and hence, $\fu'=\fu''\oplus \fp'$ is a reductive decomposition for $U'/U''$.
 As usual, we identify $\fp'$ with $T_{o}(U'/U'')$ via the map $X\mapsto \frac{d}{dt}{\rm exp}tX\cdot o|_{t=0}$.

Since $U'/U''$ is identified with $\mathcal{O}_\eta$ by the map $F: [u']\mapsto {\rm Ad}(u')\eta$ ($u'\in U'$),  the tangent vector $X\in \fp'$ corresponds to $dF_o(X)=\frac{d}{dt}{\rm Ad}({\rm exp}(tX))\eta|_{t=0}=[X,\eta]=-{\rm ad}(\eta)X$ in $T_\eta\mathcal{O}_\eta\subset \fp$. Namely, we have an isomorphism
\[
-{\rm ad}(\eta)|_{\fp'}: \fp'\to T_{\eta}\mathcal{O}_\eta.
\]

\begin{lemma}\label{prop:keyR}
Suppose $\mathcal{O}_\eta\simeq U'/U''$ is a symmetric R-space. Then we have the following:
\begin{enumerate}
\item $-{\rm ad}(\eta)|_{\fp'}: \fp'\to T_{\eta}\mathcal{O}_\eta$ is an ${\rm Ad}_{U}(U'')$-equivariant isomorphism satisfying that $[-{\rm ad}(\eta)X, -{\rm ad}(\eta)Y]=-[X,Y]$ for any $X,Y\in \fp'$. In particular, there exists a 1-1 correspondence between abelian subspaces in $\fp'$ and abelian subspaces in $T_\eta \mathcal{O}_\eta\subset \fp$. 
\item  $\alpha(\eta)=1$ for any $\alpha\in \Sigma_{>0}$.
\end{enumerate}
\end{lemma}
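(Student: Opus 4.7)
The plan is to work directly with the three-term grading $\fu = \fu^{-1}\oplus\fu^0\oplus\fu^1$ defined by the hypothesis on $\mathrm{ad}(\eta)$, which satisfies $[\fu^k,\fu^l]\subseteq\fu^{k+l}$ with $\fu^{\pm 2}=0$. Two preparatory observations will do most of the work. First, since $\eta\in\fp$, the Cartan involution satisfies $\theta\eta = -\eta$, hence $\theta\circ\mathrm{ad}(\eta) = -\mathrm{ad}(\eta)\circ\theta$, so $\theta$ swaps $\fu^1$ with $\fu^{-1}$ and fixes $\fu^0$. Combined with $\fu' = \mathrm{Fix}(\theta)$ and $\fp$ being the $(-1)$-eigenspace of $\theta$, this gives
\[
\fp' = \fu'\cap(\fu^{-1}\oplus\fu^1),\qquad T_\eta\mathcal{O}_\eta = \fp\cap(\fu^{-1}\oplus\fu^1),
\]
where the second identity uses $T_\eta\mathcal{O}_\eta = -\mathrm{ad}(\eta)\fu'\subseteq\fp$ (since $[\fu',\fp]\subseteq\fp$) together with the fact that $\mathrm{ad}(\eta)$ annihilates $\fu^0$.

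For (i), decomposing $X = X^{-1}+X^1\in\fp'$ with $X^k\in\fu^k$ gives $-\mathrm{ad}(\eta)X = X^{-1}-X^1$; this obviously lies in $\fu^{-1}\oplus\fu^1$, and a check with $\theta$ (swapping the two summands) shows it is $\theta$-antifixed, hence lies in $T_\eta\mathcal{O}_\eta$. The map is plainly a linear bijection onto $T_\eta\mathcal{O}_\eta$, with inverse $Y^{-1}+Y^1 \mapsto Y^{-1}-Y^1$. The $\mathrm{Ad}_U(U'')$-equivariance is a one-line consequence of $\mathrm{Ad}(u'')\eta = \eta$ together with the standard identity $\mathrm{Ad}(g)\circ\mathrm{ad}(Z)\circ\mathrm{Ad}(g)^{-1} = \mathrm{ad}(\mathrm{Ad}(g)Z)$. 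The bracket identity expands as
\begin{align*}
[X^{-1}-X^1,\,Y^{-1}-Y^1] = [X^{-1},Y^{-1}] - [X^{-1},Y^1] - [X^1,Y^{-1}] + [X^1,Y^1],
\end{align*}
where the first and last terms vanish by $\fu^{\pm 2} = 0$; the remaining two combine to $-[X,Y]$ by the same vanishing applied to $[X,Y] = [X^{-1},Y^1]+[X^1,Y^{-1}]$. The one-to-one correspondence of abelian subspaces is then immediate, since vanishing of $[X,Y]$ on a subspace of $\fp'$ is equivalent to vanishing of $[-\mathrm{ad}(\eta)X,-\mathrm{ad}(\eta)Y]$ on its image.

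For (ii) the argument is immediate: $\eta\in\fa$, so any $X\in\fu_\alpha$ satisfies $\mathrm{ad}(\eta)X = \alpha(\eta)X$, which identifies $\alpha(\eta)$ with an eigenvalue of $\mathrm{ad}(\eta)$. By hypothesis these eigenvalues lie in $\{-1,0,1\}$, and $\alpha\in\Sigma_{>0}$ means $\alpha(\eta)>0$, forcing $\alpha(\eta) = 1$.

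The whole proof is essentially bookkeeping with the three-term grading; there is no real obstacle. The only point worth highlighting is that $\fp'$ and $T_\eta\mathcal{O}_\eta$ are two distinct subspaces of $\fu^{-1}\oplus\fu^1$, distinguished as the $\theta$-fixed and $\theta$-antifixed parts respectively, and the isomorphism $-\mathrm{ad}(\eta)$ realises the passage from one to the other by negating the $\fu^1$-component.
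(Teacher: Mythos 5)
Your proof is correct and follows essentially the same route as the paper's: equivariance from ${\rm Ad}(u'')\eta=\eta$, the bracket identity from $[\fu^{\pm1},\fu^{\pm1}]=0$ in the grading, and part (ii) from the eigenvalue hypothesis on ${\rm ad}(\eta)$ (the paper phrases (ii) via ${\rm ad}(\eta)^2$ on $\fp_\alpha$ rather than ${\rm ad}(\eta)$ on $\fu_\alpha$, but this is the same computation). Your extra observation that $T_\eta\mathcal{O}_\eta=\fp\cap(\fu^{-1}\oplus\fu^1)$ is the $\theta$-antifixed counterpart of $\fp'$ is a nice way to make the bijectivity explicit, which the paper handles in the discussion preceding the lemma.
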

\begin{proof}
(i) For any $X\in \fp'$ and  $u''\in U''$, we see
\[
{\rm Ad}(u'')\circ (-{\rm ad}(\eta))X={\rm Ad}(u'')[X,\eta]=[{\rm Ad}(u'')X, {\rm Ad}(u'')\eta]=[{\rm Ad}(u'')X, \eta]=-{\rm ad}(\eta)\circ {\rm Ad}(u'')X.
\]
 This shows $T_\eta \mathcal{O}_\eta$ is also an $U''$-invariant subspace and $-{\rm ad}(\eta)|_{\fp'}$ is $U''$-equivariant.

Since $\fu=\fu^{-1}\oplus \fu^{0}\oplus \fu^{1}$ is a gradation of $\fu$, we have that $[\fu^{1},\fu^{1}]=[\fu^{-1},\fu^{-1}]=\{0\}$.  In particular, for any $X,Y\in \fu^{-1}\oplus\fu^{1}$, it is easy to see
$
[{\rm ad}(\eta)X, {\rm ad}(\eta)Y]=-[X,Y].
$
Because $\fp'\subset \fu^{-1}\oplus \fu^{1}$,  this shows that we have $[-{\rm ad}(\eta)X, -{\rm ad}(\eta)Y]=-[X,Y]$ for any $X,Y\in \fp'$. 
In particular, if $\fa'$ is an abelian subspace in $\fp'$, then $-{\rm ad}(\eta)(\fa')$ is also an abelian subspace in $T_\eta \mathcal{O}_\eta$, and this correspondence is invertible.  This proves (i).

(ii) For any $\alpha\in \Sigma_{>0}$, we take a non-zero element $X_\alpha\in \fp_\alpha$, and decompose $X_{\alpha}$ into $X_\alpha=X_\alpha^{-1}+X_\alpha^{0}+X_\alpha^{1}$ with respect to the eigen-decomposition $\fu=\fu^{-1}\oplus \fu^{0}\oplus \fu^{1}$. Since $X_\alpha\in \fp_\alpha$, we have ${\rm ad}(\eta)^2X_\alpha=\alpha(\eta)^2X_\alpha$ and this implies that $X_\alpha^{-1}+X_\alpha^{1}=\alpha(\eta)^2(X_\alpha^{-1}+X_\alpha^{0}+X_\alpha^{1})$. Because $\alpha(\eta)>0$,  we obtain  $X_\alpha^0=0$ and $\alpha(\eta)=1$, as required.
\end{proof}

Finally, we prove an algebraic lemma  which we use later.

\begin{lemma}\label{lem:keyR1}
Suppose $\mathcal{O}_\eta$ is a symmetric $R$-space.  Let $\fa_0$ be a two-dimensional subspace in $T_\eta \mathcal{O}_\eta$ 
which is spanned by $\{e_1,e_2\}$ of the form
\begin{align*} 
e_1=P_1+P_2\ or\ P_1-P_2,\quad e_2=P_2+P_3
\end{align*}
for some unit vector $P_i\in \fp_{\alpha_i}$ $(i=1,2,3)$, where $\alpha_{1}$, $\alpha_{2}$ and $\alpha_{3}$ are elements in $\Sigma_{>0}$ that are not proportional to each other. Moreover, we suppose
\begin{enumerate}
\item[(a)] $\alpha_i-\alpha_j\notin \Sigma$ for any $i,j=1,2,3$.
\item[(b)] $|\alpha_1|=|\alpha_2|=|\alpha_3|=D$ for some constant $D>0$.
\end{enumerate}
Then, we have
\[
|B_\varphi(v,v)|^2=\frac{D^2}{2}=(\textup{const.})\quad \textup{\it for any  unit vector $v\in \fa_0$}.
\]
\end{lemma}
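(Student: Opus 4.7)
The plan is to reduce the statement to a direct application of formula \eqref{eq:R2} followed by a short algebraic identity. The one non-trivial preliminary is recognizing that $S = \{\alpha_1, \alpha_2, \alpha_3\}$ forms a strongly orthogonal set of roots. Hypothesis (a) already gives $\alpha_i - \alpha_j \notin \Sigma$ for all $i,j$, so I only need $\alpha_i + \alpha_j \notin \Sigma$. For this I would use that ${\rm ad}(\eta)$ has eigenvalues only in $\{-1,0,1\}$ by definition of symmetric $R$-space, whence $\gamma(\eta) \in \{-1,0,1\}$ for every $\gamma \in \Sigma$ (since each $\fu_\gamma$ is contained in an eigenspace of ${\rm ad}(\eta)$). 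Combined with Lemma \ref{prop:keyR}(ii), which gives $\alpha_i(\eta) = 1$, this forces $(\alpha_i + \alpha_j)(\eta) = 2$ and hence $\alpha_i + \alpha_j \notin \Sigma$.

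Once strong orthogonality is established, I would expand an arbitrary element of $\fa_0$ as $v = s e_1 + t e_2$ and collect components by root space:
\[
v_{\alpha_1} = s\, P_1, \qquad v_{\alpha_2} = (\pm s + t)\, P_2, \qquad v_{\alpha_3} = t\, P_3,
\]
where the sign matches the one in the definition of $e_1$. Since the $P_i$ are unit vectors and distinct $\fp_{\alpha_i}$ are mutually orthogonal (as the $\alpha_i \in \Sigma_{>0}$ are distinct and not negatives of one another), the unit norm constraint $|v|^2 = 1$ reduces to $2(s^2 \pm st + t^2) = 1$.

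Finally, I would invoke \eqref{eq:R2} together with $|\alpha_i| = D$ and $\alpha_i(\eta) = 1$ to obtain
\[
|B_\varphi(v,v)|^2 = D^2 \bigl( s^4 + (\pm s + t)^4 + t^4 \bigr),
\]
and finish via the elementary identity
\[
s^4 + (\pm s + t)^4 + t^4 \;=\; 2\bigl( s^2 \pm st + t^2 \bigr)^2,
\]
which is verified by direct expansion. Substituting the constraint $s^2 \pm st + t^2 = 1/2$ yields $|B_\varphi(v,v)|^2 = D^2/2$, as required. The only step needing genuine thought is the strong-orthogonality check; everything else is purely computational, so I do not expect any real obstacle.
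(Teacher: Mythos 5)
Your proposal is correct and follows essentially the same route as the paper: establish that $\{\alpha_1,\alpha_2,\alpha_3\}$ is strongly orthogonal (the paper also deduces $\alpha_i+\alpha_j\notin\Sigma$ from $\alpha(\eta)=1$ for all $\alpha\in\Sigma_{>0}$, which is the same eigenvalue argument you give), then apply \eqref{eq:R2} and the identity $s^4+(\pm s+t)^4+t^4=2(s^2\pm st+t^2)^2$ together with the unit-norm constraint. The only cosmetic difference is that you carry both sign cases explicitly while the paper reduces to $e_1=P_1+P_2$ at the outset.
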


\begin{proof}
It suffices to prove in the case when $e_1=P_1+P_2$ and $e_2=P_2+P_3$. We first note that, by Lemma \ref{prop:keyR}-(ii), 
$\alpha+\beta\notin \Sigma$ for any $\alpha,\beta\in \Sigma_{>0}$. Thus, we may assume ${\rm (a)}'$ $\alpha_i\pm \alpha_j\notin \Sigma$ for any $i,j=1,2,3$, i.e. $\{\alpha_i\}_{i=1}^3$ consists of strongly orthogonal roots.  In particular, we are able to apply Lemma \ref{lem:keyR} to the subset $S=\{\alpha_i\}_{i=1}^3$. By  condition (b) and Lemma \ref{prop:keyR}-(ii), the formula \eqref{eq:R2} becomes
\[
|B_\varphi(v,v)|^2=\sum_{i=1}^{3} D^{2} |v_{\alpha_{i}}|^4
\]
for any $v\in \fp_{S}$. Since $v\in \fa_{0}\subset \fp_{S}$, we may write $v=v_{1}e_{1}+v_{2}e_{2}=v_{1}P_{1}+(v_{1}+v_{2})P_{2}+v_{2}P_{3}$ for some $v_{1}, v_{2}\in \R$. Because $\alpha_{i}\neq \alpha_{j}$, we have $v_{\alpha_{1}}=v_{1}P_{1}$, $v_{\alpha_{2}}=(v_{1}+v_{2})P_{2}$ and $v_{\alpha_{3}}=v_{2}P_{3}$.
 In particular, we see
\[
|v|^{2}=1\quad \Longleftrightarrow\quad  v_{1}^{2}+(v_{1}+v_{2})^{2}+v_{2}^{2}=1\quad \Longleftrightarrow\quad v_{1}^{2}+v_{2}^{2}+v_{1}v_{2}=\frac{1}{2}.
\]
Thus, for any unit vector $v\in \fa_{0}$, we see
\begin{align*}
|B_\varphi(v,v)|^2=\sum_{i=1}^{3} D^{2} |v_{\alpha_{i}}|^4=D^{2}\cdot \{v_{1}^{4}+(v_{1}+v_{2})^{4}+v_{2}^{4}\}=D^{2}\cdot 2(v_{1}^{2}+v_{2}^{2}+v_{1}v_{2})^{2}=\frac{D^{2}}{2}
\end{align*}
as required.
\end{proof}

\subsection{The immersion of $G_{2}/SO(4)$} 
Let us consider a specific situation. Suppose $U/U'=SL(7, \R)/SO(7)$. Note that ${\rm dim}\ U/U'=27$ and ${\rm rank}\ U/U'=6$. The associated Cartan involution is given by $\theta: \fu\to \fu$, $X\mapsto -{}^{t}X$, and the eigen-decomposition with respect to $\theta$ yields the Cartan decomposition $\fu=\fu'\oplus \fp$, where
\begin{align*}
\fu&=\mathfrak{sl}(7, \R)=
\{X\in \mathfrak{gl}(7,\R)\mid {\rm tr}X=0\}, \\
\fu'&=\mathfrak{o}(7)=
\{X\in \mathfrak{gl}(7,\R)\mid  {}^{t}X=-X\},\\
\fp&=
\{X\in \mathfrak{gl}(7,\R)\mid  {\rm tr}X=0,\ {}^{t}X=X\}.
\end{align*}
%Note that the subspace $\fp$ is identified with the tangent space $T_{o}(U/U')$ at the origin $o=[id]$ by the differential map $d\pi_{id}$ of the natural projection $\pi: U\to U/U'$.

The Killing form ${\bf B}$ on $\fu$ is given by ${\bf B}(X,Y)=2\cdot 7\cdot {\rm tr}_{\R}(XY)$ for $X, Y\in \fu$. This induces the canonical inner product $\langle\cdot,\cdot \rangle_{\fp}$ on $\fp$ defined by
\[
\langle X, Y\rangle_{\fp}:=-{\bf B}(X,\theta Y)=C\cdot {\rm tr}_{\R}(XY)
\]
for $X, Y\in \fp$, where we put $C:=2\cdot 7=14$.

We consider an orbit $\mathcal{O}_\eta={\rm Ad}_U(U')\cdot \eta$ through a specific element 
\begin{align}\label{def:eta}
{\eta}:=\left[
\begin{array}{c|ccc}
\frac{4}{7}I_{3} & O \\ \hline
O& -\frac{3}{7}I_{4}
\end{array}
\right]\in \fp,
\end{align}
where ${I}_{n}$ denotes the $n\times n$ identity matrix. Note that $|\eta|_{\fp}^2=24$. It is easy to check that the eigenvalues of ${\rm ad}(\eta): \fu\to \fu$ are $0, -1, 1$, and hence, the orbit $\mathcal{O}_\eta$ becomes a {symmetric R-space}.  Moreover, we see that the isotropy subgroup at $\eta$ is given by $S(O(3)\times O(4))$, namely, $\mathcal{O}_\eta$ is isometric to the symmetric space $SO(7)/S(O(3)\times O(4))$, which is diffeomorphic to the non-oriented real Grassmannian manifold $G_3(\R^7)$. We identify the orbit $\mathcal{O}_\eta$ with $G_3(\R^7)$. Then, we obtain an embedding
\[
\varphi: G_{3}(\R^{7})\to \fp.
\]

Next, we shall construct an immersion of $M=G_{2}/SO(4)$.  By definition, $G_{2}$ is the automorphism group of the octonion algebra ${\bf O}$.  It is known that $G_{2}$ is actually acts on ${\rm Im}({\bf O})\simeq \R^{7}$ and $G_{2}$ is a closed subgroup of $SO(7)$. In particular, $G_{2}$ is naturally acts on the oriented $3$-plane real Grassmannian manifold $\widetilde{G}_{3}(\R^{7})$, and it turns out that the isotropy subgroup at the origin $o\in \widetilde{G}_{3}(\R^{7})$ is isomorphic to $SO(4)$ (see \cite[Theorem 1.8 in Ch. IV]{HL}. Note that the orbit $G_2\cdot o$ is diffeomorphic to the associative Grassmannian). Therefore, we have a diffeomorphism $ G_2\cdot o\simeq G_2/SO(4)$ and obtain an equivariant embedding $\iota': G_{2}/SO(4)\to \widetilde{G}_{3}(\R^{7})$. Moreover, by regarding $\widetilde{G}_{3}(\R^{7})$ as a symmetric space, one easily verifies that $\iota'$ is a totally geodesic embedding (see also the next section for details). In particular, $G_2\cdot o$ is also a symmetric space with respect to the induced metric, and hence, $\iota'$ is an isometric embedding of the symmetric space $G_{2}/SO(4)$ into $\widetilde{G}_{3}(\R^{7})$. By composing the 2-fold covering map $\pi:\widetilde{G}_{3}(\R^{7})\to G_{3}(\R^{7})$, we obtain a totally geodesic immersion
\[
\iota=\pi\circ \iota': G_{2}/SO(4) \to G_{3}(\R^{7}).
\]
Now, we set
\[
\Phi=\varphi\circ\iota: G_{2}/SO(4)\to \fp\simeq \R^{27}.
\]
Since $\varphi$ is a map into the hypersphere $S^{26}(|\eta|)\subset \fp$, we also obtain an immersion $\widetilde{\Phi}: G_{2}/SO(4)\to S^{26}(|\eta|)$.
 Then, $\Phi$ and $\widetilde{\Phi}$ have the following nice geometric properties:

\begin{proposition}\label{prop:keyg}
We have the following:
\begin{enumerate}
\item $\widetilde{\Phi}$ is a minimal immersion into the hypersphere $S^{26}(|\eta|)$. 
%\textcolor{red}{CHECK: Moreover, $\widetilde{\Phi}$ is homothetic to the first standard immersion of $G_2/SO(4)$.}
\item The second fundamental form $B_\Phi$ of $\Phi$ is isotropic, i.e. $|B_\Phi(v,v)|$ is constant for any unit tangent vector $v$ of $G_2/SO(4)$.
\end{enumerate}
\end{proposition}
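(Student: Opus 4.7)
For part (i), my strategy is to show that $\Phi$ is an eigenmap of the Laplacian on $G_{2}/SO(4)$, from which minimality of $\widetilde{\Phi}$ in the hypersphere follows by the standard mechanism: $\Delta\Phi = -H_{\Phi}$, combined with $B_{\Phi}(X,Y) = B_{\widetilde{\Phi}}(X,Y) - \tfrac{1}{|\eta|}g(X,Y)\nu$, forces $H_{\widetilde{\Phi}}=0$ whenever $H_{\Phi}$ is radial. The map $\Phi$ is $G_{2}$-equivariant (since $G_{2}\subset SO(7)$ and both $\iota, \varphi$ are equivariant), so the coordinate functions of $\Phi$ span a $G_{2}$-submodule of $L^{2}(G_{2}/SO(4))$ isomorphic to a quotient of the restricted representation $\fp = S^{2}_{0}(\R^{7})|_{G_{2}}$. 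The key input is that $S^{2}_{0}(\R^{7})$ is irreducible as a $G_{2}$-module: from the $G_{2}$-decomposition $\R^{7}\otimes\R^{7} \cong \R \oplus \R^{7} \oplus \mathfrak{g}_{2} \oplus V_{27}$ (dimension count $1+7+14+27=49$), the trace-free symmetric part is exactly the $27$-dimensional fundamental $G_{2}$-irrep. By Schur, the $G_{2}$-equivariant Laplacian acts on this irreducible submodule by a single scalar $\lambda$, giving $\Delta\Phi = \lambda\Phi$ and hence minimality of $\widetilde{\Phi}$.

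For part (ii), I combine the totally-geodesic composition formula $B_{\Phi}(X,Y) = B_{\varphi}(d\iota X, d\iota Y)$ with Lemma \ref{lem:keyR1} and the polar action of $SO(4)$ on $T_{o}(G_{2}/SO(4))$. The steps are: (a) pick a maximal abelian subspace $\fa' \subset \fp'$ of the Cartan decomposition $\mathfrak{g}_{2} = \mathfrak{so}(4) \oplus \fp'$; since $\mathrm{rank}(G_{2}/SO(4))=2$, $\dim\fa'=2$. (b) By the $\mathrm{Ad}(SO(4))$-equivariant isomorphism $-\mathrm{ad}(\eta):\fp' \to T_{\eta}(G_{2}\cdot\eta)$ of Lemma \ref{prop:keyR}(i), the image $\fa_{0}:=-\mathrm{ad}(\eta)(\fa')$ is a $2$-dimensional abelian subspace of $T_{\eta}\mathcal{O}_{\eta} = \bigoplus_{\alpha\in\Sigma_{>0}}\fp_{\alpha}$. (c) Compute $\fa_{0}$ explicitly in the root basis and verify it is spanned by vectors of the form $\{P_{1}\pm P_{2},\, P_{2}+P_{3}\}$ for unit $P_{i}\in\fp_{\alpha_{i}}$ where $\alpha_{1},\alpha_{2},\alpha_{3}\in\Sigma_{>0}$ are mutually non-proportional, strongly orthogonal, and of equal length; Lemma \ref{lem:keyR1} then yields $|B_{\varphi}(v,v)|^{2}\equiv D^{2}/2$ for every unit $v\in\fa_{0}$. (d) Since the isotropy representation of a symmetric space is polar with section equal to any maximal abelian subspace, every $SO(4)$-orbit in $\fp'$ meets $\fa'$, so every $SO(4)$-orbit in $T_{\eta}(G_{2}\cdot\eta)$ meets $\fa_{0}$; by $SO(4)$-equivariance of $v\mapsto |B_{\Phi}(v,v)|$ the constant $D^{2}/2$ extends to the full unit sphere, establishing isotropy.

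The technical core, and the \textbf{main obstacle}, is step (c): producing an explicit embedding $\mathfrak{g}_{2}\hookrightarrow\mathfrak{so}(7)$ via the octonion multiplication, fixing a concrete maximal abelian $\fa'\subset\fp'$, computing $\fa_{0}=-\mathrm{ad}(\eta)(\fa')$ inside the root decomposition of $\mathfrak{sl}(7,\R)$ with respect to the diagonal Cartan containing $\eta$, and then confirming the precise combinatorial pattern $\{P_{1}\pm P_{2},\, P_{2}+P_{3}\}$ together with the required root relations. Not every $2$-abelian subspace of $T_{\eta}\mathcal{O}_{\eta}$ has this form, so the specific octonionic geometry of the $G_{2}$-orbit is essential here; this is exactly the ``detailed computation for an exceptional case'' flagged in the introduction.
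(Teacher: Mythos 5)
Your part (i) takes a genuinely different route from the paper. The paper computes the mean curvature of $\Phi$ at $\eta$ directly from the shape-operator formula for the symmetric R-space $\varphi$ (Lemma \ref{lem:keyR}), summing $\sum g^{ij}B_\varphi(\widetilde V_i,\widetilde V_j)$ over the four abelian blocks $\widetilde{\fa}_1,\dots,\widetilde{\fa}_4$ and obtaining $H_\eta=-\tfrac13\eta$; you instead deduce $\Delta\Phi=\lambda\Phi$ from equivariance and irreducibility of $S^2_0(\R^7)|_{G_2}\cong V_{27}$ and invoke Takahashi. Your route is cleaner and avoids computation, but as stated it has a small hole: Schur's lemma tells you that a $G_2$-equivariant endomorphism \emph{of} the irreducible module $W$ spanned by the coordinate functions acts by a scalar, whereas $\Delta$ is an operator on $L^2(M)$ and you must first know $\Delta(W)\subseteq W$. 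This is not automatic from irreducibility alone; it follows because $L^2(G/K)$ of a compact symmetric space is multiplicity-free (Cartan--Helgason), so the $V_{27}$-isotypic component is a single copy and hence $\Delta$-invariant. With that one additional input your argument for (i) is complete, and the eigenvalue $\lambda=n/|\eta|^2=1/3$, which the paper reads off from its explicit computation, is recovered from the known radius $r=|\eta|$.

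For part (ii) your plan coincides with the paper's proof (polar section of the isotropy representation, the equivariant isomorphism of Lemma \ref{prop:keyR} (i), reduction to a single maximal abelian subspace, then Lemma \ref{lem:keyR1}), but the decisive step --- your step (c) --- is only announced, not executed, and you yourself flag it as the main obstacle. That step is where the actual content of the proposition lies: one must exhibit a concrete maximal abelian subspace of $T_\eta(G_2\cdot\eta)$ and verify that it has the special shape $\{P_1\pm P_2,\,P_2+P_3\}$ with the three roots mutually non-proportional, strongly orthogonal and of equal length; not every rank-two symmetric space embedded this way would produce such a configuration. The paper carries this out by realizing $\fg_2\subset\fo(7)$ via Yokota's linear relations, taking $\widetilde{\fa}_1={\rm span}\{\widetilde V_7,\widetilde V_8\}$ with $\widetilde V_7=P_{15}-P_{37}$ and $\widetilde V_8=P_{26}+P_{37}$, and checking that $S_1=\{\epsilon_1-\epsilon_5,\epsilon_2-\epsilon_6,\epsilon_3-\epsilon_7\}$ consists of strongly orthogonal roots of common length $\sqrt{2/C}$, which gives $|B_\Phi(v,v)|^2=1/C$. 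Until you produce such a subspace and verify these properties, part (ii) is a correct strategy but not yet a proof.
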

\begin{remark}
{\rm
The embedding $\varphi: G_{3}(\R^{7})\to \fp$ also provides a minimal immersion $\widetilde{\varphi}: G_{3}(\R^{7})\to S^{26}(|\eta|)$ by the result of Takeuchi-Kobayashi \cite[Theorem 4.2]{TK}. However, (i) is not a direct consequence of this fact.  We also remark that the second fundamental form of $\varphi: G_{3}(\R^{7})\to \fp$ is not isotropic.
}
\end{remark}
As a consequence of Proposition \ref{prop:keyg}, we see that $|B_{\Phi}(v,v)|$ coincides with $\beta_{\Phi}$ for any unit tangent vector $v$ of $G_2/SO(4)$, and hence, we can easily determine the value. The explicit value of $\beta_{\Phi}$ will be used in the proof of our main result (Theorem \ref{thm:G}). 

\subsection{Proof of Proposition \ref{prop:keyg}}
In this subsection, we give a proof of Proposition \ref{prop:keyg}. We use geometry of symmetric R-space described in Section \ref{subs:symR}. Suppose $U/U'=SL(7,\R)/SO(7)$, and denote the Cartan decomposition by $\fu=\fu'\oplus \fp$. A maximal abelian subspace $\fa$ of $\fp$ is given by
\[
\fa={\rm span}_{\R}\{H_{i}:=E_{i,i}-E_{i+1,i+1}\mid 1\leq i\leq 6\},
\]
where $E_{i,j}$ is the $(i,j)$-matrix unit. Note that $\{H_i\}_{i=1}^6$ is {\it not} an orthogonal basis.

The subspace $\fa$ contains the element $\eta$ given by \eqref{def:eta}. Indeed, we see
\begin{align}\label{eq:eta}
\eta=\frac{1}{7}(4H_{1}+8H_{2}+12H_{3}+9H_{4}+6H_{5}+3H_{6}).
\end{align}
Then, the associated restricted root system and positive roots are given by 
\[
\Sigma=\{\epsilon_{i}-\epsilon_{j}\mid i\neq j\},\quad \Sigma_{+}=\{\epsilon_{i}-\epsilon_{j}\mid 1\leq i <j\leq 7\},
\]
where $\epsilon_{i}\in \fa^{*}$ ($i=1,\ldots, 7$) is defined by 
\[
\epsilon_{i}(A):=(\textup{$(i,i)$-component of $A$})\quad {\rm for}\quad A\in \fa.
\]
Note that $\Sigma$ is a root system of type ${A}_{6}$ and  the set of simple roots of $\Sigma$ is given by $\{\epsilon_{i}-\epsilon_{i+1}\}_{i=1}^{6}$. The root vector of the simple root with respect to the induced metric on $\fa$ is given by
\begin{align}\label{eq:simpr}
(\epsilon_{i}-\epsilon_{i+1})^{\sharp}=\frac{1}{C}H_{i}
\end{align}
for each $i=1,\ldots, 6$, where the {\it root vector} $\alpha^{\sharp}$ of $\alpha$ means the element in $\fa$ defined by the relation $\alpha=\langle \alpha^{\sharp},\cdot \rangle|_{\fa}$.

Moreover, each root space is expressed by
\[
\fp_{\epsilon_{i}-\epsilon_{j}}={\rm span}_{\R}\{P_{ij}:=E_{i,j}+E_{j,i}\}.
\]
Note that $|P_{ij}|^{2}=2C$ for any $1\leq i<j\leq 7$.

We put $I:=\{(i,j)\mid 1\leq i<j\leq 7\}$ and 
\[I_{0}:=\{(i,j)\in I\mid 1\leq i<j\leq 3\ {\rm or}\ 4\leq i<j\leq 7\}\]
so that 
\[
I\setminus I_{0}=\{(i,j)\mid 1\leq i\leq 3\ {\rm and}\ 4\leq j \leq 7\}.
\]
 Then, for the element $\eta$ defined by \eqref{def:eta}, we have
\begin{align*}
\Sigma_{>0}=\{\epsilon_{i}-\epsilon_{j}\mid (i,j)\in I\setminus I_{0}\},\quad \Sigma_{0}=\{\epsilon_{i}-\epsilon_{j}\mid  (i,j)\in I_0\}.
\end{align*}
In particular, by \eqref{eq:tannor}, the tangent space and the normal space of $\varphi: \mathcal{O}_\eta\to \fp$ are expressed by
\begin{align*}
T_\eta  \mathcal{O}_\eta&=
 \Big{\{}
\left[
\begin{array}{c|ccc}
O &B \\ \hline
 {}^{t}B &  O
\end{array}
\right]\in \mathfrak{gl}(7,\R)\mid
B\in M(3,4,\R)
\Big{\}},\\
T_\eta^{\perp}  \mathcal{O}_\eta&=
 \Big{\{}
\left[
\begin{array}{c|ccc}
A & O \\ \hline
O&  C
\end{array}
\right]\in \mathfrak{gl}(7,\R)\mid
A\in {\rm Sym}(3,\R),\ C\in {\rm Sym}(4,\R),\ {\rm tr}A+{\rm tr}C=0
\Big{\}},
\end{align*} 
where ${\rm Sym}(n,\R)$ is the set of $n\times n$ symmetric matrices.

Recall that the symmetric R-space $\mathcal{O}_\eta$ is isometric to the symmetric space $U'/U'':=SO(7)/S(O(3)\times O(4))\simeq G_{3}(\R^{7})$.
The associated reductive decomposition $\fu'=\fu''\oplus \fp'$ of $U'/U''$ (see Section \ref{subs:symR}) is precisely given by 
\begin{align*}
\fu'&=\fo(7)=\{X\in \mathfrak{gl}(7,\R)\mid {}^{t}X=-X\},\\
\fu''&=\fu'\cap \fu^0=\Big{\{}
\left[
\begin{array}{c|ccc}
A & O\\ \hline
O &  C
\end{array}
\right]\in \mathfrak{gl}(7,\R)\mid
A\in \fo(3),\ C\in \fo(4)
\Big{\}},\\
\fp'&=\fu'\cap (\fu^{-1}\oplus \fu^{1})
=\Big{\{}
\left[
\begin{array}{c|ccc}
O &B \\ \hline
 -{}^{t}B &  O
\end{array}
\right]\in \mathfrak{gl}(7,\R)\mid
B\in M(3,4,\R)
\Big{\}}.
\end{align*}
Note that this coincides with the usual Cartan decomposition associated to $U'/U''$.
By Lemma \ref{prop:keyR} (i), we have an ${\rm Ad}_U(U'')$-equivariant isomorphism $-{\rm ad}(\eta)|_{\fp'}: \fp'\to T_\eta\mathcal{O}_\eta$. More precisely, this isomorphism is given by
\begin{align}\label{eq:corre}
\fp'\ni
\left[
\begin{array}{c|ccc}
O &B \\ \hline
 -{}^{t}B &  O
\end{array}
\right]
\mapsto 
\left[
\begin{array}{c|ccc}
O &-B \\ \hline
 -{}^{t}B &  O
\end{array}
\right]
\in T_{\eta}\mathcal{O}_\eta.
\end{align}

Next, we consider the immersion $\Phi=\varphi\circ \iota: G_{2}/SO(4)\to \fp$.
Recall that $G_{2}$ is a closed subgroup of $SO(7)$. In fact, the Lie algebra $\fg_2$ of $G_2$ is expressed as a Lie subalgebra of $\fo(7)$ as follows: 
We set 
\[
G_{ij}:=E_{i,j}-E_{j,i}\quad 1\leq i<j\leq 7,
\]
which forms a basis of $\fo(7)$. Then we have the following:
\begin{lemma}[Theorem 1.4.3 in \cite{Yokota}]
The Lie algebra $\fg_{2}$ is expressed by 
\begin{align*}
\fg_{2}=\left\{
\sum_{1\leq i<j\leq 7} \lambda_{ij}G_{ij}\;\left|\;
  \begin{gathered}
\lambda_{23}+\lambda_{45}+\lambda_{67}=0,\quad -\lambda_{13}-\lambda_{46}+\lambda_{57}=0,\\
\lambda_{12}+\lambda_{47}+\lambda_{56}=0,\quad -\lambda_{15}+\lambda_{26}-\lambda_{37}=0,\\
\lambda_{14}-\lambda_{27}-\lambda_{36}=0,\quad -\lambda_{17}-\lambda_{24}+\lambda_{35}=0,\\
\lambda_{16}+\lambda_{25}+\lambda_{34}=0,\quad \lambda_{ij}\in \R.
  \end{gathered}
\right.
\right\}\subset \fo(7).
\end{align*}
\end{lemma}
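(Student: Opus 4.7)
The plan is to derive the seven linear constraints directly from the definition of $G_{2}$ as the automorphism group of the octonion algebra $\mathbb{O}$. Its Lie algebra is the derivation algebra $\mathfrak{g}_{2} = \mathrm{Der}(\mathbb{O})$, and any derivation $D$ annihilates $1$ (from $D(1\cdot 1) = 2D(1)$) and preserves the Cayley inner product $\langle x,y\rangle = \frac{1}{2}(x\bar y + y\bar x)$, hence restricts to a skew-symmetric endomorphism of $\mathrm{Im}(\mathbb{O}) = \mathrm{span}_{\mathbb{R}}\{e_{1},\ldots,e_{7}\} \cong \mathbb{R}^{7}$. This gives the inclusion $\mathfrak{g}_{2} \hookrightarrow \mathfrak{o}(7)$, and writing a generic element as $D = \sum_{i<j}\lambda_{ij} G_{ij}$ reduces the problem to translating the Leibniz identity into linear conditions on the $\lambda_{ij}$.

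Next I would fix Yokota's octonion multiplication table, in which every product of two distinct imaginary units is determined by the seven \emph{associative triples}
\[
\{1,2,3\},\ \{1,4,5\},\ \{1,7,6\},\ \{2,4,6\},\ \{2,5,7\},\ \{3,4,7\},\ \{3,6,5\},
\]
each generating a quaternion subalgebra of $\mathbb{O}$. For each triple $(i,j,k)$ with $e_{i}e_{j} = e_{k}$ I would impose $D(e_{i}e_{j}) = (De_{i})e_{j} + e_{i}(De_{j})$, expand both sides using $G_{ab} e_{l} = \delta_{bl} e_{a} - \delta_{al} e_{b}$ and the multiplication table, and compare the coefficient of each basis vector $e_{m}$. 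For $m \in \{i,j,k\}$ the identity is automatic, because every skew endomorphism of a quaternion subalgebra is a derivation of it. For $m \notin \{i,j,k\}$ the coefficient comparison produces a three-term linear relation; the pairs $\{a,b\}$ appearing are precisely the three pairs of elements of $\{1,\ldots,7\}\setminus\{m\}$ that are joined to $m$ through some associative triple. For instance, the index $m = 1$ is joined to the pairs $\{2,3\}$, $\{4,5\}$, $\{6,7\}$ by the three triples containing $1$, and this yields the first equation $\lambda_{23} + \lambda_{45} + \lambda_{67} = 0$; similarly $m = 3$ produces $\lambda_{12} + \lambda_{47} + \lambda_{56} = 0$, and so on for the remaining five values of $m$.

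Finally, I would verify that the seven linear forms so obtained are linearly independent (each equation contains a pair $\{a,b\}$ absent from the others, as the pair-incidence pattern comes from the Fano plane), so their joint kernel in $\mathfrak{o}(7)$ has dimension $\binom{7}{2} - 7 = 14 = \dim G_{2}$, forcing equality with $\mathfrak{g}_{2}$. The principal obstacle is purely combinatorial and notational: fixing a consistent sign convention for the twenty-one products $e_{i}e_{j}$ in accord with Yokota's table and propagating these signs carefully through the seven expansions so that the resulting equations match the displayed signs. A slicker, coordinate-free packaging is to introduce the $G_{2}$-invariant associative $3$-form $\phi(x,y,z) = \langle xy, z\rangle$ on $\mathrm{Im}(\mathbb{O})$ and use the characterization $\mathfrak{g}_{2} = \{A \in \mathfrak{o}(7) : A\cdot \phi = 0\}$; the components of $A\cdot \phi$ in a basis of the $7$-dimensional $G_{2}$-submodule of $\Lambda^{3}(\mathbb{R}^{7})^{*}$ transverse to the $\mathrm{GL}(7)$-orbit of $\phi$ reproduce exactly the seven equations stated.
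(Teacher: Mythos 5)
The paper does not actually prove this lemma --- it is quoted verbatim as Theorem 1.4.3 of Yokota's lecture notes \cite{Yokota}, so there is no internal proof to compare against. Your reconstruction is the standard (and essentially Yokota's) argument: identify $\mathfrak{g}_2$ with $\mathrm{Der}(\mathbb{O})$, observe that a derivation kills $1$ and is skew on $\mathrm{Im}(\mathbb{O})\cong\R^7$, impose the Leibniz rule against the Fano-plane multiplication table, and close the argument with the dimension count $\binom{7}{2}-7=14=\dim\mathfrak{g}_2$ (linear independence of the seven forms is immediate since the seven equations use the $21$ pairs exactly once each --- each pair lies on a unique line, hence appears in exactly one equation). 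This is correct in outline and is a genuine proof where the paper offers only a citation. One bookkeeping claim is off, though harmlessly: for a triple $\{i,j,k\}$ with $e_ie_j=e_k$ and $m\notin\{i,j,k\}$, the coefficient of $e_m$ in $D(e_ie_j)=(De_i)e_j+e_i(De_j)$ involves the pairs $\{k,m\}$, $\{a,i\}$, $\{b,j\}$ (with $a$ the third point of the line through $j,m$ and $b$ that of the line through $i,m$); these three pairs all complete a line through the third point of the line $\{k,m\}$, not through $m$ itself. For example, the triple $\{1,2,3\}$ with $m=4$ produces the relation on $\lambda_{34},\lambda_{16},\lambda_{25}$, which is the equation you index by $7$, not by $4$. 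So the $28$ coefficient comparisons do reduce (each occurring four times up to sign) to the seven displayed equations with exactly the pair-incidence pattern you describe, but the attribution of ``equation $m$'' to ``coefficient of $e_m$ in a single expansion'' should be dropped or corrected. Beyond that, the only remaining work is the sign verification against Yokota's specific multiplication table, which you correctly flag as the main notational burden.
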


In particular, we can take a basis $\{V_{1},\ldots, V_{14}\}$ of $\fg_{2}$ as follows (cf. \cite{Sasaki}):
\begin{align*}
V_1&=G_{23}-G_{67},\quad V_2=G_{45}-G_{67},\quad V_3=G_{13}+G_{57},\quad V_4=G_{46}+G_{57},\quad V_5=G_{12}-G_{56},\\
V_6&=G_{47}-G_{56},\quad V_7=G_{15}-G_{37},\quad V_8=G_{26}+G_{37},\quad V_9=G_{14}+G_{36}, \quad V_{10}=G_{27}-G_{36},\\
V_{11}&=G_{17}+G_{35},\quad V_{12}=G_{24}+G_{35},\quad V_{13}=G_{16}-G_{34}, \quad V_{14}=G_{25}-G_{34}.
\end{align*}

We set $M=G/K=G_2/SO(4)$. Then, $M=G/K$ is immersed into the symmetric space $U'/U'':=SO(7)/S(O(3)\times O(4))\simeq G_{3}(\R^{7})$ by the composition map $\iota=\pi\circ \iota'$ of the natural embedding $\iota': G_2/SO(4)\to SO(7)/SO(3)\times SO(4)\simeq \widetilde{G}_3(\R^7)$ and the covering map $\pi: \widetilde{G}_3(\R^7)\to G_3(\R^7)$. Since our computation is local, we shall use an identification between a neighborhood around  the origin in $\widetilde{G}_3(\R^7)$ and a neighborhood around the origin in ${G}_3(\R^7)$ in the following computation.

Denote the Lie algebra of $G$ and $K$ by $\fg$ and $\fk$, respectively. Since $K$ is contained in $U''$, we have $\fk=\fg\cap \fu''$. More precisely, by using the above basis of $\fg=\fg_2$, we see
\[
\fk=\fg\cap \fu''={\rm span}_\R\{V_1,\ldots, V_6\}.
\]
Hence, by letting
\[
\fm:= \fg\cap \fp'={\rm span}_{\R}\{V_{7},\ldots, V_{14}\},
\]
we obtain a reductive decomposition $\fg=\fk\oplus \fm$ of $M=G/K$ and $\fm$ is identified with $T_oM$. Moreover, we see that the restriction $\theta|_{\fg}$ of the Cartan involution $\theta: \fu'\to\fu'$ associated with $U'/U''$ gives the Cartan involution on $\fg$, and the reductive decomposition $\fg=\fk\oplus \fm$ coincides with the Cartan decomposition of $\fg$.
It is easy to see that $\fm$ is a Lie triple system in $\fp'$, i.e. $\fm$ satisfies that $[\fm,[\fm,\fm]]\subset \fm$. In particular, $G_{2}\cdot o\simeq G_{2}/SO(4)$ is a totally geodesic submanifold in $\widetilde{G}_{3}(\R^{7})$, and $\iota: G/K\to U'/U''$ is a totally geodesic immersion. Furthermore, $M=G/K$ is also a symmetric space with respect to the induced metric.

By using the correspondence \eqref{eq:corre}, we obtain a linear isomorphism
\[
\fm\simeq \widetilde{\fm}:={\rm span}_{\R}\{\widetilde{V}_{7},\ldots \widetilde{V}_{14}\}\subset T_\eta\mathcal{O}_\eta
\]
where $\widetilde{V}_{k}$ is defined by replacing each $G_{ij}=E_{i,j}-E_{j,i}$ to $P_{ij}=E_{i,j}+E_{j,i}$ in the definition of $V_{k}$. For example, $\widetilde{V}_{7}:=P_{15}-P_{37}$, $\widetilde{V}_{8}:=P_{26}+P_{37}$, etc.  Note that the isomorphism $\fm\simeq \widetilde{\fm}$ is ${\rm Ad}_U(K)$-equivariant since $K\subset U''$ and the isomorphism $\fp'\simeq T_\eta\mathcal{O}_\eta$ is ${\rm Ad}_U(U'')$-equivariant  by Lemma \ref{prop:keyR} (i).

We divide $\Sigma_{>0}$ into the following four disjoint subsets:
\[
\Sigma_{>0}=S_1\sqcup S_2\sqcup S_3\sqcup S_4,
\]
where we set
\begin{align*}
&S_1=\{\epsilon_1-\epsilon_5, \epsilon_2-\epsilon_6, \epsilon_3-\epsilon_7\},\quad 
S_2=\{\epsilon_1-\epsilon_4, \epsilon_2-\epsilon_7, \epsilon_3-\epsilon_6\},\\
&S_3=\{\epsilon_1-\epsilon_7, \epsilon_2-\epsilon_4, \epsilon_3-\epsilon_5\},\quad 
S_4=\{\epsilon_1-\epsilon_6, \epsilon_2-\epsilon_5, \epsilon_3-\epsilon_4\}.
\end{align*}
We also set $\fp_{S_i}=\bigoplus_{\alpha\in S_i}\fp_\alpha$. Notice that each $S_i$ consists of strongly orthogonal roots.
Then, we have an orthogonal decomposition
\begin{align}\label{eq:decG}
\widetilde{\fm}=\bigoplus_{i=1}^4 \widetilde{\fa}_i,
\end{align}
where
\begin{align*}
&\widetilde{\fa}_1:={\rm span}_\R\{\widetilde{V}_7,\widetilde{V}_8\}\subset \fp_{S_1},\quad 
\widetilde{\fa}_2:={\rm span}_\R\{\widetilde{V}_9,\widetilde{V}_{10}\}\subset\fp_{S_2},\\
&\widetilde{\fa}_3:={\rm span}_\R\{\widetilde{V}_{11},\widetilde{V}_{12}\}\subset \fp_{S_3},\quad 
\widetilde{\fa}_4:={\rm span}_\R\{\widetilde{V}_{13},\widetilde{V}_{14}\}\subset \fp_{S_4}.
\end{align*}
Note that each $\widetilde{\fa}_i$ is an  abelian subspace in $\fp$.  Thus, by Lemma \ref{prop:keyR} (i), $\widetilde{\fa}_i$ corresponds to an abelian subspace $\fa_i$ in $\fp'$  contained in $\fm\simeq T_oM$. In particular, each $\widetilde{\fa}_i$ is regarded as a maximal abelian subspace in $\fm$ since the rank of $M=G_2/SO(4)$ is equal to 2.

Now, we are ready to give a proof of Proposition  \ref{prop:keyg}.

\begin{proof}[Proof of Proposition \ref{prop:keyg}]
First, we show that $\widetilde{\Phi}: G_2/SO(4)\to S^{26}(|\eta|)$ is a minimal immersion. Note that, an immersion $\widetilde{F}: M^n\to S^{n+k-1}(r)$ is minimal if and only if the mean curvature of $F:M\to \R^{n+k}$ satisfies
$
H_w=-(n/r^2)\cdot {\bm w}
$
for any $w\in M$, where ${\bm w}$ is the position vector of $F(w)$. 
We show the mean curvature vector of $\Phi: G_2/SO(4)\to \fp$ satisfies this equation. 

Since $\Phi$ is an equivariant isometric immersion, it suffices to consider at the point $\eta=\Phi(o)$. We identify $T_oM$ with $\widetilde{\fm}\subset T_\eta\mathcal{O}_\eta$ as described above. 
We put $g_{ij}:=\langle \widetilde{V}_i,\widetilde{V}_j\rangle$, and denote the inverse matrix of $(g_{ij})_{i,j=7,\ldots, 14}$ by $(g^{ij})_{i,j=7,\ldots, 14}$. Since $\iota: G_{2}/SO(4)\to SO(7)/S(O(3)\times O(4))$ is a totally geodesic immersion, the mean curvature vector $H_{\eta}$ of $\Phi=\varphi\circ \iota: G_{2}/SO(4)\to \fp$ at $\eta$ is expressed by
\[
H_{\eta}=\sum_{i,j=7}^{14}g^{ij}B_{\varphi}(\widetilde{V}_{i},\widetilde{V}_{j}),
\]
where $B_{\varphi}$ is the second fundamental form of the symmetric R-space $\varphi: SO(7)/S(O(3)\times O(4))\to \fp$.
Since \eqref{eq:decG} is an orthogonal decomposition, we actually have
\begin{align}\label{eq:heta}
H_{\eta}=\sum_{i,j=7}^{8}g^{ij}B_{\varphi}(\widetilde{V}_{i},\widetilde{V}_{j})+\sum_{i,j=9}^{10}g^{ij}B_{\varphi}(\widetilde{V}_{i},\widetilde{V}_{j})
+\sum_{i,j=11}^{12}g^{ij}B_{\varphi}(\widetilde{V}_{i},\widetilde{V}_{j})+\sum_{i,j=13}^{14}g^{ij}B_{\varphi}(\widetilde{V}_{i},\widetilde{V}_{j}).
\end{align}
We shall compute the first term. Recall that $\widetilde{\fa}_1$ is a subspace in $T_\eta \mathcal{O}_\eta$ spanned by 
$
\widetilde{V}_7=P_{15}-P_{37}
$
and 
$
\widetilde{V}_8=P_{26}+P_{37}.
$
 Since $P_{ij}$ is a basis of $\fp_{\epsilon_i-\epsilon_j}$ with $|P_{ij}|^2=2C$, we see that
\[
\begin{bmatrix}
g_{77} & g_{78}\\
g_{87} & g_{88}
\end{bmatrix}
=
\begin{bmatrix}
4C & -2C\\
-2C & 4C
\end{bmatrix}
\quad {\rm and}\quad 
\begin{bmatrix}
g^{77} & g^{78}\\
g^{87} & g^{88}
\end{bmatrix}
=
\begin{bmatrix}
\frac{1}{3C} & \frac{1}{6C}\\
 \frac{1}{6C} & \frac{1}{3C}
\end{bmatrix}.
\]
Moreover, since
$
\widetilde{\fa}_1\subset \fp_{S_1}
$
 and $S_1$ consists of strongly orthogonal roots,  Lemma \ref{lem:keyR} and the fact that $\alpha(\eta)=1$ for any $\alpha\in \Sigma_{>0}$ (Lemma \ref{prop:keyR} (ii)) show that 
 \begin{align*}
\sum_{i,j=7}^{8}g^{ij}B_{\varphi}(\widetilde{V}_{i},\widetilde{V}_{j})&=\sum_{i,j=7}^{8}g^{ij}\sum_{\alpha\in S_1}-\langle (\widetilde{V}_i)_\alpha, (\widetilde{V}_j)_\alpha\rangle \alpha^{\sharp}\\
&=-g^{77}\{|P_{15}|^2(\epsilon_1-\epsilon_5)^\sharp+|P_{37}|^2(\epsilon_3-\epsilon_7)^\sharp\}-2g^{78}\{-|P_{37}|^2(\epsilon_3-\epsilon_7)^\sharp\}\\
&\quad -g^{88}\{|P_{26}|^2(\epsilon_2-\epsilon_6)^\sharp+|P_{37}|^2(\epsilon_3-\epsilon_7)^\sharp\}\\
&=-\frac{2}{3}\{(\epsilon_1-\epsilon_5)^\sharp+(\epsilon_2-\epsilon_6)^\sharp+(\epsilon_3-\epsilon_7)^\sharp\}\\
&=-\frac{2}{3}\sum_{\alpha\in S_1}\alpha^\sharp.
\end{align*}
We obtain similar results for the latter three terms in \eqref{eq:heta}. Namely, we have
\begin{align*}
H_{\eta}=\sum_{i=1}^4-\frac{2}{3}\sum_{\alpha\in S_i}\alpha^\sharp=-\frac{2}{3}\sum_{\alpha\in \Sigma_{>0}}\alpha^\sharp=-\frac{2}{3}\Big(4\sum_{i=1}^3\epsilon_i-3\sum_{j=4}^7\epsilon_j\Big)^\sharp.
\end{align*}
On the other hand, by using \eqref{eq:eta} and \eqref{eq:simpr} with $C=14$, we see that the element $\eta$ defined by \eqref{def:eta} is expressed by 
\[
\eta=2\Big(4\sum_{i=1}^3\epsilon_i-3\sum_{j=4}^7\epsilon_j\Big)^\sharp.
\]
 Therefore, we obtain
\[
H_{\eta}=-\frac{1}{3}\eta=-\frac{{\rm dim}M}{|\eta|^2}\cdot \eta,
\]
since ${\rm dim} M={\rm dim} (G_2/SO(4))=8$ and $|\eta|^{2}=24$. This shows that $\widetilde{\Phi}:G_2/SO(4)\to S^{26}(|\eta|)$ is a minimal immersion.

Next, we show that the second fundamental form $B_\Phi$ of $\Phi:G_2/SO(4)\to \fp$ is isotropic. We consider at the point $\eta$ as usual since $\Phi$ is equivariant. Moreover, it is sufficient to show that $|B_\Phi(v,v)|$ is constant for any unit vector $\widetilde{v}\in \widetilde{\fa}_1$. Indeed, for a symmetric space $G/K$ with the associated Cartan decomposition $\fg=\fk\oplus\fm$, it is known that the ${\rm Ad}_G(K)$-action on $\fm$ is a polar action, and a maximal abelian subspace $\fa$ in $\fm$ becomes its section. Namely, any element $X\in \fm$ is expressed by $X={\rm Ad}(k)v$ for some $k\in K$ and $v\in \fa$.  Since $\widetilde{\fa}_1$ corresponds to a maximal abelian subspace $\fa_1$ in $\fm$ and we have an ${\rm Ad}_U(K)$-equivariant isomorphism between $\fm$ and $\widetilde{\fm}$, we see that any element $\widetilde{X}\in \widetilde{\fm}$ is expressed by $\widetilde{X}={\rm Ad}(k)\widetilde{v}$ for some $k\in K$ and $\widetilde{v}\in \widetilde{\fa}_1$. Moreover, because ${\rm Ad}(k)$ is an isometry on $\fp$, we have  
\[
|B_{\Phi}(\widetilde{X},\widetilde{X})|^2=|B_{\Phi}({\rm Ad}(k)\widetilde{v},{\rm Ad}(k)\widetilde{v})|^2=|{\rm Ad}(k)B_\Phi(\widetilde{v},\widetilde{v})|^2=|B_\Phi(\widetilde{v},\widetilde{v})|^2.
\]
Therefore, it is sufficient to consider elements in $\widetilde{\fa}_1$.

 Recall that $\widetilde{\fa}_1$ is a two-dimensional subspace in  $T_\eta\mathcal{O}_\eta$ which is spanned by $\widetilde{V}_7=P_{15}-P_{37}$ and $\widetilde{V}_8=P_{26}+P_{37}$, and the subset $S_1=\{\epsilon_1-\epsilon_5,\epsilon_2-\epsilon_6,\epsilon_3-\epsilon_7\}$ consists of strongly orthogonal roots. Moreover, by using the simple roots, we have
$
 \epsilon_{1}-\epsilon_{5}=\sum_{i=1}^{4}(\epsilon_{i}-\epsilon_{i+1}),
$
and hence, the relation \eqref{eq:simpr} shows that
\[
 |\epsilon_1-\epsilon_5|^{2}=\Big|\sum_{i=1}^{4}\frac{1}{C}H_{i}\Big|^{2}=\frac{1}{C^{2}}\sum_{i,j=1}^{4}\langle H_{i},H_{j}\rangle=\frac{2}{C},
\]
where we used the facts $|H_{i}|^{2}=2C$ and $\langle H_{i}, H_{i+1}\rangle=\langle H_{i-1}, H_{i}\rangle=-C$ (otherwise $\langle H_{i}, H_{j}\rangle=0$).
Similar computations show that
  \[
 |\epsilon_1-\epsilon_5|= |\epsilon_2-\epsilon_6|= |\epsilon_3-\epsilon_7|=\sqrt{\frac{2}{C}}. 
\]
Therefore, we can apply Lemma \ref{lem:keyR1} to obtain 
\begin{align}\label{eq:bsq}
|B_\Phi(v,v)|^2=|B_\varphi(v,v)|^2=\frac{1}{2}\cdot \frac{2}{C}=\frac{1}{C}
\end{align}
for any unit tangent vector $v\in \widetilde{\fa}_1$, where we used the fact that $\iota: G_2/SO(4)\to G_3(\R^7)$ is totally geodesic. This proves that $B_{\Phi}$ is isotropic.
\end{proof}

\subsection{Proof of Theorem \ref{thm:G}}
Recall that when $\widetilde{F}: M^{n}\to S^{n+k-1}(r)$ is a minimal immersion, Corollary \ref{cor:cri1} shows that $M$ has the property $({\bf N}_1)$ if $F: M^{n}\to \R^{n+k}$  satisfies the following inequality (see \eqref{eq:q2}):
\begin{align}\label{eq:des}
{\rm Ric}>\frac{1}{2}\Big(\frac{n}{r^{2}}-\beta_{F}\Big)g.
\end{align}

By Proposition \ref{prop:keyg} (i),  the symmetric space $M=G_{2}/SO(4)$ is minimally immersed into $S^{26}(|\eta|)$ via the map $\Phi: G_{2}/SO(4)\to \fp$. We shall show $F=\Phi$ satisfies the inequality \eqref{eq:des}.

\begin{proof}[Proof of Theorem \ref{thm:G}]

 We denote the induced metric on $M=G_2/SO(4)$ via the immersion $\Phi$ by $g$. Due to the construction of $\Phi$, it holds that  $g=c\cdot g_{K}$ for some positive constant $c$, where $g_K$ is the standard metric on the symmetric space $M$ defined by the Killing-form of $G_2$. We shall determine the constant $c$. 

First, we derive an explicit formula of the Killing-form on $\fg_2$.
We denote  the complexification of $\fg_{2}$ and $\fo(7)$ by $\fg_{2}^{\C}$ and $\fo(7,\C)$, respectively.  We write the Killing forms on  $\fo(7,\C)$ and $\fg_{2}^{\C}$ by ${\bf B}_{1}$ and ${\bf B}_{2}$, respectively. It is known that ${\bf B}_{1}(X,Y)=5{\rm tr}(XY)$ for $X, Y\in \fo(7,\C)$. 

As mentioned above, $\fg_{2}$ is realized as a subalgebra of $\fo(7)$. Thus, we regard $\fg_{2}^{\C}$ as a Lie subalgebra of $\fo(7,\C)$ and  set ${\bf B}_{2}':={\bf B}_{1}|_{\fg_{2}^{\C}}$. Since ${\bf B}_{1}$ is a non-degenerate symmetric $2$-form satisfying that ${\bf B}([X,Y],Z)+{\bf B}(Y,[X,Z])=0$, so does ${\bf B}_{2}'$ on $\fg_{2}^{\C}$. Thus,  Schur's lemma implies that ${\bf B}_{2}'$ coincides with a constant multiple of ${\bf B}_{2}$. In particular, we may write ${\bf B}_{2}(X,Y)=\lambda {\rm tr}(XY)$ for some constant $\lambda$, where $X,Y\in \fg_{2}^{\C}$ and we regard $X, Y$ as elements in $\fo(7,\C)$. By a direct computation by using the basis $\{V_{1},\ldots, V_{14}\}$ of $\fg_{2}$, we see that ${\bf B}_{2}(V_{1},V_{1}):={\rm tr}({\rm ad}(V_{1})^{2})=-16$, where ${\rm ad}(V_{1})^{2}: \fg_{2}\to \fg_{2}$ is an endomorphism on $\fg_{2}$. On the one hand, we have 
${\rm tr}(V_{1}^{2})=-4$ as $V_{1}\in \fo(7,\C)$.  Therefore $\lambda=(-16)/(-4)=4$, and hence, we obtain ${\bf B}_{2}(X,Y)=4{\rm tr}(XY)$ for any $X, Y\in \fg_{2}^\C\subset \fo(7,\C)$.  Moreover, the restriction ${\bf B}_2|_{\fg_2}$ yields the Killing-form on $\fg_2$.
  
Then, the inner product on $\fm\subset \fg=\fg_{2}$ defined by the Killing form ${\bf B}_{2}$ is expressed  by $\langle X, Y\rangle_{K}:=-{\bf B}_{2}(X,Y)=-4{\rm tr}(XY)$. By letting $X=Y=V_{7}=G_{15}-G_{37}\in \fg_{2}$, we see $|V_{7}|_{K}^{2}=16$. On the other hand, $V_{7}$ corresponds to $\widetilde{V}_{7}=P_{15}-P_{37}$ in $\widetilde{\fm}\subset T_{\eta}\mathcal{O}_{\eta}\subset \fp$, and the square norm with respect to the inner product on $\fp$ is given by $|\widetilde{V}_{7}|^{2}_{\fp}=4C$. Therefore, the constant $c$ is given by $c=(4C)/16=7/2$.

Since $g=c\cdot g_K$, the Ricci curvature ${\rm Ric}$ w.r.t. $g$ is the same as ${\rm Ric}_{K}$ w.r.t. $g_{K}$. Recall that we have ${\rm Ric}_{K}=\frac{1}{2}g_{K}$ if $M=G/K$ is a semi-simple compact Riemannian symmetric space. In particular, we have  ${\rm Ric}={\rm Ric}_{K}=\frac{1}{2}g_{K}=\frac{1}{2c} g=\frac{1}{7}g$. On the other hand, by Proposition \ref{prop:keyg}, $B_{\Phi}$ is isotropic and  we know that $\beta_{\Phi}=1/C=1/14$ by \eqref{eq:bsq}. Therefore, we see
\[
\frac{1}{2}\Big(\frac{n}{r^{2}}-\beta_{\Phi}\Big)=\frac{1}{2}\Big(\frac{8}{24}-\frac{1}{14}\Big)=\frac{11}{84}<\frac{12}{84}=\frac{1}{7}.
\]
Namely, the inequality \eqref{eq:des} is satisfied. Thus, $M=G_{2}/SO(4)$ has  property $({\bf N}_1)$ .
\end{proof}

\subsection{Remark on Ohnita's conjecture}

We remark that the above computational result also confirms Ohnita's conjecture given in \cite{Ohnita3}  for the case when $M=G_2/SO(4)$. 

In \cite{Hel2}, Helgason proved that, for any irreducible compact Riemannian symmetric space $M$, there exists a totally geodesic submanifold in $M$ whose sectional curvature is constant and the constant coincides with the maximum of the sectional curvature of $M$.  Moreover, whenever $M\neq \R P^n$, such a totally geodesic submanifold with maximal dimension is topologically sphere and is called the {\it Helgason sphere}. In \cite{Ohnita3}, Ohnita showed that the Helgason sphere is actually a stable minimal submanifold in $M$. Moreover, he conjectured that, if $M$ is a simply-connected irreducible compact Riemannian symmetric space, then there exists no stable rectifiable $p$-current in $M$ for $1\leq p\leq m-1$ or $n-m+1\leq p\leq n-1$, where $n={\rm dim}M$ and $m$ is the dimension of the Helgason sphere. Although a partial answer was given by himself \cite[Theorem 7.7]{Ohnita3},  this conjecture is still open.

When $M=G_{2}/SO(4)$,  the dimension of Helgason sphere is equal to $2$ (see \cite{Ohnita3}), and hence, there is a $2$-dimensional stable minimal submanifold in $G_{2}/SO(4)$. However, we obtain the following result, which confirms Ohnita's conjecture for $M=G_{2}/SO(4)$.

\begin{proposition}
There is no stable rectifiable 1-current (or $7$-current) in the compact symmetric space $G_{2}/SO(4)$. 
\end{proposition}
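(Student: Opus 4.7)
The plan is to apply the Lawson-Simons averaging method \cite{LS}, in the form extended to rectifiable currents by Ohnita \cite{Ohnita3}, using the very same isometric immersion $\Phi: G_{2}/SO(4)\to \fp\simeq \R^{27}$ constructed in Section \ref{sec:g2} and the two geometric features recorded in Proposition \ref{prop:keyg}: minimality of $\widetilde{\Phi}$ in $S^{26}(|\eta|)$ and isotropy of the second fundamental form $B_{\Phi}$. These properties are precisely what will make the Lawson-Simons trace collapse to a single negative pointwise constant, producing instability in the same spirit as the proof of Theorem \ref{thm:G}.

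First, I would recall the Lawson-Simons-Ohnita trace formula: for a rectifiable $p$-current $T$ in a Riemannian manifold $M^{n}$ isometrically immersed in $\R^{n+k}$ by a map $F$, and for parallel vector fields $v$ on $\R^{n+k}$ deforming $T$ through their tangential parts $v^{\top_{F}}$, the sum $\sum_{A}\delta^{2}\mathbf{M}(T)(v_{A}^{\top_{F}}, v_{A}^{\top_{F}})$ over an orthonormal basis $\{v_{A}\}$ of $\R^{n+k}$ equals $\int_{T}\mathcal{P}_{p}(\xi)\,d\mathbf{M}$ for an explicit pointwise function $\mathcal{P}_{p}$ of the tangent $p$-plane $\xi$, $B_{F}$, the mean curvature $H$, and the Ricci tensor of $M$. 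This is the continuous analogue of the discrete trace formula \eqref{eq:trf}.

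Specializing to $F=\Phi$ and $p=1$, for a unit tangent vector $\widetilde{T}$ of the 1-current, $\mathcal{P}_{1}$ reduces, exactly as in the $p=1$ case of \eqref{eq:trf}, to
$$\mathcal{P}_{1}(\widetilde{T})\;=\;Q_{\Phi}(\widetilde{T}, \widetilde{T})\,-\,|B_{\Phi}(\widetilde{T}, \widetilde{T})|^{2}.$$
By the computation performed in the proof of Theorem \ref{thm:G}, $Q_{\Phi}=\tfrac{1}{21}g$; by the isotropy of $B_{\Phi}$ (Proposition \ref{prop:keyg}(ii), see \eqref{eq:bsq}), $|B_{\Phi}(\widetilde{T}, \widetilde{T})|^{2}=\tfrac{1}{14}$ for every unit $\widetilde{T}$. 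Therefore $\mathcal{P}_{1}\equiv \tfrac{1}{21}-\tfrac{1}{14}=-\tfrac{1}{42}<0$ pointwise on the support of any nonzero rectifiable 1-current $T$, so the trace is strictly negative, forcing some $v_{A}$ to yield a negative second variation and contradicting the stability of $T$.

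For rectifiable 7-currents, the same trace formula applied at $p=n-1=7$, combined with the isotropy of $B_{\Phi}$ (which controls the cross-terms $\langle B_{\Phi}(e_{i}, e_{i}), B_{\Phi}(e_{\beta}, e_{\beta})\rangle$ and $|B_{\Phi}(e_{i}, e_{\beta})|^{2}$ appearing in $\mathcal{P}_{7}$ through its polarization identities) together with the Einstein condition ${\rm Ric}=\tfrac{1}{7}g$, again produces a negative pointwise constant for $\mathcal{P}_{7}(\xi)$, thereby ruling out stable 7-currents as well. The hard part will be writing down $\mathcal{P}_{p}$ in closed form and verifying the two reductions just claimed, especially for $p=7$, where the Lawson-Simons integrand is a priori a sum of mixed $B_{\Phi}$-terms indexed by an adapted frame; the isotropy of $B_{\Phi}$ is the essential ingredient that bypasses these complications and reduces the problem to the same pointwise inequality already established in the proof of Theorem \ref{thm:G}.
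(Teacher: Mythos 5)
Your proposal is, in substance, the same argument as the paper's: the paper proves this proposition in one line by applying \cite[Theorem 7.4]{Ohnita3} to the minimal immersion $\widetilde{\Phi}:G_2/SO(4)\to S^{26}(|\eta|)$, noting that the inequality $2{\rm Ric}-(n/r^2-\beta_{\Phi})g>0$ was already established in the proof of Theorem \ref{thm:G}; your proposal simply unpacks what that cited theorem does (the Lawson--Simons averaging over parallel vector fields of $\R^{27}$ restricted to $\Phi$). Your $p=1$ computation is correct and matches the paper's numbers: $\mathcal{P}_1(\widetilde{T})=Q_{\Phi}(\widetilde{T},\widetilde{T})-|B_{\Phi}(\widetilde{T},\widetilde{T})|^2=\tfrac{1}{21}-\tfrac{1}{14}=-\tfrac{1}{42}<0$, which is exactly the content of the inequality \eqref{eq:q2} verified in Section \ref{sec:g2}. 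The one genuine weak point is the $7$-current half, which you defer as ``the hard part'': it is in fact immediate. The Lawson--Simons integrand $\mathcal{P}_p(\xi)=\sum_{i\leq p<\beta}\bigl(2\|B_{\Phi}(e_i,e_\beta)\|^2-\langle B_{\Phi}(e_i,e_i),B_{\Phi}(e_\beta,e_\beta)\rangle\bigr)$ is manifestly symmetric under exchanging the $p$-plane $\xi$ with its orthogonal complement in $T_wM$, so for $n=8$ one has $\mathcal{P}_7(\xi)=\mathcal{P}_1(\xi^{\perp})=-\tfrac{1}{42}$ with no further work; no polarization identities or control of mixed $B_{\Phi}$-terms via isotropy is needed, and this duality is precisely why Ohnita's theorem treats $p=1$ and $p=n-1$ simultaneously. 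With that observation supplied, your proof is complete and coincides with the paper's.
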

\begin{proof}
Apply \cite[Theorem 7.4]{Ohnita3} to the minimal immersion $\widetilde{\Phi}:G_2/SO(4)\to S^{26}(|\eta|)$. Note that we have proved that $0<2{\rm Ric}-(n/r^2-\beta_{\Phi})g=2{\rm Ric}-((n-1)/r^2-\beta_{\widetilde{\Phi}})g$.
\end{proof}

\subsection*{Acknowledgements}
The author would like to thank Pak Tung Ho, Yoshihiro Ohnita and Ryokichi Tanaka for their interest and for helpful discussion. He also thanks to Yuuki Sasaki for letting me know about immersions of $G_2/SO(4)$. The author is supported by JSPS KAKENHI Grant Number JP18K13420 and 23K03122.

\end{document}